
\newif\ifdraft
\newcommand{\publishtype}{}         

\drafttrue
\draftfalse
\ifdraft
\else
  \renewcommand{\publishtype}{arxiv}
\fi

\makeatletter
\newcommand{\publishtype@arxiv}{arxiv}
\newcommand{\whenarxiv}[1]{\ifx\publishtype\publishtype@arxiv #1\fi}
\newcommand{\publishtype@check}{check}
\newcommand{\whencheck}[1]{\ifx\publishtype\publishtype@check #1\fi}
\newcommand{\publishtype@standard}{standard}
\newcommand{\whenstandard}[1]{\ifx\publishtype\publishtype@standard #1\fi}
\makeatother

\ifdraft
  \documentclass[dvipdfmx]{article}
\else
  \whenarxiv{\documentclass{article}}
  \whencheck{\documentclass[dvipdfmx,12pt]{article}}
  \whenstandard{\documentclass[dvipdfmx]{article}}
\fi

\makeatletter
\newcommand{\version@filename}{./\jobname.tex.version}
\newcommand{\version}{\IfFileExists{\version@filename}
  {version: \input{\version@filename}}
  {Error: version not found}}
\makeatother
\newif\ifusefancyhdr
\newif\ifusedvipdfmx
\ifdraft
  \usefancyhdrtrue
  \usedvipdfmxtrue
\else
  \whencheck{%
    \usefancyhdrtrue
    \usedvipdfmxtrue
  }
\fi
\ifusefancyhdr
  \usepackage{fancyhdr}
  \pagestyle{fancy}
  
  \usepackage{datetime}
  \lhead{\textcolor{gray}{\small\the\year/\the\month/\the\day\ \currenttime}}
  \rhead{\textcolor{gray}{\small\version}}
\fi

\usepackage{amsmath}            
\usepackage{amssymb}
\usepackage{amsthm}
\usepackage{mathtools}
\usepackage{enumitem}
\setenumerate{label={\rm(\arabic*)}}
\ifusedvipdfmx
  \usepackage[dvipdfmx]{hyperref}
\else
  \usepackage{hyperref}
\fi
\usepackage[capitalize]{cleveref} 
\usepackage{autonum}
\usepackage{tikz}
\usepackage{pxpgfmark}          
\usetikzlibrary{calc}
\usetikzlibrary{intersections}
\usetikzlibrary{positioning}
\usepackage{tikz-cd}

\usepackage[utf8]{inputenc}

\newtheoremstyle{mydescription}
  {}
  {}
  {}
  {}
  {}
  {}
  { }
  {(\thmnumber{#2})\hspace{0.2zw}}


\numberwithin{equation}{section}

\theoremstyle{definition}
\newtheorem{definition}[equation]{Definition}
\theoremstyle{plain}
\newtheorem{lemma}[equation]{Lemma}
\newtheorem{proposition}[equation]{Proposition}
\newtheorem{theorem}[equation]{Theorem}
\newtheorem{corollary}[equation]{Corollary}

\theoremstyle{remark}
\newtheorem{remark}[equation]{Remark}

\crefname{equation}{}{}
\crefname{enumi}{}{}
\crefname{subsection}{Subsection}{Subsections}

\usepackage{todonotes}
\presetkeys{todonotes}{color=orange!60}{}

\makeatletter
\let\todo@orig=\todo
\renewcommand{\todo}[1]{\todo@orig{#1}\relax}
\makeatother
\ifdraft
\else
  \presetkeys{todonotes}{disable}{}
\fi
\setlength{\marginparwidth}{3.5cm}
\makeatletter
\tikzstyle{inlinenotestyle} = [
    draw=\@todonotes@currentbordercolor,
    fill=\@todonotes@currentbackgroundcolor,
    line width=0.5pt,
    inner sep = 0.8 ex,
    rounded corners=4pt]

\renewcommand{\@todonotes@drawInlineNote}{%
        {\begin{tikzpicture}[remember picture,baseline=(current bounding box.base)]%
            \draw node[inlinenotestyle,font=\@todonotes@sizecommand, anchor=base,baseline]{%
              \if@todonotes@authorgiven%
                {\noindent \@todonotes@sizecommand \@todonotes@author:\,\@todonotes@text}%
              \else%
                {\noindent \@todonotes@sizecommand \@todonotes@text}%
              \fi};%
           \end{tikzpicture}}}%
\newcommand{\mytodo}[1]{\@todo[inline]{#1}}%
\makeatother

\ifdraft
  \usepackage[color]{showkeys}
  \makeatletter
  \SK@def\cref#1{\SK@\SK@@ref{#1}\SK@cref{#1}} 
  \makeatother
\else
  \whencheck{
    \usepackage[color,notref,notcite]{showkeys}
  }
\fi
\definecolor{refkey}{rgb}{0.7, 0.8, 0.5}
\definecolor{labelkey}{rgb}{0, 0.7, 0.5}


\newcommand{\id}{{\rm id}}
\newcommand{\pr}{\mathrm{pr}}
\newcommand{\ev}{\mathrm{ev}}
\newcommand{\res}{\mathrm{res}}
\renewcommand{\deg}[1]{\mathopen|#1\mathclose|}
\newcommand{\K}{\mathbb K}

\newcommand{\Z}{\mathbb Z}

\newcommand{\cochain}[2][*]{C^{#1}(#2)}
\newcommand{\cohom}[2][*]{H^{#1}(#2)}
\newcommand{\homol}[2][*]{H_{#1}(#2)}

\renewcommand{\ker}{\operatorname{Ker}}
\renewcommand{\hom}{\operatorname{Hom}}
\newcommand{\ext}{\operatorname{Ext}}
\newcommand{\tor}{\operatorname{Tor}}
\renewcommand{\lim}{{\varprojlim}}

\newcommand{\spanv}{\operatorname{span}}
\newcommand{\exttohom}[1]{\cohom{#1}}
\newcommand{\extlift}[1]{{#1}^\star} 

\newcommand{\evenpart}[1]{#1^{\rm even}}
\newcommand{\oddpart}[1]{#1^{\rm odd}}

\newcommand{\shriek}[1]{{#1}_!}
\newcommand{\incl}{\mathrm{incl}}
\newcommand{\comp}{\mathrm{comp}}
\newcommand{\diag}{\Delta}
\newcommand{\inclconst}{c}

\newcommand{\qis}{\simeq}
\newcommand{\tpow}[1]{^{\otimes #1}}
\newcommand{\susp}[2][]{s^{#1}#2}

\newcommand{\dual}{\vee}        

\newcommand{\dcop}{\delta^\dual}
\newcommand{\dcopnew}{\delta^\dual_{\mathrm{ns}}}
\DeclareMathOperator{\map}{Map}

\newcommand{\spheresp}[3][]{S_{#1}^{#2}#3}
\newcommand{\disksp}[2]{D^{#1}#2}
\newcommand{\mapsp}[3][]{{#3}^{#2}_{#1}}
\newcommand{\htpyset}[2]{[#1, #2]}

\newcommand{\orirev}{\tau}

\DeclareMathOperator{\chara}{ch}

\title{
  New construction of the brane coproduct
  and vanishing of cup products on sphere spaces
}
\author{Shun Wakatsuki}
\date{}

\begin{document}
\maketitle
\ifusefancyhdr
  \thispagestyle{fancy}
\fi
\begin{abstract}
  Using the loop coproduct,
  Menichi proved that the cup product with the orientation class vanishes
  for a closed connected oriented manifold with non-trivial Euler characteristic.
  We generalize this to the sphere spaces,
  i.e.\ the mapping spaces from spheres,
  using two generalizations of the loop coproduct to sphere spaces.
  One is constructed in this paper
  and the other in a previous paper of the author.
\end{abstract}

\ifdraft
  \listoftodos
\fi

\section{Introduction}
In this article,
we give a new construction of the brane coproduct,
which we call the \textit{non-symmetric} brane coproduct.
Comparing this coproduct with another coproduct constructed in \cite{wakatsuki18:toappear},
we prove the vanishing of some cup products on the cohomology of mapping spaces from spheres.

Chas and Sullivan \cite{chas-sullivan} introduced the loop product
on the homology \(\homol{LM}\) of the free loop space \(LM=\map(S^1, M)\)
of a manifold \(M\) of dimension \(m\).
Cohen and Godin \cite{cohen-godin} extended this product to other string operations,
including the loop coproduct,
whose dual has the form
\begin{equation}
  \dcop\colon \cohom{LM} \to \cohom[*+m]{LM\times LM}.
\end{equation}
Although the loop coproduct is almost trivial by \cite{tamanoi},
Menichi \cite{menichi13} used the loop coproduct to obtain the following vanishing result.
\begin{theorem}[{\cite[Theorem 1]{menichi13}}]
  \label{theorem:menichiVanishing}
  Let \(M\) be a connected, closed oriented manifold of dimension \(m\),
  \(\omega \in \cohom[m]{M}\) its orientation class, and
  \(\chi(M)\) its Euler characteristic.
  Then, for any \(\alpha \in \cohom[>0]{LM}\), we have
  \begin{equation}
    \chi(M)\ev_0^*\omega\cdot\alpha = 0 \in \cohom[\deg{\alpha}+m]{LM},
  \end{equation}
  where
  \begin{math}
    \ev_0\colon LM \to M
  \end{math}
  is the evaluation map at the base point \(0\in S^1\).
\end{theorem}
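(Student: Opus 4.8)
\medskip
\noindent\textit{Proof strategy.}\ The plan is to follow Menichi's argument, combining a geometric computation of the dual loop coproduct $\dcop\colon\cohom{LM}\to\cohom[*+m]{LM\times LM}$ with the near-triviality of the loop coproduct of \cite{tamanoi}. Concretely, realize $\dcop$ as the cohomological version $\dcop=\shriek{\incl}\circ\comp^{*}$ of the correspondence $LM\xleftarrow{\comp}LM\times_{M}LM\xrightarrow{\incl}LM\times LM$ defining the loop coproduct, where $\comp$ is concatenation of loops at the base point $0\in S^{1}$ and $\incl$ is the inclusion; the latter is normally nonsingular of codimension $m$, with normal bundle the pullback $q^{*}TM$ of $TM$ along the common-base-point map $q\colon LM\times_{M}LM\to M$, $q(\gamma_{1},\gamma_{2})=\gamma_{1}(0)=\gamma_{2}(0)$. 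In particular $\dcop(1)=\shriek{\incl}(1)=(\ev_{0}\times\ev_{0})^{*}[\diag_{M}]$ is the pullback of the diagonal class of $M$. I will freely use the self-intersection identity $\incl^{*}\circ\shriek{\incl}=q^{*}e(TM)\cup(-)$ and the Poincar\'e--Hopf identity $e(TM)=\chi(M)\,\omega\in\cohom[m]{M}$.

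The first, geometric step introduces the auxiliary map $d\colon LM\to LM\times LM$, $d(\gamma)=(\gamma,c_{\gamma(0)})$, pairing $\gamma$ with the constant loop at its base point. It factors as $d=\incl\circ\tilde{d}$ with $\tilde{d}(\gamma)=(\gamma,c_{\gamma(0)})\in LM\times_{M}LM$, and one checks the identities $q\circ\tilde{d}=\ev_{0}$, $\pr_{1}\circ d=\id_{LM}$, $\pr_{2}\circ d=j\circ\ev_{0}$ (with $j\colon M\hookrightarrow LM$ the inclusion of constant loops), and, crucially, $\comp\circ\tilde{d}\simeq\id_{LM}$ (reparametrize $\gamma$ followed by a trivial arc back to $\gamma$). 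Together with the self-intersection identity these give, for every $\alpha\in\cohom{LM}$,
\begin{align*}
  d^{*}\dcop(\alpha)
  &=\tilde{d}^{*}\bigl(\incl^{*}\shriek{\incl}(\comp^{*}\alpha)\bigr)
   =\tilde{d}^{*}\bigl(q^{*}e(TM)\cup\comp^{*}\alpha\bigr)\\
  &=(q\circ\tilde{d})^{*}e(TM)\cup(\comp\circ\tilde{d})^{*}\alpha
   =\chi(M)\,\ev_{0}^{*}\omega\cdot\alpha,
\end{align*}
and in particular $d^{*}\dcop(1)=\chi(M)\,\ev_{0}^{*}\omega$.

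The second step feeds in the near-triviality of the loop coproduct from \cite{tamanoi}, in the form of the Frobenius-type identity
\[
  \dcop(\alpha)=\dcop(1)\cup\pr_{2}^{*}\alpha\qquad\text{for all }\alpha\in\cohom{LM}.
\]
This holds automatically for $\alpha$ pulled back from $M$ along $\ev_{0}$, since the diagonal class $[\diag_{M}]$ is supported on the diagonal; the content of \cite{tamanoi} is that the Thom--Pontryagin collapse defining $\dcop$ remembers only the base point of the loop, which forces the identity for all $\alpha$. Granting it and applying $d^{*}$, with $\pr_{2}\circ d=j\circ\ev_{0}$ and the previous computation,
\begin{align*}
  \chi(M)\,\ev_{0}^{*}\omega\cdot\alpha
  =d^{*}\dcop(\alpha)
  &=d^{*}\dcop(1)\cup(j\circ\ev_{0})^{*}\alpha\\
  &=\chi(M)\,\ev_{0}^{*}\omega\cup\ev_{0}^{*}j^{*}\alpha
   =\chi(M)\,\ev_{0}^{*}\bigl(\omega\cup j^{*}\alpha\bigr).
\end{align*}
For $\alpha\in\cohom[>0]{LM}$ one has $j^{*}\alpha\in\cohom[>0]{M}$, hence $\omega\cup j^{*}\alpha\in\cohom[>m]{M}=0$, so $\chi(M)\,\ev_{0}^{*}\omega\cdot\alpha=0$.

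The hard part is the Frobenius-type identity $\dcop(\alpha)=\dcop(1)\cup\pr_{2}^{*}\alpha$, which is exactly Tamanoi's triviality theorem for the loop coproduct; without it one recovers only the tautology $d^{*}\dcop(\alpha)=\chi(M)\,\ev_{0}^{*}\omega\cdot\alpha$. A secondary, routine difficulty is the standard one of making the Gysin map $\shriek{\incl}$ and the self-intersection identity rigorous on the infinite-dimensional loop spaces (via an embedding of $M$ into a Euclidean space and tubular neighbourhoods, essentially the construction set up in this paper); the homotopy $\comp\circ\tilde{d}\simeq\id_{LM}$, the excess-intersection bookkeeping and Poincar\'e--Hopf are all routine. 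For the sphere-space generalizations proved below, the appeal to \cite{tamanoi} will be replaced by a direct comparison of the non-symmetric brane coproduct constructed here with the one of \cite{wakatsuki18:toappear}.
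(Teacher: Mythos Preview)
Your map \(\dcop=\shriek{\incl}\circ\comp^{*}\colon\cohom{LM}\to\cohom[*+m]{LM\times LM}\) is the cohomological dual of the loop \emph{product}, not of the loop \emph{coproduct}; the dual of the coproduct runs the other way, \(\shriek{\comp}\circ\incl^{*}\colon\cohom{LM\times LM}\to\cohom[*+m]{LM}\), with the Gysin placed on \(\comp\) rather than on \(\incl\). (The display in the Introduction has the arrows reversed; compare the actual definitions in \cref{section:reviewPreviousBraneCoproduct,section:newCoprodConstruction}.) This matters because Tamanoi's triviality theorem is a statement about the loop \emph{coproduct}, so it does not give the ``Frobenius-type identity'' \(\dcop(\alpha)=\dcop(1)\cup\pr_{2}^{*}\alpha\) for your \(\dcop\). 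That identity is equivalent, via the projection formula, to \(\shriek{\incl}\bigl((\comp^{*}-\pr_{2}^{*})\alpha\bigr)=0\), i.e.\ to the vanishing after \(\shriek{\incl}\) of the difference between concatenation and second projection; nothing in \cite{tamanoi} yields this, and there is no reason to expect it in general. Your first step is fine and indeed recovers the tautology \(d^{*}\dcop(\alpha)=\chi(M)\,\ev_{0}^{*}\omega\cdot\alpha\), but the step that breaks the tautology is unsupported.

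The paper does not invoke \cite{tamanoi} at all. Its mechanism is a comparison of two \emph{dual coproducts} \(\cohom{LM\times LM}\to\cohom[*+m]{LM}\): the non-symmetric one \(\dcopnew\), for which the explicit formula \(\dcopnew(u\times v)=\ev_{0}^{*}(\omega\cdot\inclconst^{*}u)\cdot v\) is established in \cref{theorem:computeNewCoprod}, and the coproduct \(\dcop_{\gamma}\) coming from an Ext class \(\gamma\), which is graded-commutative by \cref{proposition:commutativityOfPreviousCoproduct}. These are related by \(\dcop_{\gamma}=\lambda_{\gamma}\dcopnew\) (\cref{proposition:compareShriek}), and for \(k=1\) one takes \(\gamma=\shriek{\Delta}\) and reads off \(\lambda_{\gamma}=\chi(M)\) from the diagonal class (\cref{proposition:diagonalClass}). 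The vanishing then follows by computing \(\dcopnew(1\times\alpha)=\ev_{0}^{*}\omega\cdot\alpha\) and \(\dcopnew(\alpha\times 1)=0\) for \(\alpha\in\ker\inclconst^{*}\), and using commutativity of \(\dcop_{\gamma}\) to equate \(\chi(M)\) times these two expressions. So the Euler characteristic enters through \(\Delta^{*}\) of the diagonal class, and the symmetry input replacing your appeal to Tamanoi is the graded-commutativity of \(\dcop_{\gamma}\).
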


Moreover, Félix and Thomas \cite{felix-thomas09} generalized
the loop coproduct to Gorenstein spaces.
A Gorenstein space is a generalization of
a Poincaré duality space (i.e.\ a space satisfying Poincaré duality)
in an algebraic way.
See \cref{definition:Gorenstein} for the definition.

Using the algebraic method due to Félix and Thomas,
the author \cite{wakatsuki18:toappear} constructed
a generalization of the loop coproduct,
called the \textit{brane coproduct}.
Here we explain it along with a little generalization.
Let \(\K\) be a field,
\(k\) a positive integer and
\(M\) a \(k\)-connected space with \(\cohom{M} = \cohom{M;\K}\) of finite type.
Denote by \(\spheresp{k}{M} = \map(S^k, M)\)
the mapping space from the \(k\)-dimensional sphere to \(M\).
We fix an arbitrary element
\newcommand{\myshriek}{\gamma}
\begin{math}
  \myshriek\in
  \ext^{l}_{\cochain{\spheresp{k-1}{M}}}(\cochain{M}, \cochain{\spheresp{k-1}{M}}),
\end{math}
where \(\cochain{M}\) is the singular cochain algebra on \(M\).
Then we can construct (the dual of) the brane coproduct
\begin{equation}
  \dcop_\myshriek\colon \cohom{\spheresp{k}{M}\times\spheresp{k}{M}}
  \to \cohom[*+l]{\spheresp{k}{M}}.
\end{equation}
Note that \(\myshriek\) will be specified under some assumption on \(M\),
and that we can choose \(l\) and \(\myshriek\) depending on the purpose.
See \cref{section:reviewPreviousBraneCoproduct} for details.

Next we explain the \textit{non-symmetric} brane coproduct,
which will be defined in this article.
Assume \(M\) is a Poincaré duality space (i.e.\ a space satisfying Poincaré duality)
over \(\K\) of dimension \(m\).
Then we can define the non-symmetric brane coproduct
\begin{equation}
  \dcopnew\colon \cohom{\spheresp{k}{M}\times\spheresp{k}{M}}
  \to \cohom[*+m]{\spheresp{k}{M}}.
\end{equation}
Note that the non-symmetric brane coproduct can be defined
for any 1-connected Poincaré duality space,
without the assumption of \(k\)-connectivity.
See \cref{section:newCoprodConstruction} for details.

The non-symmetric brane coproduct \(\dcopnew\)
seems to be \textit{non-commutative},
from the explicit formula in \cref{theorem:computeNewCoprod}.
On the other hand,
the brane coproduct \(\dcop_\myshriek\) is \textit{commutative} in the sense of
\cref{proposition:commutativityOfPreviousCoproduct}.
In spite of such difference,
these coproducts coincide with each other under some assumptions.
This coincidence gives some non-trivial relations on \(\cohom{\spheresp{k}{M}}\),
which is the main theorem of this article:

\begin{theorem}
  \label{theorem:vanishingMainTheorem}
  Let \(k\) be a positive integer,
  \(M\) a \(k\)-connected Poincaré duality space over \(\K\) of dimension \(m\), and
  \(\omega\in\cohom[m]{M}\) its orientation class.
  Assume
  \begin{enumerate}
    \item \label{item:assump_loop} \(k=1\) or
    \item \label{item:assump_brane}
      \(k\geq 1\) is odd,
      the characteristic of \(\K\) is zero, and
      \(\dim_\K\left(\bigoplus_n\pi_n(M)\otimes\K\right) < \infty\).
      \todo{\(\Omega^{k-1}M\) Gorenstein にも触れる？}
  \end{enumerate}

  Then, for any \(\alpha\in\cohom[>0]{\spheresp{k}{M}}\), we have
  \begin{equation}
    \chi(M)\ev_0^*\omega\cdot\alpha = 0 \in \cohom[\deg{\alpha}+m]{\spheresp{k}{M}}.
  \end{equation}
\end{theorem}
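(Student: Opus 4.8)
The plan is to derive the identity by confronting the commutative brane coproduct $\dcop_\gamma$ with the non-symmetric one $\dcopnew$. We may assume $\chi(M)\neq 0$, since otherwise the assertion is trivial. The engine is a \emph{coincidence}: under assumption \ref{item:assump_loop} or \ref{item:assump_brane}, for a suitable umkehr class $\gamma\in\ext^{m}_{\cochain{\spheresp{k-1}{M}}}(\cochain{M},\cochain{\spheresp{k-1}{M}})$ --- i.e.\ with $l=m$ in the construction of \cref{section:reviewPreviousBraneCoproduct} --- one has $\dcop_\gamma=\dcopnew$ as maps $\cohom{\spheresp{k}{M}\times\spheresp{k}{M}}\to\cohom[*+m]{\spheresp{k}{M}}$. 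Granting this, \cref{proposition:commutativityOfPreviousCoproduct} shows that $\dcopnew=\dcop_\gamma$ is graded commutative, $\dcopnew(a\times b)=(-1)^{\deg{a}\deg{b}}\dcopnew(b\times a)$; it is this externally imposed symmetry, played against the \emph{manifestly non-symmetric} formula of \cref{theorem:computeNewCoprod}, that forces the vanishing.

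To prove the coincidence, first take $k=1$ (case \ref{item:assump_loop}): here $\spheresp{0}{M}=M\times M$ and the constant-map inclusion $c\colon M\to M\times M$ is the diagonal $\diag_M$, so $\dcop_\gamma$ is the dual loop coproduct, while $\dcopnew$ is, essentially by its construction, the same operation assembled from Poincaré duality of $M$ along $\diag_M$; moreover $\ext^{m}_{\cochain{M\times M}}(\cochain{M},\cochain{M\times M})$ is one-dimensional (identified with $\cohom[0]{M}=\K$), so the relevant $\gamma$ is unique up to a scalar and the comparison is formal. For $k\geq 3$ (case \ref{item:assump_brane} with $k>1$), the comparison is the technical heart: I would pass to the minimal model of $\spheresp{k-1}{M}$, and, using that $M$ is elliptic with $\chi(M)\neq 0$ --- so that $M$ has equally many rational homotopy generators in even and in odd degrees --- together with $k-1$ being even and $\operatorname{char}\K=0$, show that $\Omega^{k-1}M$ is a Gorenstein space and that $\ext^{m}_{\cochain{\spheresp{k-1}{M}}}(\cochain{M},\cochain{\spheresp{k-1}{M}})$ contains a distinguished line on which the comparison takes place; one then pins down the comparison scalar --- so that $\dcop_\gamma=\dcopnew$ holds literally --- by evaluating both coproducts on the constant maps $M\subset\spheresp{k}{M}$, or by naturality with respect to $\ev_0$. (Here $k$-connectivity of $M$ is used only to ensure that $\dcop_\gamma$ is defined at all.)

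Granting the coincidence, and hence commutativity, I apply the explicit formula of \cref{theorem:computeNewCoprod} to compute $\dcopnew(1\times\alpha)$ and $\dcopnew(\alpha\times 1)$ for $\alpha\in\cohom[>0]{\spheresp{k}{M}}$. That formula inserts a pulled-back diagonal class of $M$ asymmetrically into the two factors, so a priori these two values differ; simplifying them by means of the self-intersection identity $\diag_M^*[\diag_M]=\chi(M)\,\omega$ and the fact that $\deg{\alpha}>0$ (which kills the unit/augmentation contributions), one reads off that $\dcopnew(1\times\alpha)=\pm\,\chi(M)\,\ev_0^*\omega\cdot\alpha$ whereas $\dcopnew(\alpha\times 1)=0$. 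The commutativity relation of the first paragraph (with the trivial sign $(-1)^{0\cdot\deg{\alpha}}=1$) now forces $\chi(M)\,\ev_0^*\omega\cdot\alpha=0$ in $\cohom[\deg{\alpha}+m]{\spheresp{k}{M}}$, which is the assertion; for $k=1$ this recovers \cref{theorem:menichiVanishing}.

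The main obstacle is the coincidence in case \ref{item:assump_brane}: carrying out the minimal-model analysis of $\ext^{*}_{\cochain{\spheresp{k-1}{M}}}(\cochain{M},\cochain{\spheresp{k-1}{M}})$, establishing the Gorenstein property of $\Omega^{k-1}M$, locating the distinguished umkehr class in degree $m$, and fixing the comparison scalar --- it is precisely here that $k$ odd, $\operatorname{char}\K=0$ and finiteness of $\dim_\K\left(\pirat\right)$ are used. After that, the extraction step is largely bookkeeping: one tracks the degree shift by $m$, the graded-commutativity of $\cohom{\spheresp{k}{M}}$, and the Koszul signs in the formula of \cref{theorem:computeNewCoprod}, so as to confirm that the obstruction to symmetry is exactly $\chi(M)\,\ev_0^*\omega\cdot\alpha$ and that the correction terms become invisible once $\deg{\alpha}>0$.
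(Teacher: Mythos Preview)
Your overall architecture is the paper's: confront $\dcop_\gamma$ with $\dcopnew$, use the commutativity of the former against the explicit formula for the latter, and extract the vanishing. But you have misplaced where $\chi(M)$ enters, and this is not cosmetic. The formula of \cref{theorem:computeNewCoprod} does \emph{not} involve a diagonal class; it reads $\dcopnew(u\times v)=\ev_0^*(\omega\cdot\inclconst^*u)\cdot v$ with $\omega$ the orientation class, so $\dcopnew(1\times\alpha)=\ev_0^*\omega\cdot\alpha$ with \emph{no} factor $\chi(M)$, and the self-intersection identity $\diag^*[\diag]=\chi(M)\omega$ plays no role in evaluating $\dcopnew$. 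Consequently your proposed normalization ``$\dcop_\gamma=\dcopnew$ literally'' would yield only $\ev_0^*\omega\cdot\alpha=0$, and the Euler characteristic would never appear.

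In the paper the $\chi(M)$ lives entirely in the comparison scalar: one has $\dcop_\gamma=\lambda_\gamma\,\dcopnew$ (\cref{proposition:compareShriek}), where $\lambda_\gamma\in\K$ is defined by $\inclconst^*\circ(\exttohom{\gamma})(1)=\lambda_\gamma\omega$. This scalar is not something one freely sets to $1$; the actual work in both cases is to \emph{produce} a $\gamma$ with $\lambda_\gamma=\chi(M)$. For $k=1$ this is \cref{proposition:diagonalClass} (and here the diagonal-class identity you quote is used, but to compute $\lambda_{\shriek\diag}$, not $\dcopnew$); for odd $k\geq 3$ it is \cref{proposition:shriekInclconst_EulerChar}, established by the explicit Sullivan-model construction of \cref{section:modelOfShriek}. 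The chain is then
\[
\lambda_\gamma\,\dcopnew(\alpha\times 1)=\dcop_\gamma(\alpha\times 1)=\pm\,\dcop_{\tau^*\gamma}(1\times\alpha)=\pm\,\lambda_{\tau^*\gamma}\,\dcopnew(1\times\alpha),
\]
together with $\lambda_\gamma=\lambda_{\tau^*\gamma}$. Note also the commutativity statement is $\dcop_\gamma(a\times b)=\pm\dcop_{\tau^*\gamma}(b\times a)$ (\cref{proposition:commutativityOfPreviousCoproduct}), not $\pm\dcop_\gamma(b\times a)$; the orientation-reversal twist $\tau^*$ must be handled, which your sketch omits.
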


\begin{remark}
  This theorem generalizes \cref{theorem:menichiVanishing} due to Menichi,
  since we do not assume
  that \(M\) is a manifold and \(k = 1\).
  See \cref{remark:assumpIsNatural} for the reason why we need the assumption \(k\) is odd.
\end{remark}

We prove the above theorem
using the following general result.

\begin{theorem}
  \label{theorem:vanishingGeneral}
  Let \(M\) be a \(k\)-connected Poincaré duality space over \(\K\) of dimension \(m\),
  \(\omega \in \cohom[m]{M}\) its orientation class.
  We fix an arbitrary element
  \begin{equation}
    \myshriek \in
    \ext^{m}_{\cochain{\spheresp{k-1}{M}}}(\cochain{M}, \cochain{\spheresp{k-1}{M}}).
  \end{equation}
  Define \(\lambda_\myshriek\in \K\) by the equation
  \begin{math}
    \inclconst^*\circ(\exttohom{\myshriek})(1) = \lambda_\myshriek \omega
    \in \cohom[m]{M},
  \end{math}
  where \(\inclconst\colon M\to\spheresp{k-1}{M}\) is the embedding as constant maps.
  See \cref{section:extBasic} for the definition of the map
  \begin{math}
    \exttohom{\myshriek}\colon
    \cohom{M}\to\cohom{\spheresp{k-1}{M}}.
  \end{math}
  Then, for any \(\alpha\in\cohom[>0]{\spheresp{k}{M}}\), we have
  \begin{equation}
    \lambda_\myshriek\ev_0^*\omega\cdot\alpha = 0 \in \cohom[\deg{\alpha}+m]{\spheresp{k}{M}}.
  \end{equation}
\end{theorem}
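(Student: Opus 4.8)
The plan is to run Menichi's argument for \cref{theorem:menichiVanishing}, using the brane coproduct $\dcop_\myshriek$ of \cref{section:reviewPreviousBraneCoproduct} and its commutativity (\cref{proposition:commutativityOfPreviousCoproduct}) as the main lever. A $k$-connected space with $k\ge 1$ is $1$-connected, so in the situation of the theorem both $\dcop_\myshriek\colon\cohom{\spheresp{k}{M}\times\spheresp{k}{M}}\to\cohom[*+m]{\spheresp{k}{M}}$ (for the given $\myshriek\in\ext^{m}$) and the non-symmetric brane coproduct $\dcopnew$ of \cref{section:newCoprodConstruction} are defined; they have the same source and target, and will be compared via \cref{theorem:computeNewCoprod}.

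The first, and most concrete, step is to compute the unit value $\dcop_\myshriek(1\times 1)\in\cohom[m]{\spheresp{k}{M}}$. By the construction in \cref{section:reviewPreviousBraneCoproduct}, the subspace of $\spheresp{k}{M}\times\spheresp{k}{M}$ involved embeds into $\spheresp{k}{M}$ as the (homotopy) pullback of $\inclconst\colon M\to\spheresp{k-1}{M}$ along the restriction map $\res\colon\spheresp{k}{M}\to\spheresp{k-1}{M}$, so that $\dcop_\myshriek(1\times 1)=\res^*\bigl((\exttohom{\myshriek})(1)\bigr)$. The key geometric observation is that the equatorial inclusion $S^{k-1}\hookrightarrow S^k$ bounds a hemisphere, hence is null-homotopic; therefore $\res\simeq\inclconst\circ\ev_0$, and
\[
  \dcop_\myshriek(1\times 1)=\res^*\bigl((\exttohom{\myshriek})(1)\bigr)=\ev_0^*\inclconst^*\bigl((\exttohom{\myshriek})(1)\bigr)=\lambda_\myshriek\,\ev_0^*\omega,
\]
the last step being the definition of $\lambda_\myshriek$. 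Thus the assertion of the theorem is precisely $\dcop_\myshriek(1\times 1)\cup\alpha=0$.

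To prove this, I would first note that $\dcop_\myshriek$ is a morphism of modules over the subalgebra $\ev_0^*\cohom{M}\subseteq\cohom{\spheresp{k}{M}}$ in each of its two slots --- any class pulled back from $M$ along $\ev_0$ restricts in the same way along each structure map of the pinch, since the base point may be taken on the pinch locus and all evaluations into $M$ are homotopic --- so that, for instance, $\dcop_\myshriek(\ev_0^*x\times\beta)=\pm\ev_0^*x\cup\dcop_\myshriek(1\times\beta)$. The remaining input is that one of the two partial evaluations $\dcop_\myshriek(1\times\alpha)$, $\dcop_\myshriek(\alpha\times 1)$ carries no $\ev_0^*\omega$-contribution in positive degrees: this I would extract from the explicit formula of \cref{theorem:computeNewCoprod} for $\dcopnew$, via the comparison between $\dcopnew$ and $\dcop_\myshriek$. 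Commutativity of $\dcop_\myshriek$ then identifies the other partial evaluation with $\pm\dcop_\myshriek(1\times 1)\cup\alpha=\pm\lambda_\myshriek\,\ev_0^*\omega\cup\alpha$, and comparing the two yields $\lambda_\myshriek\,\ev_0^*\omega\cup\alpha=0$ for every $\alpha\in\cohom[>0]{\spheresp{k}{M}}$.

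The hard part is this comparison: relating the $\myshriek$-twisted coproduct $\dcop_\myshriek$ --- for an \emph{arbitrary} Ext class, not only the distinguished Poincaré-duality class underlying $\dcopnew$ --- to the explicit model of \cref{theorem:computeNewCoprod}, and keeping careful track of which of the two ``opposite'' asymmetries is responsible for the vanishing, together with all Koszul signs and the precise module-linearity used in the previous paragraph. The unit-value computation above, by contrast, is essentially formal once the constructions from \cref{section:reviewPreviousBraneCoproduct,section:newCoprodConstruction} are in place.
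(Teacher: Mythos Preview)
Your plan is essentially the paper's own argument: the crux is the comparison $\dcop_\myshriek=\lambda_\myshriek\,\dcopnew$ (this is \cref{proposition:compareShriek}, proved exactly by the Ext-lift computation you sketch for the unit value, but carried out one square lower in \cref{equation:relationDiagram} so that it pins down the full Ext class, not just its value at $1$), after which \cref{theorem:computeNewCoprod} gives $\dcopnew(\alpha\times 1)=\ev_0^*(\omega\cdot\inclconst^*\alpha)=0$ and $\dcopnew(1\times\alpha)=\ev_0^*\omega\cdot\alpha$ for $\deg\alpha>0$, and commutativity finishes.

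Two points to tighten. First, your separate computation of $\dcop_\myshriek(1\times 1)$ and the $\ev_0^*\cohom{M}$--module linearity are detours: once you have the full comparison $\dcop_\myshriek=\lambda_\myshriek\,\dcopnew$, both partial evaluations come straight from \cref{theorem:computeNewCoprod}, and there is no need to factor $\dcop_\myshriek(1\times\alpha)$ as $\dcop_\myshriek(1\times 1)\cup\alpha$ via any linearity argument. Second, the commutativity you invoke (\cref{proposition:commutativityOfPreviousCoproduct}) does \emph{not} relate $\dcop_\myshriek$ to itself but to $\dcop_{\orirev^*\myshriek}$, where $\orirev$ is the orientation-reversal on $S^{k-1}$; the paper's proof therefore also uses the easy observation $\lambda_{\orirev^*\myshriek}=\lambda_\myshriek$ (\cref{equation:orirevNotAffect}), which you should record explicitly.
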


We conjecture that,
for any \(M\) and \(k\) as in \cref{theorem:vanishingGeneral},
there is an element \(\myshriek\) satisfying \(\lambda_\myshriek = \chi(M)\).
The assumptions \cref{item:assump_loop} and \cref{item:assump_brane}
give sufficient conditions for the existence of such \(\myshriek\).

Throughout this article, \(\K\) denotes a field.
The characteristic \(\chara\K\) of the field \(\K\) is zero in
\cref{subsection:comparison_higher} and
\cref{section:modelOfShriek}.
In other (sub)sections,
\(\chara\K\) can be zero or any prime.
For a vector space \(V\) over \(\K\),
we denote the dual of \(V\) by \(V^\dual\).
For spaces \(X\) and \(Y\),
we denote the mapping space from \(X\) to \(Y\) by \(\mapsp{X}{Y}\).
For \(x\in X\), let \(\ev_x\colon \mapsp{X}{Y} \to Y\) be the evaluation map at \(x\).
Denote by \(\htpyset{X}{Y}\) the homotopy set of maps from \(X\) to \(Y\).
Base points does not matter since we consider it
only when \(X\) is 0-connected and \(Y\) is 1-connected.
\todo{all spaces are finite type?}


This article is organized as follows.
\cref{section:extBasic} contains basic definitions and properties of \(\ext\),
which we use in definitions of the brane coproducts.
In \cref{section:reviewPreviousBraneCoproduct},
we review the previous construction of the brane coproduct.
We define the non-symmetric brane coproduct in \cref{section:newCoprodConstruction},
and, under some assumptions, explicitly compute it in \cref{section:computeNewCoprod}.
In \cref{section:comparison},
we compare two brane coproducts
and prove \cref{theorem:vanishingMainTheorem} and \cref{theorem:vanishingGeneral},
using explicit construction of shriek maps given in \cref{section:modelOfShriek}.

\tableofcontents

\section{Definition and properties of \(\ext\)}
\label{section:extBasic}
Let \(A\) be a differential graded algebra (dga),
and \(M\) and \(N\) \(A\)-modules over a field \(\K\) of any characteristic.
Then the extension module is defined as
\begin{math}
  \ext_A(M,N) = \cohom{\hom_A(P,A)},
\end{math}
where \(P\) is a semifree resolution of \(M\) over \(A\).
See \cite[Section 6]{felix-halperin-thomas01} for details of semifree resolutions.
For an element \(\alpha \in \ext_A(M,N)\),
we define
\begin{math}
  \exttohom{\alpha}\colon
  \cohom{M} \xleftarrow{\cong} \cohom{P}
  \xrightarrow{\cohom{\alpha}} \cohom{N}.
\end{math}
This defines a linear map
\begin{math}
  \ext_A(M,N) \to \hom_{\cohom{A}}(\cohom{M},\cohom{N});\ \alpha \mapsto \exttohom{\alpha}.
\end{math}

\newcommand{\pb}{D}
Consider a pullback diagram
\begin{equation}
  \begin{tikzcd}
    \pb \ar[r] \ar[d] & E \ar[d,"p"] \\
    A \ar[r] & B
  \end{tikzcd}
\end{equation}
such that \(p\colon E\to B\) is a fibration
and \(B\) is 1-connected.
Let us recall the linear map
\begin{equation}
  \extlift{p}\colon
  \ext_{\cochain{B}}^l(\cochain{A}, \cochain{B})
  \to \ext_{\cochain{E}}^l(\cochain{\pb},\cochain{E})
\end{equation}
introduced in \cite[Remark after Theorem 2]{felix-thomas09}.
Let \(P\) be a semifree resolution of \(\cochain{A}\) over \(\cochain{B}\).
Then we have a linear map
\begin{equation}
  \label{equation:chainLevelLift}
  \hom_{\cochain{B}}(P,\cochain{B})
  \to \hom_{\cochain{E}}(\cochain{E}\otimes_{\cochain{B}}P, \cochain{E}\otimes_{\cochain{B}}\cochain{B})
\end{equation}
by sending
\begin{math}
  \varphi \in \hom_{\cochain{B}}(P,\cochain{B})
\end{math}
to
\begin{math}
  \id_{\cochain{E}}\otimes\varphi.
\end{math}
Here, \(\cochain{E}\otimes_{\cochain{B}}P\) is
a semifree \(\cochain{E}\) module by \cite[Lemma 6.2]{felix-halperin-thomas01}.
Moreover, the Eilenberg-Moore map
\begin{math}
  \cochain{E}\otimes_{\cochain{B}}P \to \cochain{D}
\end{math}
is a quasi-isomorphism by the Eilenberg-Moore theorem
\cite[Theorem 3.2]{smith67}.
Hence \(\cochain{E}\otimes_{\cochain{B}}P\) is
a semifree resolution of \(\cochain{\pb}\) over \(\cochain{E}\),
and the linear map \cref{equation:chainLevelLift}
induces the required map \(\extlift{p}\).

The above constructions satisfy naturality in the following sense,
which can be proved directly from the definitions.
\begin{proposition}
  \label{proposition:naturalityOfShriek}
  Consider a diagram
  \begin{equation}
    \label{pullback hom 4}
    \begin{tikzcd}[row sep=2.5em]
      & E' \ar[dd,"p'" near end] \ar[dl,swap,"\varphi"]&&
      X' \ar[ll,""] \ar[dd,""] \ar[dl,"\psi"]\\
      E \ar[dd,swap,"p"] &&
      X \ar[ll,crossing over,"" near start,swap] \\
      & B' \ar[dl,swap,"a",swap] && A' \ar[ll,"" near end] \ar[dl,"b"]\\
      B &&
      A \ar[ll,""] \ar[uu,<-,crossing over,swap,"" near end]
    \end{tikzcd}
  \end{equation}
  and elements \(\alpha \in \ext^m_{\cochain{B}}(\cochain{A}, \cochain{B})\)
  and \(\alpha' \in \ext^m_{\cochain{B'}}(\cochain{A'}, \cochain{B'})\).
  Here \(p\) and \(p'\) are fibrations
  and the front and back squares are pullback diagrams.
  Assume that the elements \(\alpha\) and \(\alpha'\)
  are mapped to the same element in \(\ext^m_{\cochain{B}}(\cochain{A}, \cochain{B'})\)
  by the morphisms induced by \(a\) and \(b\),
  and that the Eilenberg-Moore maps of two pullback diagrams are isomorphisms.
  Then the following diagram commutes.
  \begin{equation}
    \begin{tikzcd}[column sep=huge]
      \cohom{X} \ar[r,"\exttohom{\extlift{p}\alpha}"] \ar[d,"\varphi^*"] & \cohom[*+m]{E} \ar[d,"\psi^*"]\\
      \cohom{X'} \ar[r,"\exttohom{\extlift{p'}\alpha'}"] & \cohom[*+m]{E'}
    \end{tikzcd}
  \end{equation}
\end{proposition}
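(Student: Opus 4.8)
The plan is to unwind, at the cochain level, the definitions of \(\extlift{(-)}\) and \(\exttohom{(-)}\), and to reduce the asserted commutativity to the hypothesis that \(\alpha\) and \(\alpha'\) have the same image in \(\ext^m_{\cochain{B}}(\cochain{A},\cochain{B'})\). First I would choose a semifree resolution \(\varepsilon\colon P\xrightarrow{\qis}\cochain{A}\) over \(\cochain{B}\) together with a degree-\(m\) cocycle \(\widetilde\alpha\in\hom_{\cochain{B}}(P,\cochain{B})\) representing \(\alpha\), and likewise a semifree resolution \(\varepsilon'\colon P'\xrightarrow{\qis}\cochain{A'}\) over \(\cochain{B'}\) with a degree-\(m\) cocycle \(\widetilde{\alpha'}\) representing \(\alpha'\). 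Write \(\cochain{a}\colon\cochain{B}\to\cochain{B'}\), \(\cochain{b}\colon\cochain{A}\to\cochain{A'}\), \(\cochain{\varphi}\colon\cochain{E}\to\cochain{E'}\), \(\cochain{\psi}\colon\cochain{X}\to\cochain{X'}\) for the induced maps of cochain algebras/modules. Regarding \(P'\) as a \(\cochain{B}\)-module by restriction along \(\cochain{a}\), the map \(\cochain{b}\circ\varepsilon\colon P\to\cochain{A'}\) is \(\cochain{B}\)-linear because the corresponding face of the cube commutes; since \(P\) is \(\cochain{B}\)-semifree and \(\varepsilon'\) is a quasi-isomorphism, \(\cochain{b}\circ\varepsilon\) lifts through \(\varepsilon'\) to a \(\cochain{B}\)-linear chain map \(\beta\colon P\to P'\), unique up to \(\cochain{B}\)-linear homotopy \cite[Section~6]{felix-halperin-thomas01}. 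Then \(e\otimes x\mapsto\cochain{\varphi}(e)\otimes\beta(x)\) defines a chain map \(\mu\colon\cochain{E}\otimes_{\cochain{B}}P\to\cochain{E'}\otimes_{\cochain{B'}}P'\) — well defined because the face of the cube through \(\varphi\) and \(a\) commutes, so \(\cochain{\varphi}\) carries the \(\cochain{B}\)-action into the \(\cochain{B'}\)-action via \(\cochain{a}\). By the construction recalled just before the proposition, \(\cochain{E}\otimes_{\cochain{B}}P\) and \(\cochain{E'}\otimes_{\cochain{B'}}P'\) are semifree resolutions of \(\cochain{X}\) and \(\cochain{X'}\) via the Eilenberg--Moore maps, and \(\extlift{p}\alpha\), \(\extlift{p'}\alpha'\) are represented by \(\id_{\cochain{E}}\otimes\widetilde\alpha\) and \(\id_{\cochain{E'}}\otimes\widetilde{\alpha'}\) respectively.

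It then remains to check three chain-level facts and assemble them. \emph{(a) Naturality of the Eilenberg--Moore map.} Writing that map as \(\cochain{E}\otimes_{\cochain{B}}P\xrightarrow{\id\otimes\varepsilon}\cochain{E}\otimes_{\cochain{B}}\cochain{A}\xrightarrow{\theta}\cochain{X}\), where \(\theta\) multiplies the images of \(\cochain{E}\) and \(\cochain{A}\) in \(\cochain{X}\), one sees that \(\theta\) is natural in the morphisms of the cube and that the homotopy \(\varepsilon'\beta\simeq\cochain{b}\varepsilon\) makes the square formed by \(\mu\), \(\cochain{\psi}\) and the two Eilenberg--Moore maps commute up to homotopy; since these maps are isomorphisms on cohomology by hypothesis, \(H(\mu)\) corresponds to \(\psi^*\) under the identifications \(\cohom{X}\cong H(\cochain{E}\otimes_{\cochain{B}}P)\) and \(\cohom{X'}\cong H(\cochain{E'}\otimes_{\cochain{B'}}P')\). \emph{(b) Rewriting the two composites.} Let \(q\colon\cochain{E}\otimes_{\cochain{B}}\cochain{B'}\to\cochain{E'}\) be \(e\otimes b'\mapsto\cochain{\varphi}(e)\cdot b'\). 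A direct computation gives \(\cochain{\varphi}\circ(\id_{\cochain{E}}\otimes\widetilde\alpha)=q\circ(\id_{\cochain{E}}\otimes(\cochain{a}\circ\widetilde\alpha))\) and \((\id_{\cochain{E'}}\otimes\widetilde{\alpha'})\circ\mu=q\circ(\id_{\cochain{E}}\otimes(\widetilde{\alpha'}\circ\beta))\) as chain maps \(\cochain{E}\otimes_{\cochain{B}}P\to\cochain{E'}\). \emph{(c) Using the hypothesis.} By the definitions of the functoriality morphisms for \(\ext\) (see \cref{section:extBasic}), the cocycles \(\cochain{a}\circ\widetilde\alpha\) and \(\widetilde{\alpha'}\circ\beta\) in \(\hom_{\cochain{B}}(P,\cochain{B'})\) represent, respectively, the image of \(\alpha\) under the morphism induced by \(a\) and the image of \(\alpha'\) under the morphism induced by \(a\) and \(b\), both lying in \(\ext^m_{\cochain{B}}(\cochain{A},\cochain{B'})=H(\hom_{\cochain{B}}(P,\cochain{B'}))\). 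By hypothesis these classes coincide, so \(\cochain{a}\circ\widetilde\alpha-\widetilde{\alpha'}\circ\beta=D(h)\) for some \(\cochain{B}\)-linear \(h\colon P\to\cochain{B'}\) of degree \(m-1\), where \(D\) denotes the differential on the \(\hom\)-complex. Applying the chain map \(q\circ(\id_{\cochain{E}}\otimes-)\colon\hom_{\cochain{B}}(P,\cochain{B'})\to\hom(\cochain{E}\otimes_{\cochain{B}}P,\cochain{E'})\), we conclude from (b) that \(\cochain{\varphi}\circ(\id\otimes\widetilde\alpha)\) and \((\id\otimes\widetilde{\alpha'})\circ\mu\) differ by the coboundary \(D(q\circ(\id\otimes h))\), hence are chain homotopic. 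Passing to cohomology and combining with (a) then yields \(\varphi^*\circ\exttohom{\extlift{p}\alpha}=\exttohom{\extlift{p'}\alpha'}\circ\psi^*\), which is the assertion.

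The routine parts are the existence and essential uniqueness of \(\beta\) (the standard lifting properties of semifree modules) and the sign bookkeeping in (b) and (c). The step that requires the most care is the naturality statement (a): one must make precise the natural comparison \(\cochain{E}\otimes_{\cochain{B}}\cochain{A}\to\cochain{X}\) attached to a commutative square of spaces and verify its compatibility with all the morphisms of the cube — and it is exactly here that the hypothesis that the two Eilenberg--Moore maps are isomorphisms is used, in order to transport the chain-level identity back to \(\cohom{X}\) and \(\cohom{X'}\).
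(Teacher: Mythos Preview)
Your proposal is correct and is exactly what the paper means by ``can be proved directly from the definitions'': the paper gives no further argument for this proposition, and your careful unwinding of the resolutions, the lift \(\beta\), the comparison map \(\mu\), and the naturality of the Eilenberg--Moore maps is precisely the routine verification the paper omits. There is nothing to add.
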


\section{Review of the previous construction of the brane coproduct}
\label{section:reviewPreviousBraneCoproduct}
In this section,
we review the previous construction of the brane coprdouct from \cite{wakatsuki18:toappear}.
Here we explain it in a generalized way,
which is necessary for the comparison in \cref{section:comparison}.

First we give a general construction.
Let \(\K\) be a field of any characteristic,
\(k\) a positive integer,
\(S\) and \(T\) \(k\)-dimensional manifolds, and
\(M\) a \(k\)-connected space.
We fix an arbitrary element
\begin{equation}
  \myshriek\in
  \ext^n_{\cochain{\spheresp{k-1}{M}}}(\cochain{M}, \cochain{\spheresp{k-1}{M}}).
\end{equation}

To define the brane coproduct,
consider the diagram
\begin{equation}
  \label{equation:STCopDiagram}
  \begin{tikzcd}
    M^{S\#T}\ar[d,"\res"] & M^S\times_MM^T \ar[l,"\comp"']\ar[d]\ar[r,"\incl"] & M^S\times M^T\\
    \spheresp{k-1}{M} & M, \ar[l,"\inclconst"]
  \end{tikzcd}
\end{equation}
where
the square is a pullback diagram,
the map \(\res\) is the restriction map to the embedded sphere \(S^{k-1}\)
which comes from the connected sum \(S\#T\), and
the map \(\inclconst\) is an embedding as constant maps.

Then the dual
\begin{equation}
  \dcop_\myshriek\colon \cohom[*]{M^S\times M^T} \to \cohom[*+n]{M^{S\#T}}
\end{equation}
of the brane coproduct with respect to \(\myshriek\) is defined as the composition
\begin{equation}
  \shriek\comp\circ\incl^*\colon\cohom{\mapsp{S}{M}\times\mapsp{T}{M}}
  \xrightarrow{\incl^*}\cohom{\mapsp{S}{M}\times_M\mapsp{T}{M}}
  \xrightarrow{\shriek\comp}\cohom[*+n]{\mapsp{S\#T}{M}}.
\end{equation}
Here the shriek map \(\shriek\comp\) is defined by
\begin{math}
  \shriek\comp = \exttohom{\extlift{\res}(\myshriek)}.
\end{math}

Next we specify the element \(\myshriek\) under some assumptions,
which was considered in \cite{wakatsuki18:toappear}.
Here we use the notion of a Gorenstein space.

\begin{definition}[{\cite{felix-halperin-thomas88}}]
  \label{definition:Gorenstein}
  Let \(m\in\Z\) be an integer.
  A path-connected topological space \(M\) is called
  a \textit{(\(\K\)-)Gorenstein space} of dimension \(m\)
  if
  \begin{equation}
    \dim \ext_{\cochain{M}}^l(\K, \cochain{M}) =
    \begin{cases}
      1, & \mbox{if \(l = m\)} \\
      0, & \mbox{otherwise.}
    \end{cases}
  \end{equation}
\end{definition}

For example,
a Poincaré duality space over \(\K\) is a \(\K\)-Gorenstein space,
and its dimension as a Gorenstein space coincides
with the one as a Poincaré duality space.
Moreover,
the following proposition gives an important example of a Gorenstein space.

\begin{proposition}
  [{\cite[Proposition 3.4]{felix-halperin-thomas88}}]
  \label{proposition:FinDimImplyGorenstein}
  A 1-connected topological space \(M\) is a \(\K\)-Gorenstein space
  if \(\K\) is a field of characteristic zero
  and \(\pi_*(M)\otimes\K\) is finite dimensional.
\end{proposition}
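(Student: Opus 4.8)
The plan is to pass to a minimal Sullivan model and reduce the statement to an explicit computation of $\ext$ over a finitely generated free graded-commutative algebra. Since $\chara\K = 0$, $M$ is $1$-connected, and $\dim_\K\bigl(\bigoplus_n\pi_n(M)\otimes\K\bigr)<\infty$, the cochain algebra $\cochain{M}$ (with $\K$-coefficients) is connected by a zig-zag of quasi-isomorphisms of differential graded algebras to a minimal Sullivan algebra $(\Lambda V, d)$ with $V = V^{\geq 2}$ and $\dim_\K V = \sum_n\dim_\K\pi_n(M)\otimes\K<\infty$ (in particular $\cohom{M}$ is automatically of finite type). The functor $\ext$ is invariant under quasi-isomorphism in each variable — replacing the ground algebra by a quasi-isomorphic one carries a semifree resolution of a module to a semifree resolution over the new algebra, and $\hom$ then computes the same cohomology — and along the zig-zag the augmentation modules correspond to one another. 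Hence
\begin{equation}
  \ext_{\cochain{M}}(\K,\cochain{M})\;\cong\;\ext_{(\Lambda V,d)}(\K,\Lambda V),
\end{equation}
and it suffices to show the right-hand side is one-dimensional, concentrated in a single degree.

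First I would form the acyclic closure of $\K$ over $(\Lambda V, d)$: a $\Lambda V$-semifree resolution $(\Lambda V\otimes\Lambda W, D)\xrightarrow{\simeq}\K$, where $W$ has a homogeneous basis $\{w_i\}$ in bijection with a basis $\{v_i\}$ of $V$ with $|w_i| = |v_i|-1$, with $D$ restricting to $d$ on $\Lambda V$ and $Dw_i\equiv v_i$ modulo the ideal generated by $W$ and $\Lambda^{\geq 2}V$. Then
\begin{equation}
  \ext_{(\Lambda V,d)}(\K,\Lambda V)\;=\;H\bigl(\hom_{\Lambda V}(\Lambda V\otimes\Lambda W,\Lambda V)\bigr)\;=\;H\bigl(\Lambda V\otimes(\Lambda W)^\dual, D^\dual\bigr).
\end{equation}
Next I would filter this complex using an iterated Hirsch (Koszul--Sullivan) decomposition $\K=\Lambda V_0\subset\Lambda V_1\subset\cdots\subset\Lambda V_N=\Lambda V$ with each $V_j/V_{j-1}$ one-dimensional, and evaluate the associated spectral sequence one generator at a time. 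A direct Koszul computation shows that adjoining a single odd-degree generator $v$ multiplies the $\ext$ by a tensor factor concentrated in degree $|v|$, while adjoining an even-degree generator $v$ multiplies it by a tensor factor concentrated in degree $-(|v|-1)$; this is exactly where $\chara\K=0$ enters, since it is what forces these factors to be one-dimensional rather than infinite-dimensional (in characteristic $p$ the $p$-th powers of the even-degree classes survive, reflecting the failure of Gorenstein-ness for $K(\Z,n)$ mod $p$). Each factor being one-dimensional, the spectral sequence collapses, so $\ext_{(\Lambda V,d)}(\K,\Lambda V)$ is one-dimensional, concentrated in degree
\begin{equation}
  m\;=\;\sum_{|v_i|\ \text{odd}}|v_i|\;-\;\sum_{|v_i|\ \text{even}}(|v_i|-1),
\end{equation}
which is precisely the assertion that $M$ is $\K$-Gorenstein of dimension $m$ (and when $M$ is a Poincaré duality space one checks this $m$ is the Poincaré duality dimension).

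Alternatively, and more in the spirit of \cref{section:extBasic}, one can organize the same argument geometrically: the Koszul--Sullivan tower is realized (after rationalization) by a tower of principal fibrations $M_\K=M_N\to M_{N-1}\to\cdots\to M_0=\pt$ with fibers Eilenberg--MacLane spaces $K(\K,n)$, each of which is $\K$-Gorenstein by the Koszul computation above (of dimension $n$ for $n$ odd, of dimension $-(n-1)$ for $n$ even), and one propagates the Gorenstein property up the tower using the lifted maps $\extlift{p}$ and the Eilenberg--Moore comparison of semifree resolutions, with dimensions adding. In either route the one real obstacle is the same: showing the relevant cohomology is \emph{exactly} one-dimensional, not merely nonzero in the expected degree — in the first route this is the collapse of the generator-by-generator spectral sequence and its use of $\chara\K=0$, and in the second it is the multiplicativity of the Gorenstein condition in a fibration over a $1$-connected base, which is precisely where the behaviour of $\extlift{p}$ under the Eilenberg--Moore isomorphism does the work.
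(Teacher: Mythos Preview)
The paper does not give its own proof of this proposition; it is quoted verbatim from \cite[Proposition 3.4]{felix-halperin-thomas88} and used as a black box. So there is nothing in the present paper to compare your argument against.

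That said, your sketch is the standard route and is essentially the argument in the cited source: pass to a minimal Sullivan model $(\Lambda V,d)$ with $\dim V<\infty$, use the acyclic closure $\Lambda V\otimes\Lambda(\susp{V})$ as a semifree resolution of $\K$, and compute $\ext$ one generator at a time via a Koszul/Hirsch filtration. Your dimension formula $m=\sum_{|v_i|\text{ odd}}|v_i|-\sum_{|v_i|\text{ even}}(|v_i|-1)$ agrees with \cite[Proposition 5.2]{felix-halperin-thomas88}, which the present paper also invokes (see \cref{remark:assumpIsNatural}). The one place where your write-up is genuinely sketchy is the sentence ``Each factor being one-dimensional, the spectral sequence collapses'': one-dimensionality of the associated graded pieces does not by itself preclude later differentials, and what actually forces collapse is that the single surviving class sits in a fixed total degree at each stage so there is no room for a nonzero differential. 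Your alternative fibration-tower phrasing via $\extlift{p}$ is closer to how one makes this step honest, and is indeed the multiplicativity statement proved in \cite{felix-halperin-thomas88}.
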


Now we can specify the element \(\myshriek\)
by the following theorem.

\begin{theorem}[{\cite[Corollary 3.2]{wakatsuki18:toappear}}]
  \label{theorem:extSphereSpace}
  Assume \(\K\) is a field of characteristic zero.
  Let \(M\) be a \((k-1)\)-connected (and 1-connected) space of finite type
  such that \(\Omega^{k-1}M\) is a Gorenstein space of dimension \(\bar{m}\).
  Then we have an isomorphism
  \begin{equation}
    \ext^l_{\cochain{\spheresp{k-1}{M}}}(\cochain{M}, \cochain{\spheresp{k-1}{M}}) \cong \cohom[l-\bar{m}]{M}
  \end{equation}
  for any \(l\in\Z\).
\end{theorem}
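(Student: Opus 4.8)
The plan is to compute the left-hand side through the evaluation fibration
\[
  \Omega^{k-1}M \longrightarrow \spheresp{k-1}{M} \xrightarrow{\ \ev\ } M,
\]
whose section is the constant-map embedding $\inclconst$; note that the $\cochain{\spheresp{k-1}{M}}$-module structure on $\cochain{M}$ in the statement is exactly the one induced by $\inclconst^{*}$. What one wants to prove is that this fibration is \emph{relatively Gorenstein over its base}, i.e.\ that
\[
  \ext^{l}_{\cochain{\spheresp{k-1}{M}}}\bigl(\cochain{M},\cochain{\spheresp{k-1}{M}}\bigr)
  \cong \bigoplus_{i+j=l}\cohom[i]{M}\otimes\ext^{j}_{\cochain{\Omega^{k-1}M}}(\K,\cochain{\Omega^{k-1}M}).
\]
Given this, the Gorenstein hypothesis on $\Omega^{k-1}M$ makes the right-hand factor equal to $\K$ in degree $\bar{m}$ and $0$ otherwise, so the whole expression collapses to $\cohom[l-\bar{m}]{M}$, which is the assertion.

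For $k=1$ we have $\spheresp{0}{M}=M\times M$, $\Omega^{0}M=M$, $\inclconst=\Delta$, and the displayed isomorphism is the classical change-of-rings computation of $\ext_{\cochain{M}\otimes\cochain{M}}(\cochain{M},\cochain{M}\otimes\cochain{M})$ for the diagonal. For $k\ge 2$ I would use Sullivan models, which is where $\chara\K=0$ and the $(k-1)$-connectedness enter: take a minimal model $(\Lambda V,d)$ of $M$ with $V=V^{\ge k}$; then $\cochain{\spheresp{k-1}{M}}$ is quasi-isomorphic to the Haefliger/Brown--Szczarba model $R=(\Lambda V\otimes\Lambda\overline V,D)$ with $\overline V=s^{-(k-1)}V$ in positive degrees, $D|_{\Lambda V}=d$, $D\overline v=\pm\overline{dv}$ for the degree $-(k-1)$ derivation $\overline{(\cdot)}$ extending $v\mapsto\overline v$, with $\ev^{*}$ the inclusion $(\Lambda V,d)\hookrightarrow R$ and $\inclconst^{*}$ the retraction $R\to(\Lambda V,d)$ killing $\overline V$ (the connectivity is what keeps $\overline V$ in positive degrees, so that this naive model is valid). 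Since $M$ is itself the total space of the fibration $\Omega^{k}M\to M\xrightarrow{\inclconst}\spheresp{k-1}{M}$, its relative Sullivan model $P=R\otimes\Lambda Z$ over $R$ is a semifree resolution of $\cochain{M}$ over $\cochain{\spheresp{k-1}{M}}$, so the Ext we want is the cohomology of $R\otimes(\Lambda Z)^{\dual}$.

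I would then filter $R\otimes(\Lambda Z)^{\dual}$ by word length in $V$ and analyze the resulting spectral sequence. In word length $0$ the complex is the dual of the acyclic closure of $\K$ over $(\Lambda\overline V,0)$ --- geometrically, restricting $\Omega^{k}M\to M\to\spheresp{k-1}{M}$ over the fibre $\Omega^{k-1}M$ of $\ev$ gives the path fibration of $\Omega^{k-1}M$ --- so its cohomology is $\ext^{*}_{\cochain{\Omega^{k-1}M}}(\K,\cochain{\Omega^{k-1}M})$, concentrated in degree $\bar{m}$ by hypothesis. Because that cohomology sits in a single degree and $V=V^{\ge k}$ contains nothing in degree $1$, the ``cross terms'' of $D$ and of the Koszul--Tate differential (each involving a $V$-factor) are forced to act trivially on the $E_{1}$-page; what is left is $\bigoplus_{p}\Lambda^{p}V$ filtered by word length, tensored with $\ext^{*}_{\cochain{\Omega^{k-1}M}}(\K,\cochain{\Omega^{k-1}M})$, the remaining differentials being those of the word-length spectral sequence of $(\Lambda V,d)$. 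Since the latter converges to $H(\Lambda V,d)=\cohom{M}$, ours converges to $\cohom{M}\otimes\ext^{*}_{\cochain{\Omega^{k-1}M}}(\K,\cochain{\Omega^{k-1}M})$; the second factor being one-dimensional, there is no extension problem, and one obtains the displayed isomorphism, with $1\in\cohom[0]{M}$ corresponding to the Gorenstein class of $\Omega^{k-1}M$.

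The step I expect to be the main obstacle is making this filtration argument rigorous. One has to construct the acyclic closure $\Lambda Z$ together with all its Tate corrections --- there are real complications from odd generators of $\overline V$, which square to zero and so force higher (divided-power) Tate generators even in characteristic zero --- verify that word length is preserved well enough to produce a convergent spectral sequence, and carry out the degree bookkeeping that kills the cross terms; it is precisely here that $k\ge 2$ together with simple connectivity is indispensable, since for $k=1$ the derivation $\overline{(\cdot)}$ has degree $0$, the cross terms survive, and one must fall back on the separate change-of-rings argument mentioned above. Once the $E_{1}$-page is identified and the Gorenstein hypothesis is invoked, the remainder is formal.
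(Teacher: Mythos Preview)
This paper does not prove the theorem: it is quoted as \cite[Corollary~3.2]{wakatsuki18:toappear} from the author's earlier work and used here without argument, so there is no proof in the present paper to compare your proposal against.

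Your outline is nonetheless the natural one and is almost certainly what the cited proof does: pass to Sullivan models, take the relative model $R\otimes\Lambda Z\xrightarrow{\simeq}\Lambda V$ of $\inclconst\colon M\to\spheresp{k-1}{M}$ as a semifree resolution over $R=\Lambda V\otimes\Lambda\bar V$, and filter $\hom_{R}(R\otimes\Lambda Z,R)$ by word length in $V$ to obtain a spectral sequence with $E_{1}\cong\Lambda V\otimes\ext_{\Lambda\bar V}(\K,\Lambda\bar V)$; the Gorenstein hypothesis concentrates the second tensor factor in a single degree, the surviving differentials rebuild $H(\Lambda V,d)$, and there is no extension problem. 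One correction: your worry about divided-power Tate generators is unfounded in characteristic zero. For an odd generator $\bar v\in\bar V$ the extension $\Lambda(\bar v)\otimes\Lambda(z)$ with $dz=\bar v$ is already acyclic, since $d(z^{n})=nz^{n-1}\bar v$ and $n$ is invertible; no higher corrections are needed, and the Koszul-type resolution of $\K$ over $(\Lambda\bar V,\bar d)$ is the ordinary one. The genuine subtlety you should watch instead is the one you mention only in passing: $(k-1)$-connectivity gives $V=V^{\ge k}$ and hence $\bar V=\bar V^{\ge 1}$, so $\spheresp{k-1}{M}$ may fail to be simply connected and the naive model requires care in degree~$1$. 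This is exactly why the explicit Sullivan-model constructions later in this paper (Section~\ref{section:modelOfShriek}) impose the stronger hypothesis $V^{\le k}=0$.
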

When \(l=\bar{m}\), we have the generator
\begin{equation}
  \label{equation:shriekInExt}
  \shriek\inclconst\in
  \ext^{\bar{m}}_{\cochain{\spheresp{k-1}{M}}}(\cochain{M}, \cochain{\spheresp{k-1}{M}})
  \cong \cohom[0]{M} \cong \K
\end{equation}
up to non-zero scalar multiplication.
The brane coproduct \(\dcop_{\shriek\inclconst}\)
for the case \(\myshriek=\shriek\inclconst\)
is the brane coproduct constructed in \cite{wakatsuki18:toappear}.

\section{New construction of the brane coproduct}
\label{section:newCoprodConstruction}
In this section,
we give a new construction of the brane coproduct,
which we call the \textit{non-symmetric} brane coproduct.
This is different from the previous one
and we will compare them in \cref{section:comparison}.

Let \(\K\) be a field of any characteristic,
\(k\) a positive integer,
\(T\) a \(k\)-dimensional manifold with a base point \(t_0\), and
\(M\) a \(1\)-connected Poincaré duality space of dimension \(m\).
We fix base points \(d_0\in D^k\) and \(s_0\in S^k\)
such that \(d_0\) is mapped to \(s_0\) by the quotient map \(D^k \twoheadrightarrow S^k\).
For an element \(g \in \mapsp{T}{M}\),
we denote by \(\mapsp[g]{T}{M}\) the component of \(\mapsp{T}{M}\) containing \(g\).

\newcommand{\wedgequot}{q}
For \(f\in\spheresp{k}{M}\)
and \(g\in\mapsp{T}{M}\),
we define a map \(f+g\in\mapsp{T}{M}\) as follows.
Fix an embedded \(k\)-disk around \(t_0\) in \(T\).
Then we have
the quotient map \(\wedgequot\colon T \to S^k\vee T\),
which is given by pinching the boundary of the embedded disk.
Since \(M\) is path-connected,
there is a map \(f'\in\spheresp{k}{M}\)
such that \(f'(s_0)=g(t_0)\) and \(f'\) is homotopic to \(f\) (without preserving base points).
Define \(f+g\) to be the composition
\begin{math}
  T \xrightarrow{\wedgequot} S^k\vee T \xrightarrow{f'\vee g} M.
\end{math}
Since \(M\) is 1-connected,
the map \(f+g\) is well-defined up to homotopy.

\newcommand{\inclsd}{\iota}
Instead of \cref{equation:STCopDiagram},
we consider the diagram
\begin{equation}
  \label{equation:newCopDiagram}
  \begin{tikzcd}
    \mapsp[f+g]{T}{M} \ar[d,"\res"]
    & \spheresp[f]{k}{M} \times_M \mapsp[g]{T}{M} \ar[l,"\comp"] \ar[r,"\incl"] \ar[d,"\pr_1"]
    & \spheresp[f]{k}{M} \times \mapsp[g]{T}{M} \\
    \disksp{k}{M} & \spheresp[f]{k}{M}, \ar[l,"\inclsd"]
  \end{tikzcd}
\end{equation}
where
the square is a pullback diagram,
the map \(\res\) is the restriction to the embedded \(k\)-disk, and
the map \(\inclsd\) is the inclusion induced by the quotient map \(D^k\to S^k\).

Note that
the above diagram is related to the diagram \cref{equation:STCopDiagram}
in the following way.
When \(M\) is \(k\)-connected,
we have the diagram
\begin{equation}
  \label{equation:relationDiagram}
  \begin{tikzcd}
    \mapsp{T}{M} \ar[d,"\res"]
    & \spheresp{k}{M} \times_M \mapsp{T}{M} \ar[l,"\comp"] \ar[d,"\pr_1"] \\
    \disksp{k}{M} \ar[d,"\res"] & \spheresp{k}{M} \ar[l,"\inclsd"] \ar[d,"\ev"]\\
    \spheresp{k-1}{M} & M, \ar[l,"\inclconst"]
  \end{tikzcd}
\end{equation}
where the two squares are pullback diagrams (and hence so is the outer square).
In this diagram,
the upper square coincides with \cref{equation:newCopDiagram}
and the outer square coincides with \cref{equation:STCopDiagram}.
We use this diagram
to compare the two brane coproducts in \cref{section:comparison}.

We define the dual
\begin{equation}
  \dcopnew\colon\cohom[*]{\spheresp[f]{k}{M} \times_M\mapsp[g]{T}{M}}
  \to \cohom[*+m]{\mapsp[f+g]{T}{M}}
\end{equation}
of the brane coproduct
by the composition
\begin{equation}
  \shriek\comp \circ \incl^*\colon
  \cohom{\mapsp[f+g]{T}{M}}
  \xrightarrow{\incl^*} \cohom{\spheresp[f]{k}{M} \times_M \mapsp[g]{T}{M}}
  \xrightarrow{\shriek\comp} \cohom[*+m]{\spheresp[f]{k}{M} \times \mapsp[g]{T}{M}}.
\end{equation}
Here,
\(\shriek\comp\) is the shriek map constructed from the diagram \cref{equation:newCopDiagram}.
In order to define it,
we need the corollary of the following proposition.
\todo{\cite[p.427, Lemma 1]{felix-thomas09}に載ってる．
  可換図式(\cref{proposition:diagonalClass}で使う)はないけど，どうだろう？}

\newcommand{\maptopd}{F}
\newcommand{\isomexthomol}{\Phi}
\newcommand{\exteval}{\Psi}
\newcommand{\partialeval}{{\mathcal E}}
\newcommand{\elmCohomX}{x}
\newcommand{\cohomdeg}{j}
\begin{proposition}[{\cite[Lemma 1]{felix-thomas09}}]
  \label{proposition:extPoincare}
  Let \(\maptopd\colon X\to N\) be a map between 0-connected spaces.
  Assume that \(N\) is a Poincaré duality space of dimension \(n\).
  Define a linear map
  \begin{equation}
    \isomexthomol\colon
    \ext^l_{\cochain{N}}(\cochain{X},\cochain{N})
    \to \hom_\K(\cohom[n-l]{X}, \cohom[n]{N})
  \end{equation}
  by
  \begin{math}
    \isomexthomol(\alpha) = \exttohom{\alpha}|_{\cohom[n-l]{X}}.
  \end{math}
  Then \(\isomexthomol\) is an isomorphism.

\end{proposition}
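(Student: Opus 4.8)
The plan is to derive the isomorphism from Poincaré duality of \(N\), following the argument of \cite{felix-thomas09}. First I would fix a semifree resolution \(\epsilon\colon P\to\cochain{X}\) of \(\cochain{X}\) over \(\cochain{N}\) (see \cref{section:extBasic}), so that by definition \(\ext^l_{\cochain{N}}(\cochain{X},\cochain{N})=\cohom[l]{\hom_{\cochain{N}}(P,\cochain{N})}\), and for a degree-\(l\) cocycle \(\alpha\colon P\to\cochain{N}\) the map \(\exttohom{\alpha}\) is represented by \(\cohom{X}\xleftarrow{\cong}\cohom{P}\xrightarrow{\alpha^*}\cohom{N}\).

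The key step is to replace the coefficient module \(\cochain{N}\) by \(\chain{N}\), the singular chain complex, regarded as a \(\cochain{N}\)-module via the cap product and identified over \(\K\) with \(\hom_\K(\cochain{N},\K)\). Since \(N\) is a Poincaré duality space of dimension \(n\), I would choose a cycle \(z\in\chain[n]{N}\) representing a fundamental class. The identity \((a\cup b)\cap z=a\cap(b\cap z)\) shows that \(-\cap z\colon\cochain{N}\to\chain{N}\) is \(\cochain{N}\)-linear of (cohomological) degree \(-n\), and it is a quasi-isomorphism exactly because \(N\) satisfies Poincaré duality. As \(P\) is semifree, postcomposition with \(-\cap z\) induces a quasi-isomorphism \(\hom_{\cochain{N}}(P,\cochain{N})\to\hom_{\cochain{N}}(P,\chain{N})\). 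The tensor--hom adjunction gives a natural isomorphism \(\hom_{\cochain{N}}(P,\hom_\K(\cochain{N},\K))\cong\hom_\K(P,\K)\), and since \(\epsilon\) is a quasi-isomorphism and \(\K\) is a field, \(\hom_\K(P,\K)\) has cohomology \(\hom_\K(\cohom{X},\K)\). Combining these, I obtain an isomorphism \(\ext^l_{\cochain{N}}(\cochain{X},\cochain{N})\cong\hom_\K(\cohom[n-l]{X},\K)\), which I identify with \(\hom_\K(\cohom[n-l]{X},\cohom[n]{N})\) via the isomorphism \(\cohom[n]{N}\cong\K\) induced by evaluation against \(z\); note that the degrees match, which is a sanity check on the approach.

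It then remains to verify that this composite isomorphism is precisely \(\isomexthomol\). Unwinding the adjunction, a degree-\(l\) cocycle \(\alpha\colon P\to\cochain{N}\) is carried to the functional on \(\cohom[n-l]{P}\) represented by \(p\mapsto\langle\alpha(p),z\rangle\), where \(\langle-,-\rangle\colon\cochain{N}\otimes\chain{N}\to\K\) is the evaluation pairing. Transporting this along the isomorphism \(\cohom{P}\cong\cohom{X}\) induced by \(\epsilon\), and using that \(\langle-,z\rangle\) represents the chosen isomorphism \(\cohom[n]{N}\cong\K\), the functional becomes \(x\mapsto\langle\exttohom{\alpha}(x),[N]\rangle\) on \(\cohom[n-l]{X}\), i.e.\ it is exactly \(\exttohom{\alpha}|_{\cohom[n-l]{X}}=\isomexthomol(\alpha)\). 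Hence \(\isomexthomol\) is an isomorphism.

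I expect the main obstacle to be this last identification step: one must carry the grading shifts and Koszul signs correctly through the chain \(\hom_{\cochain{N}}(P,\cochain{N})\to\hom_{\cochain{N}}(P,\chain{N})\cong\hom_\K(P,\K)\), and verify that the chain-level formula \(p\mapsto\langle\alpha(p),z\rangle\) really descends to \(\langle\exttohom{\alpha}(-),[N]\rangle\) after identifying \(\cohom{P}\) with \(\cohom{X}\). A secondary technical point is the chain-level fundamental cycle: one needs that \(-\cap z\) is a quasi-isomorphism of \(\cochain{N}\)-modules, not merely an isomorphism on cohomology, which is the standard chain-level formulation of Poincaré duality.
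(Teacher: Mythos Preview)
Your argument is correct and is precisely the standard proof from \cite[Lemma~1]{felix-thomas09}; note that the paper itself does not supply a proof of this proposition but simply cites that reference, so there is nothing further to compare. The only minor imprecision is your identification of \(\chain{N}\) with \(\hom_\K(\cochain{N},\K)\): since \(\cochain{N}=\hom_\K(\chain{N},\K)\) by definition, the natural map \(\chain{N}\to\hom_\K(\cochain{N},\K)\) is the double-dual map and is only a quasi-isomorphism (under the standing finite-type hypothesis), but this suffices because \(P\) is semifree.
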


Then we have the following corollary,
which is an analogue of \cref{theorem:extSphereSpace}
for the case of the non-symmetric brane coproduct.

\begin{corollary}
  \label{corollary:extPoincare}
  Consider the same assumption with \cref{proposition:extPoincare}.
  Additionally assume \(l=n\) and \(\cohomdeg=0\).
  Then we have an isomorphism
  \begin{equation}
    \ext^n_{\cochain{N}}(\cochain{X},\cochain{N}) \xrightarrow{\cong} \cohom[n]{N};\quad
    \alpha \mapsto \exttohom{\alpha}(1).
  \end{equation}
\end{corollary}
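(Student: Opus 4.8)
The plan is to read the statement off from \cref{proposition:extPoincare} by specializing the degree. First I would put $l = n$ (equivalently $\cohomdeg = n - l = 0$) in that proposition; it then asserts that
\[
  \isomexthomol\colon
  \ext^n_{\cochain{N}}(\cochain{X},\cochain{N})
  \xrightarrow{\cong}
  \hom_\K(\cohom[0]{X}, \cohom[n]{N}),
  \qquad
  \isomexthomol(\alpha) = \exttohom{\alpha}|_{\cohom[0]{X}},
\]
is an isomorphism. What is left is to identify the target $\hom_\K(\cohom[0]{X}, \cohom[n]{N})$ with $\cohom[n]{N}$ in a way compatible with the formula $\alpha \mapsto \exttohom{\alpha}(1)$.

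For this I would invoke $0$-connectedness of $X$: it gives $\cohom[0]{X} = \K\cdot 1$, a one-dimensional space spanned by the unit class $1$. Hence evaluation at the generator,
\[
  \ev_1\colon \hom_\K(\cohom[0]{X}, \cohom[n]{N}) \xrightarrow{\cong} \cohom[n]{N},
  \qquad \phi \mapsto \phi(1),
\]
is an isomorphism of $\K$-vector spaces. The composite $\ev_1\circ\isomexthomol$ then sends $\alpha$ to $\exttohom{\alpha}|_{\cohom[0]{X}}(1) = \exttohom{\alpha}(1)$, which is exactly the map in the statement; being a composite of two isomorphisms it is itself an isomorphism.

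I do not expect any genuine obstacle here — the corollary is an immediate consequence of \cref{proposition:extPoincare}. The only thing worth spelling out is the canonical identification $\hom_\K(\cohom[0]{X}, \cohom[n]{N}) \cong \cohom[n]{N}$ by evaluation at the unit, which is precisely the point at which the extra hypothesis $\cohomdeg = 0$ — that is, $\cohom[0]{X} \cong \K$ because $X$ is $0$-connected — is used.
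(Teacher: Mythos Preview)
Your proposal is correct and matches the paper's treatment: the paper gives no separate proof for this corollary, treating it as immediate from \cref{proposition:extPoincare}, and your argument (specialize to $l=n$, then use $\cohom[0]{X}=\K\cdot 1$ by $0$-connectedness to identify $\hom_\K(\cohom[0]{X},\cohom[n]{N})$ with $\cohom[n]{N}$ via evaluation at $1$) is exactly the intended deduction.
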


Applying \cref{corollary:extPoincare} to the case \(\maptopd = \inclsd\) and \(n=m\),
we have the generator
\begin{equation}
  \shriek\inclsd
  \in \ext^m_{\cochain{\disksp{k}{M}}}(\cochain{\spheresp[f]{k}{M}},\cochain{\disksp{k}{M}})
  \cong \cohom[m]{\disksp{k}{M}} \cong \K
\end{equation}
up to non-zero scalar multiplication.
Using this element with the diagram \cref{equation:newCopDiagram},
we define
\begin{math}
  \shriek\comp = \exttohom{\extlift{\res}(\shriek\inclsd)}.
\end{math}
This completes the definition of the non-symmetric brane coprdouct.

\newcommand{\sphereact}{\rho}
Next we give more convenient description of \(\shriek\comp\).
Consider the commutative diagram
\begin{equation}
  \label{equation:convenientShriekCompDiagram}
  \begin{tikzcd}[row sep=2.5em]
    & \mapsp[f+g]{T}{M} \ar[dd,"\res" near end] \ar[dl,swap,"="]&&
    \spheresp[f]{k}{M} \times_M \mapsp[g]{T}{M}
    \ar[ll,"\comp"] \ar[dd,"\pr_1"] \ar[dl,"\sphereact","\simeq"'] \ar[r,"\incl"]
    & \spheresp[f]{k}{M} \times \mapsp[g]{T}{M} \\
    \mapsp[f+g]{T}{M} \ar[dd,swap,"\ev_{t_0}"] &&
    \spheresp[f]{k}{M} \times_M \mapsp[f+g]{T}{M} \ar[ll,crossing over,"\pr_2" near start,swap] \\
    & \disksp{k}{M} \ar[dl,swap,"\ev_{d_0}","\simeq"',swap] && \spheresp[f]{k}{M} \ar[ll,"\inclsd" near end] \ar[dl,"="]\\
    M &&
    \spheresp[f]{k}{M}, \ar[ll,"\ev_{s_0}"] \ar[uu,<-,crossing over,swap,"\pr_1" near end]
  \end{tikzcd}
\end{equation}
where the front and back square are pullback squares.
Here
\begin{math}
  \sphereact\colon
  \spheresp[f]{k}{M} \times_M \mapsp[g]{T}{M} \xrightarrow{\simeq}
  \spheresp[f]{k}{M} \times_M \mapsp[f+g]{T}{M}
\end{math}
is defined by
\(\sphereact(\varphi, \psi) = (\varphi, \varphi+\psi)\),
which is well-defined since we are working on the fiber product over \(M\).
By \cref{proposition:naturalityOfShriek},
we have
\begin{math}
  \exttohom{\extlift{\res}(\shriek\inclsd)} \circ \sphereact^*
  = \exttohom{\extlift{\ev_{t_0}}(\tilde{\shriek\inclsd})}
\end{math}
and hence
\begin{equation}
  \label{equation:convenientShriekComp}
  \shriek\comp = \exttohom{\extlift{\ev_{t_0}}(\tilde{\shriek\inclsd})} \circ (\sphereact^*)^{-1}.
\end{equation}
Here
\begin{math}
  \tilde{\shriek\inclsd} \in \ext^m_{\cochain{M}}(\cochain{\spheresp[f]{k}{M}}, \cochain{M})
\end{math}
is the image of \(\shriek\inclsd\)
under the isomorphism induced by \(\ev_{d_0}\).

\section{Computation of the non-symmetric brane coproduct}
\label{section:computeNewCoprod}
\newcommand{\const}{0}

In this section,
we use the same notation and assumptions as in \cref{section:newCoprodConstruction}.
Let
\begin{math}
  \const \in \spheresp{k}{M}
\end{math}
be the constant map and
denote
the orientation class of \(M\) by
\begin{math}
  \omega\in\cohom[m]{M}.
\end{math}
This section is devoted to the proof of
the following formula of the non-symmetric brane coproduct.

\newcommand{\cpleft}{u}
\newcommand{\cpright}{v}
\begin{theorem}
  \label{theorem:computeNewCoprod}
  For the case \(f=\const\in \htpyset{S^k}{M}\),
  the non-symmetric coproduct
  \begin{equation}
    \dcopnew\colon \cohom{\spheresp[\const]{k}{M}\times\mapsp[g]{T}{M}}
    \to \cohom[*+m]{\mapsp[g]{T}{M}}
  \end{equation}
  is described by
  \begin{equation}
    \dcopnew(\cpleft\times\cpright) = \ev_{t_0}^*(\omega\cdot\inclconst^*(\cpleft))\cdot\cpright,
  \end{equation}
  where
  \(\cpleft\times\cpright\)
  denotes the cross product of
  \(\cpleft\in\cohom{\spheresp[\const]{k}{M}}\)
  and
  \(\cpright\in\cohom{\mapsp[g]{T}{M}}\),
  and
  \begin{math}
    \inclconst\colon M \to \spheresp[\const]{k}{M}
  \end{math}
  is the embedding as constant maps.
\end{theorem}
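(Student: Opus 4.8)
The plan is to reduce the computation to the "convenient" description of the shriek map $\shriek\comp$ given in \cref{equation:convenientShriekComp}, and then to analyze what happens when $f = \const$. When $f$ is the constant map, the fiber product $\spheresp[\const]{k}{M} \times_M \mapsp[g]{T}{M}$ splits. Concretely, $\spheresp[\const]{k}{M}$ is the component of constant maps, which deformation retracts onto $M$ via $\ev_{s_0}$ (with section $\inclconst$), so the pullback of $\ev_{s_0}$ along $\ev_{t_0}\colon \mapsp[g]{T}{M} \to M$ is homotopy equivalent to $\mapsp[g]{T}{M}$ itself. Under this identification, the map $\sphereact$ becomes (up to homotopy) a product decomposition, and since $f = \const$ the sum $f + g$ is homotopic to $g$, so the target $\mapsp[f+g]{T}{M}$ is just $\mapsp[g]{T}{M}$.

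First I would make the diagram \cref{equation:convenientShriekCompDiagram} explicit in the case $f = \const$: identify $\spheresp[\const]{k}{M} \times_M \mapsp[g]{T}{M} \simeq M \times_M \mapsp[g]{T}{M} \simeq \mapsp[g]{T}{M}$ and $\spheresp[\const]{k}{M} \simeq M$, so that the pullback square degenerate to
\begin{equation}
  \begin{tikzcd}
    \mapsp[g]{T}{M} \ar[r,"="] \ar[d,"\ev_{t_0}"] & \mapsp[g]{T}{M} \ar[d,"\ev_{t_0}"] \\
    M \ar[r,"="] & M.
  \end{tikzcd}
\end{equation}
By \cref{corollary:extPoincare}, the generator $\tilde{\shriek\inclsd} \in \ext^m_{\cochain{M}}(\cochain{\spheresp[\const]{k}{M}}, \cochain{M})$ corresponds to $\omega \in \cohom[m]{M}$ under $\alpha \mapsto \exttohom{\alpha}(1)$, after identifying $\spheresp[\const]{k}{M}$ with $M$ — this uses that the deformation retraction is compatible with the scalar normalization. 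Then $\extlift{\ev_{t_0}}$ applied to this element, combined with naturality (\cref{proposition:naturalityOfShriek}), shows that $\exttohom{\extlift{\ev_{t_0}}(\tilde{\shriek\inclsd})}\colon \cohom{\mapsp[g]{T}{M}} \to \cohom[*+m]{\mapsp[g]{T}{M}}$ is multiplication by $\ev_{t_0}^*\omega$, up to the scalar fixed in the normalization of $\shriek\inclsd$.

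Next I would trace through the definition $\dcopnew = \shriek\comp \circ \incl^*$. The map $\incl^*$ pulls $u \times v$ back along $\incl\colon \spheresp[\const]{k}{M} \times_M \mapsp[g]{T}{M} \to \spheresp[\const]{k}{M} \times \mapsp[g]{T}{M}$; under the identifications above, $\incl$ becomes $(\ev_{t_0} \times \id) \circ \diag$ (roughly), so $\incl^*(u \times v) = \ev_{t_0}^*(\inclconst^* u) \cdot v$ — here $\inclconst^* u \in \cohom{M}$ because restricting $u \in \cohom{\spheresp[\const]{k}{M}}$ along the retraction is the same as $\inclconst^* u$. Then applying $\shriek\comp$, which by \cref{equation:convenientShriekComp} is $\exttohom{\extlift{\ev_{t_0}}(\tilde{\shriek\inclsd})} \circ (\sphereact^*)^{-1}$ and under these identifications is just multiplication by $\ev_{t_0}^*\omega$, I get
\begin{equation}
  \dcopnew(u \times v) = \ev_{t_0}^*\omega \cdot \ev_{t_0}^*(\inclconst^* u) \cdot v = \ev_{t_0}^*(\omega \cdot \inclconst^* u) \cdot v,
\end{equation}
which is the claimed formula.

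The main obstacle I expect is bookkeeping the homotopy equivalences and the scalar normalization: $\shriek\inclsd$ is only defined up to a nonzero scalar, and I need to check that the "obvious" identifications $\spheresp[\const]{k}{M} \simeq M$, $\sphereact \simeq \mathrm{(product)}$, and $f + g \simeq g$ are mutually compatible so that the scalar comes out to exactly $1$ (i.e.\ the generator really is $\omega$ and not a multiple). This amounts to verifying that, after choosing the deformation retraction so that $\inclsd \colon \spheresp[\const]{k}{M} \to \disksp{k}{M}$ corresponds under $\ev_{d_0}$, $\ev_{s_0}$ to the identity of $M$, the image $\tilde{\shriek\inclsd}$ of the normalized generator $\shriek\inclsd$ is exactly the class with $\exttohom{\cdot}(1) = \omega$; this is essentially the content of \cref{corollary:extPoincare} together with the fact that $\inclconst^* \circ \inclsd^*$ kills nothing, but the diagram-chase through \cref{equation:convenientShriekCompDiagram} needs care. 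The rest (computing $\incl^*$ on a cross product, and identifying the induced map on $\ext$ with cup product by an evaluation class) is routine once the identifications are pinned down.
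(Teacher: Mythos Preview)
Your argument rests on the claim that ``$\spheresp[\const]{k}{M}$ is the component of constant maps, which deformation retracts onto $M$ via $\ev_{s_0}$''. This is false: $\ev_{s_0}\colon \spheresp[\const]{k}{M}\to M$ is a fibration with fibre $\Omega^k M$, and $\inclconst$ is merely a section, not a homotopy inverse. (For example, already for $k=1$ and $M=S^3$ the space $\spheresp[\const]{1}{M}=LS^3$ has infinite-dimensional cohomology.) Consequently the fibre product $\spheresp[\const]{k}{M}\times_M\mapsp[g]{T}{M}$ does \emph{not} collapse to $\mapsp[g]{T}{M}$, the identification of $\incl^*(u\times v)$ with $\ev_{t_0}^*(\inclconst^*u)\cdot v$ is unjustified, and the description of $\exttohom{\extlift{\ev_{t_0}}(\tilde{\shriek\inclsd})}$ as ``multiplication by $\ev_{t_0}^*\omega$'' on $\cohom{\mapsp[g]{T}{M}}$ does not make sense as stated, since its source is $\cohom{\spheresp[\const]{k}{M}\times_M\mapsp[g]{T}{M}}$.

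The paper avoids this collapse entirely. It keeps the fibre product as is and instead proves two auxiliary facts: (i) the difference $(\sphereact^*)^{-1}x-x$ always lies in $\ker(\sectfiberprod^*)$, where $\sectfiberprod(\psi)=(\inclconst(\psi(t_0)),\psi)$ is the section of $\pr_2$ (\cref{proposition:computeSphereAction}); and (ii) $\ker(\sectfiberprod^*)\subset\ker\bigl(\exttohom{\extlift{\ev_{t_0}}(\tilde{\shriek\inclsd})}\bigr)$, which is deduced from \cref{proposition:kerSubsetKer} by observing that $\tilde{\shriek\inclsd}$ lifts through the section $\inclconst$ via \cref{corollary:extPoincare}. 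These two facts let one drop $(\sphereact^*)^{-1}$. Finally, the composite $\exttohom{\extlift{\ev_{t_0}}(\tilde{\shriek\inclsd})}\circ\incl^*$ is evaluated on $u\times v$ by a general naturality computation (\cref{proposition:computeOnCrossProduct}), yielding $\ev_{t_0}^*\bigl(\exttohom{\tilde{\shriek\inclsd}}(u)\bigr)\cdot v=\ev_{t_0}^*(\omega\cdot\inclconst^*u)\cdot v$. The point is that only the section $\inclconst$ is used, never a nonexistent retraction.
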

\todo{\(\cpleft\) と \(\cpright\) の記号が被ってないか確認}

This is an analogue
of 
\cite[Theorem 30]{menichi13}
in the case of the non-symmetric coproduct.
Note that, when \(\chara\K=0\),
the above formula can be proved easily
by using rational models of mapping spaces given in \cite{berglund15}.

\newcommand{\sectfiberprod}{\sigma}
To prove \cref{theorem:computeNewCoprod},
we need some propositions.
First we investigate the map
\((\sphereact^*)^{-1}\)
in \cref{equation:convenientShriekComp}.
Define
\begin{math}
  \sectfiberprod\colon \mapsp[g]{T}{M}
  \to \spheresp[0]{k}{M} \times_M \mapsp[g]{T}{M}
\end{math}
by
\(\sectfiberprod(\psi) = (\inclconst(\psi(t_0)), \psi)\).

\newcommand{\elmFiberProd}{x}
\begin{proposition}
  \label{proposition:computeSphereAction}
  For any \(\elmFiberProd \in \cohom{\spheresp[0]{k}{M} \times_M \mapsp[g]{T}{M}}\),
  we have
  \begin{equation}
    (\sphereact^*)^{-1}\elmFiberProd - \elmFiberProd
    \in \ker(\sectfiberprod^*).
  \end{equation}
\end{proposition}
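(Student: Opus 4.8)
The plan is to reduce the statement to the homotopy relation $\sphereact\circ\sectfiberprod\simeq\sectfiberprod$, and then to verify that relation by observing that adjoining a constant sphere does not change a map up to homotopy.

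First I would carry out the reduction. Since $\const+g\simeq g$, for $f=\const$ the map $\sphereact$ is a self-homotopy-equivalence of $\spheresp[\const]{k}{M}\times_M\mapsp[g]{T}{M}$, so $\sphereact^*$ is an isomorphism. Hence the asserted inclusion $(\sphereact^*)^{-1}\elmFiberProd-\elmFiberProd\in\ker(\sectfiberprod^*)$ for every $\elmFiberProd$ is equivalent to $\sectfiberprod^*\circ(\sphereact^*)^{-1}=\sectfiberprod^*$, and, after composing with $\sphereact^*$, to $(\sphereact\circ\sectfiberprod)^*=\sectfiberprod^*$. Thus it suffices to show that $\sphereact\circ\sectfiberprod$ and $\sectfiberprod$ are homotopic as maps $\mapsp[g]{T}{M}\to\spheresp[\const]{k}{M}\times_M\mapsp[g]{T}{M}$.

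Next I would compare the two maps coordinate-wise. Writing $c_x=\inclconst(x)\in\spheresp[\const]{k}{M}$ for the constant sphere at $x\in M$, we have $\sectfiberprod(\psi)=(c_{\psi(t_0)},\psi)$ and $(\sphereact\circ\sectfiberprod)(\psi)=\sphereact(c_{\psi(t_0)},\psi)=(c_{\psi(t_0)},\,c_{\psi(t_0)}+\psi)$, so the two maps agree in the first coordinate. For the second coordinate, when forming the sum $c_{\psi(t_0)}+\psi$ one may take the auxiliary map $f'$ of the definition to be $c_{\psi(t_0)}$ itself, since it already satisfies $f'(s_0)=\psi(t_0)$; so $c_{\psi(t_0)}+\psi$ is the composite $T\xrightarrow{\wedgequot}S^k\vee T\xrightarrow{c_{\psi(t_0)}\vee\psi}M$. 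As $c_{\psi(t_0)}\vee\psi$ factors as $\psi$ composed with the map $S^k\vee T\to T$ collapsing the $S^k$-summand to the wedge point, we get $c_{\psi(t_0)}+\psi=\psi\circ\kappa$, where $\kappa\colon T\to T$ is the composite of $\wedgequot$ with this collapse. By the standard properties of the pinch map, $\kappa\simeq\id_T$, and the homotopy can be taken to fix $t_0$.

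Finally I would assemble these observations. Choosing a homotopy $H\colon T\times[0,1]\to T$ with $H_0=\id_T$, $H_1=\kappa$ and $H_s(t_0)=t_0$ for all $s$, the assignment $(\psi,s)\mapsto(c_{\psi(t_0)},\,\psi\circ H_s)$ defines a map $\mapsp[g]{T}{M}\times[0,1]\to\spheresp[\const]{k}{M}\times_M\mapsp[g]{T}{M}$ — it does land in the fiber product, since $\ev_{s_0}(c_{\psi(t_0)})=\psi(t_0)=\psi(H_s(t_0))=\ev_{t_0}(\psi\circ H_s)$ — and it restricts to $\sectfiberprod$ at $s=0$ and to $\sphereact\circ\sectfiberprod$ at $s=1$, giving the required homotopy. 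The step requiring care is exactly this last one: one must produce the pinch-and-collapse homotopy $\kappa\simeq\id_T$ rel $t_0$ and check that it is compatible with the point-set models of $\wedgequot$ and of the fiber product fixed in \cref{section:newCoprodConstruction}; everything else is formal.
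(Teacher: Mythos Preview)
Your argument is correct and follows the same route as the paper: both reduce the claim to the homotopy $\sphereact\circ\sectfiberprod\simeq\sectfiberprod$ and then use it to conclude $\sectfiberprod^*\circ(\sphereact^*)^{-1}=\sectfiberprod^*$. The paper simply asserts $\sphereact\circ\sectfiberprod\simeq\sectfiberprod$ in one line, whereas you spell out the verification via the pinch-and-collapse homotopy $\kappa\simeq\id_T$; this extra detail is fine and fills in exactly what the paper leaves implicit.
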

\begin{proof}
  Let \(\bar\sphereact\) be the homotopy inverse of \(\sphereact\).
  Then we have \(\sphereact\circ\sectfiberprod \simeq \sectfiberprod\) and hence
  \begin{math}
    \sectfiberprod^*((\sphereact^*)^{-1}\elmFiberProd - \elmFiberProd)
    = \sectfiberprod^*(\bar\sphereact^*\elmFiberProd - \elmFiberProd)
    = \sectfiberprod^*\elmFiberProd - \sectfiberprod^*\elmFiberProd
    = 0.
  \end{math}
\end{proof}

Next we relate
\(\ker(\sectfiberprod^*)\)
with
\(\exttohom{{\extlift{\ev_{t_0}}\tilde{\shriek\inclsd}}}\).

\begin{proposition}
  \label{proposition:kerSubsetKer}
  Consider a pullback diagram
  \begin{equation}
    \begin{tikzcd}
      E \ar[d,"p"] & X \ar[l,"g"] \ar[d,"q"]\\
      B & A \ar[l,"f"]
    \end{tikzcd}
  \end{equation}
  such that the Eilenberg-Moore map is an isomorphism,
  and take an element
  \begin{math}
    \alpha \in \ext_{\cochain{B}}(\cochain{A}, \cochain{B}).
  \end{math}
  Let
  \(\sigma\colon E\to X\)
  and
  \(\tau\colon B\to A\)
  be sections of \(g\) and \(f\), respectively,
  satisfying \(q\circ\sigma = \tau\circ p\).
  Assume that
  there is an element
  \begin{math}
    \tilde\alpha \in \ext_{\cochain{B}}(\cochain{B}, \cochain{B})
  \end{math}
  which is mapped to \(\alpha\) by the map induced by \(\tau\).
  Then
  \begin{equation}
    \ker(\sectfiberprod^*) \subset \ker(\exttohom{\extlift{p}\alpha}).
  \end{equation}
\end{proposition}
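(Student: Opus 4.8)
The plan is to obtain this as a formal consequence of the naturality of the shriek construction, \cref{proposition:naturalityOfShriek}, by comparing the given square with the degenerate pullback square sitting over the identity of $B$. Since $\ext$ and the map $\exttohom{(-)}$ are graded, I may reduce to the case where $\alpha$ is homogeneous, say $\alpha\in\ext^m_{\cochain{B}}(\cochain{A},\cochain{B})$; then $\tilde\alpha\in\ext^m_{\cochain{B}}(\cochain{B},\cochain{B})$ as well, and the general case follows because the kernels of the homogeneous pieces intersect to the kernel of $\exttohom{\extlift{p}\alpha}$.

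First I would introduce the degenerate pullback square
\[
  \begin{tikzcd}
    E \ar[d,"p"] & E \ar[l,"\id"] \ar[d,"p"] \\
    B & B \ar[l,"\id"]
  \end{tikzcd}
\]
(here $p$ is a fibration over the $1$-connected $B$, as in the setup defining $\extlift{p}$), whose Eilenberg--Moore map is an isomorphism --- in fact the identity of $\cochain{E}$, using $\cochain{B}$ itself as a semifree resolution. For this square, $\extlift{p}$ sends $\tilde\alpha$ to an element $\extlift{p}\tilde\alpha\in\ext^m_{\cochain{E}}(\cochain{E},\cochain{E})$, hence a map $\exttohom{\extlift{p}\tilde\alpha}\colon\cohom{E}\to\cohom[*+m]{E}$.

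Next I would assemble the cube required by \cref{proposition:naturalityOfShriek}: in front the given pullback square $E\xleftarrow{g}X$, $B\xleftarrow{f}A$ carrying $\alpha$, in back the degenerate square carrying $\tilde\alpha$, and as vertical maps $\id_E$, $\sigma$, $\id_B$, $\tau$ (playing the roles of $\varphi,\psi,a,b$). All six faces commute: the two faces over $B$ trivially, the top and bottom faces because $\sigma$ and $\tau$ are sections of $g$ and $f$, and the remaining lateral face because $q\circ\sigma=\tau\circ p$. The compatibility hypothesis of \cref{proposition:naturalityOfShriek} --- that $\alpha$ and $\tilde\alpha$ have the same image in $\ext^m_{\cochain{B}}(\cochain{A},\cochain{B})$ under the morphisms induced by $\id_B$ and $\tau$ --- unwinds precisely to the standing assumption that $\tilde\alpha$ maps to $\alpha$ under (the morphism induced by) $\tau$, since the morphism induced by $\id_B$ is the identity. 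Both Eilenberg--Moore maps are isomorphisms: the front one by hypothesis, the back one as just noted. Thus \cref{proposition:naturalityOfShriek} yields the commutative square
\[
  \begin{tikzcd}[column sep=huge]
    \cohom{X} \ar[r,"\exttohom{\extlift{p}\alpha}"] \ar[d,"\sigma^*"] & \cohom[*+m]{E} \ar[d,"="] \\
    \cohom{E} \ar[r,"\exttohom{\extlift{p}\tilde\alpha}"] & \cohom[*+m]{E}
  \end{tikzcd}
\]
that is, $\exttohom{\extlift{p}\alpha}=\exttohom{\extlift{p}\tilde\alpha}\circ\sigma^*$. Hence any $x\in\ker(\sigma^*)$ satisfies $\exttohom{\extlift{p}\alpha}(x)=\exttohom{\extlift{p}\tilde\alpha}(0)=0$, which is the asserted inclusion.

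The argument is entirely formal once the cube is in place; the only step needing genuine care is the last point of the third paragraph --- verifying that the abstract compatibility hypothesis of \cref{proposition:naturalityOfShriek} really does reduce to the given hypothesis on $\tilde\alpha$, which amounts to tracking the extension- and restriction-of-scalars maps on $\ext$ carefully through the identifications $E'=E$, $X'=E$, $B'=B$, $A'=B$. I expect that bookkeeping to be the main (and essentially the only) obstacle.
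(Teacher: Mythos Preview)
Your proof is correct and follows essentially the same route as the paper: apply \cref{proposition:naturalityOfShriek} to the cube whose back face is the degenerate pullback square $E\xleftarrow{\id}E$, $B\xleftarrow{\id}B$ (carrying $\tilde\alpha$) and whose front face is the given square (carrying $\alpha$), with connecting maps $\id_E$, $\sigma$, $\id_B$, $\tau$, to obtain the factorization $\exttohom{\extlift{p}\alpha}=\exttohom{\extlift{p}\tilde\alpha}\circ\sigma^*$. One small quibble: the parenthetical ``using $\cochain{B}$ itself as a semifree resolution'' is not quite right, since $\cochain{B}$ need not be semifree over itself, but the conclusion that the Eilenberg--Moore map for the degenerate square is a quasi-isomorphism is of course correct and immediate.
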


\begin{proof}
  Applying \cref{proposition:naturalityOfShriek} to the following diagram,
  we have
  \begin{math}
    \exttohom{\extlift{p}\alpha} = \exttohom{\extlift{p}\tilde\alpha} \circ \sectfiberprod^*,
  \end{math}
  and this proves the proposition.
  \begin{equation}
    \begin{tikzcd}[row sep=2.5em]
      & E \ar[dd,"p" near end] \ar[dl,swap,"="]&&
      E \ar[ll,"="] \ar[dd,"p"] \ar[dl,"\sigma"]\\
      E \ar[dd,swap,"p"] &&
      X \ar[ll,crossing over,"g" near start,swap] \\
      & B \ar[dl,swap,"=",swap] && B \ar[ll,"=" near end] \ar[dl,"\tau"]\\
      B &&
      A \ar[ll,"f"] \ar[uu,<-,crossing over,swap,"q" near end]
    \end{tikzcd}
  \end{equation}
\end{proof}

Next, we consider the diagram
\begin{equation}
  \begin{tikzcd}
    \mapsp[g]{T}{M} \ar[d,swap,"\ev_{t_0}"] &
    \spheresp[\const]{k}{M} \times_M \mapsp[g]{T}{M} \ar[l,"\pr_2",swap] \\
    M &
    \spheresp[\const]{k}{M}. \ar[l,"\ev_{s_0}"] \ar[u,<-,swap,"\pr_1"]
  \end{tikzcd}
\end{equation}
Note that the maps
\begin{math}
  \sectfiberprod\colon \mapsp[g]{T}{M}
  \to \spheresp[0]{k}{M} \times_M \mapsp[g]{T}{M}
\end{math}
and
\(\inclconst\colon M \to \spheresp[\const]{k}{M}\),
are sections of \(\pr_2\) and \(\ev_{s_0}\), respectively.
Recall from \cref{equation:convenientShriekComp} that
we are using
\begin{math}
  \tilde{\shriek\inclsd} \in \ext^m_{\cochain{M}}(\cochain{\spheresp[\const]{k}{M}}, \cochain{M})
\end{math}
to compute the non-symmetric brane coproduct.
\begin{corollary}
  \label{corollary:kerSubsetKer}
  Under the above notation, we have
  \begin{equation}
    \ker(\sectfiberprod^*) \subset \ker\big(\exttohom{\extlift{\ev_{t_0}}(\tilde{\shriek\inclsd})}\big).
  \end{equation}
\end{corollary}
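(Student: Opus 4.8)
The plan is to obtain the inclusion from \cref{proposition:kerSubsetKer}, applied to the pullback square above with
  \(E = \mapsp[g]{T}{M}\), \(B = M\), \(A = \spheresp[\const]{k}{M}\), \(X = \spheresp[\const]{k}{M}\times_M\mapsp[g]{T}{M}\),
  \(p = \ev_{t_0}\), \(f = \ev_{s_0}\), \(g = \pr_2\), \(q = \pr_1\), and \(\alpha = \tilde{\shriek\inclsd}\in\ext^m_{\cochain{M}}(\cochain{\spheresp[\const]{k}{M}},\cochain{M})\).
  First I would record that \(\sectfiberprod\) and \(\inclconst\) are sections of \(\pr_2\) and \(\ev_{s_0}\) respectively, and that they are compatible, \(\pr_1\circ\sectfiberprod = \inclconst\circ\ev_{t_0}\), which is immediate from the formula \(\sectfiberprod(\psi) = (\inclconst(\psi(t_0)),\psi)\).
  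Moreover the Eilenberg--Moore map of the square is an isomorphism, since \(\ev_{s_0}\) is a fibration and its base \(M\) is \(1\)-connected, so the Eilenberg--Moore theorem recalled in \cref{section:extBasic} applies.

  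The only hypothesis of \cref{proposition:kerSubsetKer} requiring a genuine argument is the existence of \(\tilde\alpha\in\ext^m_{\cochain{M}}(\cochain{M},\cochain{M})\) that is sent to \(\tilde{\shriek\inclsd}\) by the homomorphism \(\ext^m_{\cochain{M}}(\cochain{M},\cochain{M}) \to \ext^m_{\cochain{M}}(\cochain{\spheresp[\const]{k}{M}},\cochain{M})\) induced by \(\inclconst\).
  For this I would apply \cref{corollary:extPoincare} twice with \(N = M\): once to the identity \(\id_M\), giving an isomorphism \(\ext^m_{\cochain{M}}(\cochain{M},\cochain{M}) \xrightarrow{\cong} \cohom[m]{M}\), \(\beta\mapsto\exttohom{\beta}(1)\); and once to \(\ev_{s_0}\colon\spheresp[\const]{k}{M}\to M\) (permissible since \(\spheresp[\const]{k}{M}\) is a single path component, hence \(0\)-connected), giving an isomorphism \(\ext^m_{\cochain{M}}(\cochain{\spheresp[\const]{k}{M}},\cochain{M}) \xrightarrow{\cong} \cohom[m]{M}\), \(\beta\mapsto\exttohom{\beta}(1)\).
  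A direct consequence of the definitions is that the map induced by \(\inclconst\) sends \(\beta\) to an element \(\beta'\) with \(\exttohom{\beta'} = \exttohom{\beta}\circ\inclconst^*\); since \(\inclconst^*\) preserves the unit, the two isomorphisms intertwine the map induced by \(\inclconst\) with \(\id_{\cohom[m]{M}}\).
  Hence that map is an isomorphism, and I would take \(\tilde\alpha\) to be the preimage of \(\tilde{\shriek\inclsd}\).

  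With all hypotheses in place, \cref{proposition:kerSubsetKer} yields \(\ker(\sectfiberprod^*) \subset \ker\big(\exttohom{\extlift{\ev_{t_0}}(\tilde{\shriek\inclsd})}\big)\), which is the assertion.
  I expect the only delicate point to be the bookkeeping: matching the present square, its two sections, and the element \(\tilde{\shriek\inclsd}\) (defined in \cref{section:newCoprodConstruction} through the homotopy equivalence \(\ev_{d_0}\)) against the abstract data of \cref{proposition:kerSubsetKer}, together with checking that the naturality square for \(\beta\mapsto\exttohom{\beta}(1)\) used above indeed commutes.
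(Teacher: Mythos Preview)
Your proof is correct and follows essentially the same approach as the paper: both verify the hypotheses of \cref{proposition:kerSubsetKer} by using \cref{corollary:extPoincare} to show that \(\inclconst\) induces an isomorphism \(\ext^m_{\cochain{M}}(\cochain{M},\cochain{M})\xrightarrow{\cong}\ext^m_{\cochain{M}}(\cochain{\spheresp[\const]{k}{M}},\cochain{M})\), thereby producing the required \(\tilde\alpha\). Your version is simply more explicit about the section compatibility \(\pr_1\circ\sectfiberprod=\inclconst\circ\ev_{t_0}\), the Eilenberg--Moore hypothesis, and the naturality underlying the isomorphism, all of which the paper leaves implicit.
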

\begin{proof}
  By \cref{corollary:extPoincare},
  the map \(\inclconst\) induces an isomorphism
  \begin{equation}
    \ext^m_{\cochain{M}}(\cochain{M},\cochain{M})
    \xrightarrow{\cong}
    \ext^m_{\cochain{M}}(\cochain{\spheresp[\const]{k}{M}},\cochain{M}).
  \end{equation}
  Thus we obtain \(\tilde\alpha\) as in the assumption of \cref{proposition:kerSubsetKer},
  and hence it proves the corollary.
\end{proof}

By \cref{proposition:computeSphereAction} and \cref{corollary:kerSubsetKer},
\cref{theorem:computeNewCoprod} reduces to the following simple proposition.

\begin{proposition}
  \label{proposition:computeOnCrossProduct}
  Consider a pullback diagram
  \begin{equation}
    \begin{tikzcd}
      E \ar[d,"p"] & X \ar[l,"g"] \ar[d,"q"]\\
      B & A \ar[l,"f"]
    \end{tikzcd}
  \end{equation}
  such that the Eilenberg-Moore map is an isomorphism,
  and an element
  \begin{math}
    \alpha \in \ext_{\cochain{B}}(\cochain{A}, \cochain{B}).
  \end{math}
  Then the composition
  \begin{math}
    \cohom{A\times E} \xrightarrow{\incl^*}
    \cohom{X} \xrightarrow{\exttohom{\extlift{p}\alpha}}
    \cohom{E}
  \end{math}
  satisfies
  \begin{equation}
    \exttohom{\extlift{p}\alpha} \circ \incl^* (\cpleft\times\cpright)
    = p^*(\exttohom{\alpha}(\cpleft))\cdot\cpright
  \end{equation}
  for any
  \(\cpleft\in\cohom{A}\)
  and
  \(\cpright\in\cohom{E}\).
\end{proposition}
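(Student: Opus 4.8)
Since $\incl\colon X\to A\times E$ is the map with components $q$ and $g$, we have $\incl^{*}(u\times v)=q^{*}u\cdot g^{*}v$ in $\cohom X$, so the statement is equivalent to
\[
  \exttohom{\extlift p\alpha}\bigl(q^{*}u\cdot g^{*}v\bigr)=p^{*}\bigl(\exttohom\alpha(u)\bigr)\cdot v .
\]
The plan is to reduce this to two facts: \emph{(a)} $\exttohom{\extlift p\alpha}\circ q^{*}=p^{*}\circ\exttohom\alpha$ as maps $\cohom A\to\cohom E$; and \emph{(b)} $\exttohom{\extlift p\alpha}\colon\cohom X\to\cohom E$ is a homomorphism of $\cohom E$-modules, where $\cohom X$ carries the $\cohom E$-module structure induced by $g^{*}$. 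Granting these, one rewrites $q^{*}u\cdot g^{*}v$ as $g^{*}v\cdot q^{*}u$ by graded commutativity, extracts the factor $g^{*}v$ using (b), and finishes with (a); a short bookkeeping of Koszul signs then yields the displayed formula.

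For \emph{(a)} I would invoke \cref{proposition:naturalityOfShriek}, comparing the given pullback square with the trivial pullback square both of whose vertical maps are the identity. The corner of this trivial square is $A$, and its Eilenberg--Moore map is the augmentation $\varepsilon\colon P\xrightarrow{\simeq}\cochain A$ of a semifree resolution $P$ of $\cochain A$ over $\cochain B$, hence an isomorphism on cohomology; moreover $\extlift{\id}\alpha=\alpha$ and $\exttohom{\extlift{\id}\alpha}=\exttohom\alpha\colon\cohom A\to\cohom B$ directly from the definitions in \cref{section:extBasic}. The maps relating the two squares are $p\colon E\to B$ and $q\colon X\to A$, the element $\alpha$ is carried to itself by the induced maps, and the Eilenberg--Moore maps of both squares are isomorphisms (one by hypothesis, the other as just noted), so all hypotheses of \cref{proposition:naturalityOfShriek} are met; its conclusion is precisely the commutativity claimed in (a). Alternatively, (a) is a short direct check: with $\varphi\colon P\to\cochain B$ a cocycle representing $\alpha$, the chain map $P\to\cochain E\otimes_{\cochain B}P\to\cochain X$, $\xi\mapsto 1\otimes\xi$, realizes $q^{*}$ on cohomology, and composing it with $\id_{\cochain E}\otimes\varphi$ sends $1\otimes\xi$ to $p^{*}(\varphi(\xi))$.

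For \emph{(b)}, recall from \cref{section:extBasic} that $\exttohom{\extlift p\alpha}$ is the composite
\[
  \cohom X\xleftarrow{\cong}\cohom{\cochain E\otimes_{\cochain B}P}\xrightarrow{\cohom{\id_{\cochain E}\otimes\varphi}}\cohom E ,
\]
where the first arrow is the inverse of the Eilenberg--Moore isomorphism and the second is induced by $\id_{\cochain E}\otimes\varphi\colon\cochain E\otimes_{\cochain B}P\to\cochain E\otimes_{\cochain B}\cochain B=\cochain E$. The map $\id_{\cochain E}\otimes\varphi$ is $\cochain E$-linear by construction, and the Eilenberg--Moore comparison map $\cochain E\otimes_{\cochain B}P\to\cochain X$ is a morphism of $\cochain E$-modules for the structure on $\cochain X$ coming from $\cochain g$; hence on cohomology $\exttohom{\extlift p\alpha}$ is $\cohom E$-linear with respect to the $g^{*}$-module structure on $\cohom X$, which is (b).

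I expect the only delicate point to be the $\cochain E$-linearity of the Eilenberg--Moore comparison map used in (b): this is a standard feature of the comparison map (and even if it held only up to a $\cochain E$-linear homotopy it would still give the required $\cohom E$-linearity on cohomology), but it is the one place where the construction, rather than a purely formal argument, must be consulted. Everything else is either formal manipulation of cup products or a direct application of \cref{proposition:naturalityOfShriek}. When $\chara\K=0$ one can bypass (b) entirely by taking $P$ to be a relative Sullivan algebra, so that $\cochain E\otimes_{\cochain B}P$ is a dga: then $q^{*}u$ lifts to $1\otimes\tilde u$ and $g^{*}v$ lifts to $\tilde v\otimes 1$, and multiplying these and applying $\id_{\cochain E}\otimes\varphi$ gives the formula directly.
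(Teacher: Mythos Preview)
Your proof is correct. Both (a) and (b) hold for the reasons you give, and the sign bookkeeping works out. However, the paper organizes the argument differently: instead of staying inside the given pullback and invoking the $\cohom{E}$-linearity of $\exttohom{\extlift{p}\alpha}$, it introduces an \emph{auxiliary} pullback square
\[
  \begin{tikzcd}
    B\times E \ar[d,"\pr_1"] & A\times E \ar[l,"f\times\id"'] \ar[d,"\pr_1"]\\
    B & A \ar[l,"f"']
  \end{tikzcd}
\]
and applies \cref{proposition:naturalityOfShriek} once, to the cube comparing this product square with the original one via $(p,\id)$ and $(q,g)=\incl$. This yields $\exttohom{\extlift{p}\alpha}\circ\incl^{*}=(p,\id)^{*}\circ\exttohom{\extlift{\pr_1}\alpha}$, and the remaining identity $\exttohom{\extlift{\pr_1}\alpha}(u\times v)=\exttohom{\alpha}(u)\times v$ is then a direct computation in the very simple fibration $\pr_1$. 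In effect the paper packages your facts (a) and (b) into a single naturality step followed by a computation in a product, whereas you unbundle them: (a) via naturality down to the base, (b) via the $\cochain{E}$-linearity of the Eilenberg--Moore comparison. Your route is slightly more hands-on with the comparison map; the paper's route trades that for introducing an extra square but keeps the final computation in the most transparent setting.
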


\begin{proof}
  Consider the diagram
  \begin{equation}
    \begin{tikzcd}[row sep=2.5em]
      & E \ar[dd,"p" near end] \ar[dl,swap,"{(p,\id)}"]&&
      X \ar[ll,"g"] \ar[dd,"q"] \ar[dl,"{(q,g)}" near end,swap]\\
      B\times E \ar[dd,swap,"\pr_1"] &&
      A\times E \ar[ll,crossing over,"f\times\id" near start,swap] \\
      & B \ar[dl,swap,"=",swap] && A \ar[ll,"f" near end] \ar[dl,"="]\\
      B &&
      A. \ar[ll,"f"] \ar[uu,<-,crossing over,swap,"\pr_1" near end]
    \end{tikzcd}
  \end{equation}
  By \cref{proposition:naturalityOfShriek}, we have
  \begin{equation}
    \exttohom{\extlift{p}\alpha} \circ \incl^*
    = (p,\id)^*\circ(\exttohom{\extlift{\pr_1}\alpha}).
  \end{equation}
  Since the fibration \(\pr_1\) is very simple,
  we can prove
  \begin{equation}
    \exttohom{\extlift{\pr_1}\alpha}(\cpleft\times\cpright)
    = \exttohom{\alpha}(\cpleft)\times\cpright
  \end{equation}
  by a direct computation from the definition.
\end{proof}

Now we give a proof of \cref{theorem:computeNewCoprod}
using the above corollary and propositions.

\begin{proof}[Proof of \cref{theorem:computeNewCoprod}]
  By \cref{equation:convenientShriekComp},
  we have
  \begin{equation}
    \dcopnew(\cpleft\times\cpright) =
    \exttohom{\extlift{\ev_{t_0}}(\tilde{\shriek\inclsd})} \circ (\sphereact^*)^{-1}
    \circ \incl^* (\cpleft\times\cpright).
  \end{equation}
  By \cref{proposition:computeSphereAction} and \cref{corollary:kerSubsetKer},
  we have
  \begin{equation}
    \exttohom{\extlift{\ev_{t_0}}(\tilde{\shriek\inclsd})} \circ (\sphereact^*)^{-1}
    = \exttohom{\extlift{\ev_{t_0}}(\tilde{\shriek\inclsd})}.
  \end{equation}
  Thus
  \begin{equation}
    \dcopnew(\cpleft\times\cpright)
    = \exttohom{\extlift{\ev_{t_0}}(\tilde{\shriek\inclsd})} \circ \incl^* (\cpleft\times\cpright),
  \end{equation}
  and hence \cref{proposition:computeOnCrossProduct} proves the theorem.
\end{proof}

\section{Comparison of two brane coproducts}
\label{section:comparison}
In this section,
we compare the two brane coproducts.
As an application,
we prove \cref{theorem:vanishingMainTheorem}.

\subsection{Proof of \cref{theorem:vanishingGeneral}}
In this subsection,
we prove \cref{theorem:vanishingGeneral}.

Let \(\K\) be a field of any characteristic,
\(k\) a positive integer,
and \(M\) a \(k\)-connected Poincaré duality space of dimension \(m\).
We fix an arbitrary element
\begin{equation}
  \myshriek\in
  \ext^{m}_{\cochain{\spheresp{k-1}{M}}}(\cochain{M}, \cochain{\spheresp{k-1}{M}}).
\end{equation}
Then we have the brane coproduct
\begin{equation}
  \dcop_\myshriek\colon
  \cohom{\spheresp{k}{M}\times\spheresp{k}{M}}
  \to \cohom[*+m]{\spheresp{k}{M}}
\end{equation}
for the case \(S = T = S^k\)
by the construction given in \cref{section:reviewPreviousBraneCoproduct}.

\begin{remark}
  The degree \(m\) of the element \(\myshriek\) is different from
  the degree \(\bar{m}\) of \(\shriek\inclconst\) in \cref{theorem:extSphereSpace}.
  These degrees coincide
  under the assumption
  \cref{item:assump_brane} of \cref{theorem:vanishingMainTheorem}
  (see \cref{remark:assumpIsNatural}).
  This case will be treated in \cref{subsection:comparison_higher}
  and \cref{section:modelOfShriek}.
\end{remark}

To compare \(\dcop_\myshriek\) with \(\dcopnew\),
we relate \(\myshriek\) with \(\shriek\inclsd\).
As in \cref{theorem:vanishingGeneral},
define \(\lambda_\myshriek\in \K\) by the equation
\begin{equation}
  \inclconst^*\circ(\exttohom{\myshriek})(1) = \lambda_\myshriek \omega
  \in \cohom[m]{M},
\end{equation}
where \(\omega\) is the orientation class of \(M\).

\begin{proposition}
  \label{proposition:compareShriek}
  Under the above notation,
  we have
  \begin{equation}
    \extlift{\res}(\myshriek) = \lambda_\myshriek \shriek\inclsd
    \in
    \ext^m_{\cochain{\disksp{k}{M}}}(\cochain{\spheresp{k}{M}}, \cochain{\disksp{k}{M}}),
  \end{equation}
  where \(\extlift{\res}\) is the lift
  along the lower pullback square in \cref{equation:relationDiagram}.
  Moreover, this implies
  \begin{equation}
    \dcop_\myshriek = \lambda_\myshriek \dcopnew\colon
    \cohom{\spheresp{k}{M}\times\spheresp{k}{M}}
    \to \cohom{\spheresp{k}{M}}.
  \end{equation}
\end{proposition}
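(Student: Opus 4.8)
The plan is to prove the identity at the level of $\ext$ first, and then deduce the statement about coproducts by functoriality. For the $\ext$-level identity, I would apply \cref{corollary:extPoincare} with $\maptopd = \inclsd\colon \disksp{k}{M}\to\disksp{k}{M}$, wait—more carefully, with $\maptopd$ the composite $\spheresp{k}{M}\xrightarrow{\inclsd}\disksp{k}{M}$, so that the relevant isomorphism is
\begin{math}
  \ext^m_{\cochain{\disksp{k}{M}}}(\cochain{\spheresp{k}{M}},\cochain{\disksp{k}{M}})
  \xrightarrow{\cong}\cohom[m]{\disksp{k}{M}},\quad
  \beta\mapsto\exttohom{\beta}(1).
\end{math}
Since both $\extlift{\res}(\myshriek)$ and $\shriek\inclsd$ lie in this one-dimensional space, it suffices to check that they have the same image under this isomorphism. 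By definition $\shriek\inclsd$ maps to a chosen generator (the orientation class $\omega$ of $\disksp{k}{M}\simeq M$), so I must show that $\exttohom{\extlift{\res}(\myshriek)}(1) = \lambda_\myshriek\omega$ under the identification $\cohom[m]{\disksp{k}{M}}\cong\cohom[m]{M}$.

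The key step is then to compute $\exttohom{\extlift{\res}(\myshriek)}(1)$. Here I would use the naturality of the $\extlift{(-)}$ construction together with the lower pullback square of \cref{equation:relationDiagram}, which has $\res\colon\disksp{k}{M}\to\spheresp{k-1}{M}$ along the left, $\inclconst\colon M\to\spheresp{k-1}{M}$ along the bottom, $\ev\colon\spheresp{k}{M}\to M$, and $\inclsd$ on top. Applying \cref{proposition:naturalityOfShriek} (or directly the definition of $\extlift{\res}$) to the square relating $M\hookrightarrow\spheresp{k-1}{M}$ to its pullback, I get a commuting square relating $\exttohom{\myshriek}\colon\cohom{M}\to\cohom{\spheresp{k-1}{M}}$ to $\exttohom{\extlift{\res}(\myshriek)}\colon\cohom{\spheresp{k}{M}}\to\cohom{\disksp{k}{M}}$ via the restriction maps, and in particular, chasing $1\in\cohom[0]{M}$ around, using that $\res^*$ on $\cohom[m]{\spheresp{k-1}{M}}$ corresponds to $\inclconst^*$ followed by the canonical identification (since $\res\circ\inclsd$ factors appropriately and $\disksp{k}{M}\simeq M$ via $\ev_{d_0}$). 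This yields $\exttohom{\extlift{\res}(\myshriek)}(1) = \inclconst^*(\exttohom{\myshriek}(1)) = \lambda_\myshriek\omega$, as desired. I expect this diagram chase — getting all the identifications $\disksp{k}{M}\simeq M$, $\ev_{d_0}$, $\inclsd$, $\res$ to line up consistently with the Eilenberg--Moore isomorphism hypotheses — to be the main technical obstacle; one must verify that the Eilenberg--Moore maps for the relevant pullbacks are isomorphisms (which holds here since the base $\spheresp{k-1}{M}$ is $1$-connected, as $M$ is $k$-connected with $k\geq 1$), so that \cref{proposition:naturalityOfShriek} applies.

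For the second assertion, since $\dcop_\myshriek = \shriek\comp\circ\incl^*$ with $\shriek\comp = \exttohom{\extlift{\res}(\myshriek)}$ built from the diagram \cref{equation:STCopDiagram}, and $\dcopnew$ is built the same way from $\shriek\inclsd$ via the outer/upper squares of \cref{equation:relationDiagram} (whose composite recovers \cref{equation:STCopDiagram}), the scalar identity $\extlift{\res}(\myshriek) = \lambda_\myshriek\,\shriek\inclsd$ passes through $\exttohom{(-)}$ by linearity of $\alpha\mapsto\exttohom{\alpha}$, giving $\shriek\comp^{(\myshriek)} = \lambda_\myshriek\,\shriek\comp^{(\shriek\inclsd)}$, and then composing with the common map $\incl^*$ yields $\dcop_\myshriek = \lambda_\myshriek\dcopnew$ as maps $\cohom{\spheresp{k}{M}\times\spheresp{k}{M}}\to\cohom{\spheresp{k}{M}}$. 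The only subtlety is to confirm that the $\incl^*$ appearing in both constructions is literally the same, i.e.\ that the pullback $\spheresp{k}{M}\times_M\spheresp{k}{M}$ and its inclusion into $\spheresp{k}{M}\times\spheresp{k}{M}$ agree in the two diagrams, which is immediate from the fact that the upper square of \cref{equation:relationDiagram} is exactly \cref{equation:newCopDiagram} and the outer square is exactly \cref{equation:STCopDiagram}.
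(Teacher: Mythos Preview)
Your proposal is correct and matches the paper's approach: both reduce the first assertion, via \cref{corollary:extPoincare}, to showing $\exttohom{\extlift{\res}(\myshriek)}(1)=\lambda_\myshriek\omega$ under the identification $\cohom[m]{\disksp{k}{M}}\cong\cohom[m]{M}$, and both obtain this from $\res^*\!\bigl(\exttohom{\myshriek}(1)\bigr)=\inclconst^*\!\bigl(\exttohom{\myshriek}(1)\bigr)$. The only cosmetic difference is that the paper carries this out by hand on the chain level (choosing a semifree resolution $P\xrightarrow{\simeq}\cochain{M}$, a cocycle $u$ with $[u]=1$, and a representative $\varphi$ of $\myshriek$, then computing $(\id\otimes\varphi)(1\otimes u)$), whereas you phrase the same step as an instance of \cref{proposition:naturalityOfShriek}; your parenthetical ``or directly the definition of $\extlift{\res}$'' is exactly what the paper does.
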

\begin{proof}
  Let \(\omega\in\cohom[m]{M}\cong\cohom[m]{\disksp{k}{M}}\) be the orientation class.
  Recall from \cref{corollary:extPoincare} that
  \(\shriek\inclsd\) is characterized by
  \(\exttohom{\shriek\inclsd}(1) = \omega\).
  Hence it is enough to prove
  \begin{math}
    \exttohom{\extlift{\res}(\myshriek)}(1) = \lambda_\myshriek\omega.
  \end{math}

  \newcommand{\unitcocycle}{u}
  \newcommand{\shriekdiagrep}{\varphi}
  Let
  \begin{math}
    \eta\colon P\xrightarrow{\qis}\cochain{M}
  \end{math}
  be a semifree resolution of \(\cochain{M}\) over \(\cochain{\spheresp{k-1}{M}}\),
  and \(\unitcocycle\in P\) a cocycle such that \(\eta(\unitcocycle) = 1\).
  Take a representative
  \begin{math}
    \shriekdiagrep\in \hom_{\cochain{\spheresp{k-1}{M}}}(P, \allowbreak \cochain{\spheresp{k-1}{M}})
  \end{math}
  of \(\myshriek\).
  Then we have
  \begin{math}
    [\shriekdiagrep(u)] = \exttohom{\myshriek}(1)
    \in \cohom[m]{\spheresp{k-1}{M}}.
  \end{math}
  By definition,
  \(\exttohom{\extlift{\res}(\myshriek)}\)
  is represented by the chain map
  \(\id_{\cochain{\disksp{k}{M}}}\otimes\shriekdiagrep\)
  in
  \begin{equation}
    \hom_{\cochain{\disksp{k}{M}}}
    (\cochain{\disksp{k}{M}}\otimes_{\cochain{\spheresp{k-1}{M}}}P,
    \cochain{\disksp{k}{M}}\otimes_{\cochain{\spheresp{k-1}{M}}}\cochain{\spheresp{k-1}{M}}).
  \end{equation}
  Hence we have
  \begin{math}
    \exttohom{\extlift{\res}(\myshriek)}(1)
    = [(\id_{\cochain{\disksp{k}{M}}}\otimes\shriekdiagrep)(1\otimes\unitcocycle)]
    = \inclconst^*[\shriekdiagrep(\unitcocycle)]
    = \lambda_\myshriek\omega
    \in \cohom[m]{M}
  \end{math}
  under the identification \(\cohom[m]{M} = \cohom[m]{\disksp{k}{M}}\).
  This proves the proposition.
\end{proof}

Next we consider the commutativity of the coproduct \(\dcop_\myshriek\).
Let
\(\orirev\colon \spheresp{k-1}{M} \to \spheresp{k-1}{M}\)
be the map induced from the orientation reversing map on \(S^{k-1}\),
satisfying \(\orirev^2=\id\).
Then \(\orirev\) induces the map
\begin{align}
  \orirev^* = \ext_{\orirev^*}(\id, \orirev^*) \colon
  &\ext_{\cochain{\spheresp{k-1}{M}}}(\cochain{M}, \cochain{\spheresp{k-1}{M}}) \\
  &\to
  \ext_{\cochain{\spheresp{k-1}{M}}}(\cochain{M}, \cochain{\spheresp{k-1}{M}}).
\end{align}
By the definition of \(\lambda_\myshriek\),
we have
\begin{equation}
  \label{equation:orirevNotAffect}
  \lambda_\myshriek = \lambda_{\orirev^*\myshriek}.
\end{equation}

The coproduct is commutative in the following sense.
\todo{\(\alpha, \beta\) じゃなくて \(\cpleft, \cpright\) を使う？ここ以外のもcheck}
\begin{proposition}
  \label{proposition:commutativityOfPreviousCoproduct}
  \begin{equation}
    \dcop_\myshriek(\alpha \times \beta)
    = (-1)^{\deg{\alpha}\deg{\beta}}\dcop_{\orirev^*\myshriek}(\beta\times\alpha)
  \end{equation}
\end{proposition}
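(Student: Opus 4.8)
The plan is to realise the interchange of the two factors, together with the orientation reversal $\orirev$, as a single self-homeomorphism of $S^k$, and then to feed this into the naturality statement \cref{proposition:naturalityOfShriek}. Throughout, identify $S^k\#S^k\cong S^k\subset\R^{k+1}$ so that the separating $(k-1)$-sphere is the equator $\{x_{k+1}=0\}$ (to which $\res$ restricts) and the pinching map is the standard $\mathrm{pinch}\colon S^k\to S^k\vee S^k$; correspondingly $M^{S^k\#S^k}=\spheresp{k}{M}$ and $\spheresp{k}{M}\times_M\spheresp{k}{M}=\map(S^k\vee S^k,M)$ with $\comp=\mathrm{pinch}^*$. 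Both coproducts come from the diagram \cref{equation:STCopDiagram} with $S=T=S^k$, so $\dcop_\myshriek=\exttohom{\extlift{\res}(\myshriek)}\circ\incl^*$ and $\dcop_{\orirev^*\myshriek}=\exttohom{\extlift{\res}(\orirev^*\myshriek)}\circ\incl^*$. Let $\theta$ be the interchange of $\spheresp{k}{M}\times\spheresp{k}{M}$ and $\bar\theta$ the interchange of $\spheresp{k}{M}\times_M\spheresp{k}{M}$ (well defined since $S^k$ has one base point; note $\bar\theta=w^*$ for $w\colon S^k\vee S^k\to S^k\vee S^k$ the wedge interchange). Then $\theta\circ\incl=\incl\circ\bar\theta$, so $\bar\theta^*\circ\incl^*=\incl^*\circ\theta^*$, and $\theta^*(\alpha\times\beta)=(-1)^{\deg{\alpha}\deg{\beta}}\beta\times\alpha$ by graded-commutativity of the cup product. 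Hence the proposition reduces to the identity
\begin{equation}
  \exttohom{\extlift{\res}(\myshriek)}=\exttohom{\extlift{\res}(\orirev^*\myshriek)}\circ\bar\theta^*
  \colon\cohom{\spheresp{k}{M}\times_M\spheresp{k}{M}}\to\cohom[*+m]{\spheresp{k}{M}}.
\end{equation}

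Next I would build the homeomorphism. Let $\chi\in SO(k+1)$ negate the last two coordinates. Then: (a) $SO(k+1)$ is connected, so $\chi\simeq\id_{S^k}$; consequently precomposition with $\chi$ is a self-map $\chi^*\simeq\id$ of $\spheresp{k}{M}$, and $(\chi^*)^*=\id$ on cohomology. (b) $\chi$ maps the equator to itself, acting there by negating one coordinate, an orientation-reversing self-homeomorphism of $S^{k-1}$, which we take to be the map defining $\orirev$; hence $\res\circ\chi^*=\orirev\circ\res$ on the nose. (c) Since $\chi\simeq\id$ and the pinch map is homotopy-cocommutative, $\mathrm{pinch}\circ\chi\simeq\mathrm{pinch}\simeq w\circ\mathrm{pinch}$, whence $\chi^*\circ\comp=(\mathrm{pinch}\circ\chi)^*\simeq(w\circ\mathrm{pinch})^*=\comp\circ\bar\theta$.

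Finally I would apply \cref{proposition:naturalityOfShriek} to the cube whose front and back squares are both the pullback square of \cref{equation:STCopDiagram} for $S=T=S^k$, carrying the elements $\myshriek$ and $\orirev^*\myshriek$ respectively, with connecting maps $\chi^*$ on $\spheresp{k}{M}$, $\bar\theta$ on $\spheresp{k}{M}\times_M\spheresp{k}{M}$, $\orirev$ on $\spheresp{k-1}{M}$, and $\id_M$ on $M$. The left face commutes by (b), the top face by (c), and the remaining two faces because the evaluation $\spheresp{k}{M}\times_M\spheresp{k}{M}\to M$ and the constant-map inclusion $\inclconst\colon M\to\spheresp{k-1}{M}$ are interchange-symmetric, a constant map being unchanged by $\orirev$. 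The compatibility of $\myshriek$ and $\orirev^*\myshriek$ demanded by \cref{proposition:naturalityOfShriek} is exactly the definition of $\orirev^*\myshriek$ together with $\orirev\circ\inclconst=\inclconst$ and $b=\id_M$, and the Eilenberg--Moore maps are isomorphisms because $\res$ is a fibration and $\spheresp{k-1}{M}$ is $1$-connected. The proposition therefore gives $(\chi^*)^*\circ\exttohom{\extlift{\res}(\myshriek)}=\exttohom{\extlift{\res}(\orirev^*\myshriek)}\circ\bar\theta^*$, and since $(\chi^*)^*=\id$ by (a) this is the displayed identity, which with the first paragraph proves the statement.

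I expect the main obstacle to be the construction of $\chi$: one needs a single homeomorphism of $S^k$ that is orientation-reversing on the separating sphere (so that the base map is genuinely $\orirev$) and yet homotopic to $\id_{S^k}$ (so that no extra automorphism of $\cohom{\spheresp{k}{M}}$ is introduced). A homeomorphism which is a coned-off reflection on each hemisphere has degree $-1$ and fails the second requirement; the point is to spend the orientation reversal tangentially along the equator while compensating in the normal direction, which is precisely what negating two coordinates achieves. A secondary issue is that the top face of the cube commutes only up to homotopy; this is harmless, since $\extlift{(-)}$, the Eilenberg--Moore comparison, and the chain-level lift of \cref{equation:chainLevelLift} are homotopy invariant (and strict commutativity can be arranged by choosing the pinch map compatibly with $\chi$).
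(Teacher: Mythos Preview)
Your argument is correct and self-contained. The paper itself does not give a proof here; it simply refers the reader to the method of \cite[Theorem 1.5]{wakatsuki18:toappear} for the commutativity of $\dcop_{\shriek\inclconst}$ and observes that the same method goes through for an arbitrary $\myshriek$. Your approach---reducing to the identity $\exttohom{\extlift{\res}(\myshriek)}=\exttohom{\extlift{\res}(\orirev^*\myshriek)}\circ\bar\theta^*$ and then verifying it via \cref{proposition:naturalityOfShriek} applied to a cube whose connecting maps are induced by a rotation $\chi\in SO(k+1)$---is exactly the natural way to fill in those details and almost certainly coincides with what the reference does. The key observation, which you isolate correctly, is that one needs a self-map of $S^k$ that restricts to an orientation-reversing homeomorphism of the equator yet is isotopic to the identity on $S^k$; negating two coordinates achieves this. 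Your final remark that the pinch map can be chosen $\chi$-equivariantly (so that $\mathrm{pinch}\circ\chi = w\circ\mathrm{pinch}$ on the nose, by setting $q_- = q_+\circ\chi|_{D_-}$) removes the only technical wrinkle and makes the cube strictly commutative, so \cref{proposition:naturalityOfShriek} applies without further comment.
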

The proposition is proved by the same method with
the commutativity of the brane coproduct \(\dcop_{\shriek\inclconst}\)
\cite[Theorem 1.5]{wakatsuki18:toappear}.
Note that we used the equation
\(\orirev^*\shriek\inclconst = (-1)^{\bar{m}}\shriek\inclconst\)
\cite[Equation (7.11)]{wakatsuki18:toappear}
to prove
\begin{math}
  \dcop_{\shriek\inclconst}(\alpha \times \beta)
  = (-1)^{\deg{\alpha}\deg{\beta}+\bar{m}}\dcop_{\shriek\inclconst}(\beta\times\alpha).
\end{math}

Using the above propositions,
we give a proof of \cref{theorem:vanishingGeneral}.

\begin{proof}[Proof of \cref{theorem:vanishingGeneral}]
  Since the fibration \(\ev_0\colon \spheresp{k}{M} \to M\)
  has a section \(\inclconst\colon M\to\spheresp{k}{M}\),
  we have a decomposition
  \(\cohom[>0]{\spheresp{k}{M}} \cong \cohom[>0]{M} \oplus \ker(\inclconst^*)\).
  When \(\alpha \in \cohom[>0]{M}\),
  we have \(\alpha\omega = 0 \in \cohom[\deg{\alpha + m}]{M}=0\).
  Hence we assume \(\alpha \in \ker(\inclconst^*)\).
  Then, by \cref{theorem:computeNewCoprod},
  we have
  \begin{align}
    \dcopnew(\alpha\times 1) &= \ev_0^*(\omega\cdot\inclconst^*(\alpha))\cdot 1 = 0 \\
    \dcopnew(1\times \alpha) &= \ev_0^*(\omega\cdot\inclconst^*(1))\cdot \alpha
                       = \ev_0^*\omega\cdot\alpha.
  \end{align}
  Moreover, we have
  \begin{equation}
    \lambda_\myshriek\dcopnew(\alpha\times 1)
    = \dcop_\myshriek(\alpha\times 1)
    = \pm\dcop_{\orirev^*\myshriek}(1\times\alpha)
    = \pm\lambda_\myshriek\dcopnew(1\times\alpha)
  \end{equation}
  by \cref{equation:orirevNotAffect},
  \cref{proposition:commutativityOfPreviousCoproduct}, and
  \cref{proposition:compareShriek}.
  These equations prove the theorem.
\end{proof}

\subsection{Proof of \cref{theorem:vanishingMainTheorem} \cref{item:assump_loop}}
In this subsection,
we prove \cref{theorem:vanishingMainTheorem}
under the assumption \cref{item:assump_loop}.
As a preparation of the proof,
we investigate the map \(\isomexthomol\) in \cref{proposition:extPoincare}.

\newcommand{\pair}[3][]{\langle{#2},{#3}\rangle_{#1}}
As in \cref{proposition:extPoincare},
let \(X\) be a 0-connected space,
\(N\) a Poincaré duality space of dimension \(n\), and
\(\maptopd\colon X\to N\) a map.
We denote the orientation class of \(N\) by \(\omega_N\in\cohom[n]{N}\)
and the fundamental class by \([N]\in\homol[n]{N}\).
Then we have
\(\pair[N]{\omega_N}{[N]} = 1\),
where
\begin{math}
  \pair[N]{-}{-}\colon
  \cohom{N}\otimes\homol{N} \to \K
\end{math}
denotes the pairing.

\newcommand{\cyclex}{x}
\newcommand{\cocyclex}{\nu}
\newcommand{\cocyclen}{\psi}
\begin{proposition}
  \label{proposition:evaluateExtPoincare}
  Fix arbitrary elements \(\cyclex\in\homol[n-l]{X}\)
  and \(\cocyclex\in\cohom[j]{X}\).
  Let
  \begin{math}
    \beta_\cyclex\colon
    \cohom[n-l]{X} \to \cohom[n]{N}
  \end{math}
  be the linear map defined by
  \(\beta_\cyclex(\varphi) = \pair[X]{\varphi}{\cyclex}\omega_N\)
  for \(\varphi\in\cohom[n-l]{X}\).
  Using the isomorphism \(\isomexthomol\) in \cref{proposition:extPoincare},
  we define
  \begin{equation}
    \alpha_{\cyclex} = \isomexthomol^{-1}(\beta_\cyclex)
    \in \ext^l_{\cochain{N}}(\cochain{X},\cochain{N}).
  \end{equation}
  Then
  the element
  \(\exttohom{\alpha_{\cyclex}}(\cocyclex)\in\cohom[l+j]{N}\)
  is the unique element which satisfies
  \begin{equation}
    \label{equation:evaluateExtPoincare}
    \pair[N]{\cocyclen}{\ \exttohom{\alpha_{\cyclex}}(\cocyclex)\cap[N]}
    = (-1)^{l(n-l-j)}\pair[X]{\maptopd^*\cocyclen\cdot\cocyclex}{\ \cyclex}
  \end{equation}
  for any \(\cocyclen\in\cohom[n-l-j]{N}\).
\end{proposition}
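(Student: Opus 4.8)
The plan is to reduce the statement to the defining property of the map $\isomexthomol$ from \cref{proposition:extPoincare} together with standard cap/cup adjointness on the Poincaré duality space $N$. First I would recall that, since $N$ is a Poincaré duality space of dimension $n$, capping with the fundamental class $[N]$ gives an isomorphism $-\cap[N]\colon\cohom[q]{N}\xrightarrow{\cong}\homol[n-q]{N}$, and the pairing $\pair[N]{-}{-}$ is perfect; hence an element $z\in\cohom[l+j]{N}$ is uniquely determined by the collection of values $\pair[N]{\cocyclen}{z\cap[N]}$ as $\cocyclen$ ranges over $\cohom[n-l-j]{N}$. This already establishes the uniqueness clause, so the real content is that $z=\exttohom{\alpha_\cyclex}(\cocyclex)$ satisfies \cref{equation:evaluateExtPoincare}.

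Next I would unwind the definition of $\alpha_\cyclex$. By construction $\isomexthomol(\alpha_\cyclex)=\exttohom{\alpha_\cyclex}|_{\cohom[n-l]{X}}=\beta_\cyclex$, i.e.\ for every $\varphi\in\cohom[n-l]{X}$ we have $\exttohom{\alpha_\cyclex}(\varphi)=\pair[X]{\varphi}{\cyclex}\,\omega_N$. The idea is to apply this not to $\cocyclex$ itself (whose degree $j$ need not be $n-l$) but to the element $\maptopd^*\cocyclen\cdot\cocyclex$, which does lie in $\cohom[n-l]{X}$ whenever $\cocyclen\in\cohom[n-l-j]{N}$. The key step is therefore a "projection formula'' / $\cochain{N}$-linearity identity for $\exttohom{\alpha_\cyclex}$: since $\exttohom{\alpha_\cyclex}$ is represented by a $\cochain{N}$-module map (the image under $\cohom{-}$ of a cocycle in $\hom_{\cochain{N}}(P,\cochain{N})$, composed with the resolution quasi-isomorphism $\cohom{P}\xrightarrow{\cong}\cohom{X}$), it commutes with multiplication by classes pulled back from $N$ along $\maptopd$; that is,
\begin{equation}
  \exttohom{\alpha_\cyclex}(\maptopd^*\cocyclen\cdot\cocyclex)
  = \cocyclen\cdot\exttohom{\alpha_\cyclex}(\cocyclex)\in\cohom{N}.
\end{equation}
Granting this, combine it with the displayed formula for $\beta_\cyclex$ to get
\begin{equation}
  \cocyclen\cdot\exttohom{\alpha_\cyclex}(\cocyclex)
  = \exttohom{\alpha_\cyclex}(\maptopd^*\cocyclen\cdot\cocyclex)
  = \pair[X]{\maptopd^*\cocyclen\cdot\cocyclex}{\cyclex}\,\omega_N.
\end{equation}

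Finally I would pair both sides against $[N]$. Using $\pair[N]{\cocyclen\cdot w}{[N]}=\pair[N]{\cocyclen}{w\cap[N]}$ (the adjunction between cup and cap, up to the Koszul sign coming from permuting degrees $l$, $n-l-j$, $j$) together with $\pair[N]{\omega_N}{[N]}=1$, the left side becomes $\pair[N]{\cocyclen}{\exttohom{\alpha_\cyclex}(\cocyclex)\cap[N]}$ up to the sign $(-1)^{l(n-l-j)}$ recorded in the statement, while the right side becomes $(-1)^{l(n-l-j)}\pair[X]{\maptopd^*\cocyclen\cdot\cocyclex}{\cyclex}$ after the same bookkeeping; matching them gives \cref{equation:evaluateExtPoincare}. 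The main obstacle I anticipate is precisely this sign bookkeeping: one must fix conventions for the Koszul signs in cup product, cap product, the pairing, and in the $\cochain{N}$-linearity of a degree-$l$ extension class, and check that they assemble into exactly $(-1)^{l(n-l-j)}$ — everything else is either the definition of $\alpha_\cyclex$ or the perfectness of Poincaré duality. A secondary point to be careful about is that the $\cochain{N}$-linearity identity holds on cohomology for classes pulled back from $N$; I would justify it by choosing a representative $\shriekdiagrep\in\hom_{\cochain{N}}(P,\cochain{N})$ of $\alpha_\cyclex$ and a cocycle in $P$ representing $\cocyclex$ (via the resolution), noting $\maptopd^*\cocyclen$ acts through the $\cochain{N}$-module structure of $P$, and using that $\shriekdiagrep$ is strictly $\cochain{N}$-linear at the chain level.
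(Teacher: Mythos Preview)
Your proposal is correct and follows essentially the same route as the paper: uniqueness from Poincaré duality, then the $\cohom{N}$-linearity of $\exttohom{\alpha_\cyclex}$ to reduce to the defining property $\exttohom{\alpha_\cyclex}(\varphi)=\pair[X]{\varphi}{\cyclex}\,\omega_N$ on $\cohom[n-l]{X}$, followed by pairing with $[N]$. One small correction on the sign: the factor $(-1)^{l(n-l-j)}$ already appears in the linearity step itself, because $\exttohom{\alpha_\cyclex}$ has degree $l$ and must be commuted past $\cocyclen$ of degree $n-l-j$, so the paper records
\[
  \cocyclen\cdot\exttohom{\alpha_\cyclex}(\cocyclex)
  = (-1)^{l(n-l-j)}\,\exttohom{\alpha_\cyclex}(\maptopd^*\cocyclen\cdot\cocyclex),
\]
after which the remaining cup/cap adjunction and $\pair[N]{\omega_N}{[N]}=1$ introduce no further sign; your version with the unsigned linearity identity would therefore need adjustment, but since you explicitly flag the degree-$l$ extension class as a sign source this is only a matter of placing the Koszul sign correctly.
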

\begin{proof}
  Since the cap product \(-\cap[N]\) is an isomorphism by the Poincaré duality,
  such element is uniquely determined.
  Since \(\exttohom{\alpha_\cyclex}\) is \(\cohom{N}\)-linear,
  we have
  \begin{math}
    \cocyclen\cdot\exttohom{\alpha_\cyclex}(\cocyclex)
    = (-1)^{l(n-l-j)}\exttohom{\alpha_\cyclex}(\maptopd^*\cocyclen\cdot\cocyclex).
  \end{math}
  Using this equation,
  we can prove \cref{equation:evaluateExtPoincare} by a straightforward calculation.
\end{proof}

Now we begin the proof of \cref{theorem:vanishingMainTheorem} \cref{item:assump_loop}.
Let \(M\) be a 1-connected Poincaré duality space of dimension \(m\).
Here we write
\(LM = \spheresp{1}{M}\) and
\(\diag = \inclconst\colon M\to M\times M\)
as usual.
Recall that
\begin{equation}
  \shriek\diag \in
  \ext^m_{\cochain{M\times M}}(\cochain{M}, \cochain{M\times M}) \cong \K
\end{equation}
is the generator,
which is defined up to non-zero scalar multiplication.

\begin{proposition}
  \label{proposition:diagonalClass}
  The element
  \begin{math}
    \exttohom{\shriek\diag}(1)
    \in \cohom[m]{M\times M}
  \end{math}
  is the diagonal class,
  i.e.\ the Poincaré dual of the homology class
  \(\diag_*[M]\in\cohom[m]{M\times M}\).
  In particular, we have
  \begin{equation}
    \diag^*\circ(\exttohom{\shriek\diag})(1) = \chi(M)\omega \in \cohom{M}.
  \end{equation}
\end{proposition}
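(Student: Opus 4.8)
The plan is to make the class $\exttohom{\shriek\diag}(1)$ explicit by means of \cref{proposition:evaluateExtPoincare}, which describes elements of $\ext$ over a Poincaré duality target, and then to read off that it is the diagonal class up to a scalar; the ``in particular'' clause is afterwards the classical self-intersection formula for the diagonal. Two facts drive the argument: first, $\ext^m_{\cochain{M\times M}}(\cochain{M},\cochain{M\times M})$ is one-dimensional, which follows from \cref{proposition:extPoincare} together with $\dim_\K\cohom[m]{M}=\dim_\K\cohom[0]{M}=1$ (Poincaré duality on $M$), so that $\shriek\diag$ is determined up to a nonzero scalar by any nonzero element we can write down; second, \cref{proposition:evaluateExtPoincare} supplies such an element with a pairing formula computing it.

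Concretely I would apply \cref{proposition:evaluateExtPoincare} to the diagonal $\diag\colon M\to M\times M$ (so $X=M$, $N=M\times M$, $n=2m$, $\maptopd=\diag$) with $l=m$, $j=0$, $\cocyclex=1\in\cohom[0]{M}$ and $\cyclex=[M]\in\homol[m]{M}$. This produces an element $\alpha=\isomexthomol^{-1}(\beta_{[M]})\in\ext^m_{\cochain{M\times M}}(\cochain{M},\cochain{M\times M})$; since $\beta_{[M]}$ sends the orientation class $\omega$ of $M$ to $\pair[M]{\omega}{[M]}\,\omega_{M\times M}=\omega_{M\times M}\neq 0$, it follows that $\alpha\neq 0$, and hence $\shriek\diag$ is a nonzero scalar multiple of $\alpha$ by one-dimensionality. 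Moreover \cref{proposition:evaluateExtPoincare} characterizes $\exttohom{\alpha}(1)\in\cohom[m]{M\times M}$ as the unique class with
\begin{equation}
  \pair[M\times M]{\psi}{\ \exttohom{\alpha}(1)\cap[M\times M]}
  = (-1)^{m}\,\pair[M]{\diag^*\psi}{\ [M]}\qquad\text{for all }\psi\in\cohom[m]{M\times M},
\end{equation}
using $(-1)^{l(n-l-j)}=(-1)^{m^2}=(-1)^m$ (and $\maptopd=\diag$, $\cocyclex=1$).

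Next I would compare with the diagonal class $\Delta_M\in\cohom[m]{M\times M}$, the Poincaré dual of $\diag_*[M]$, characterized by $\Delta_M\cap[M\times M]=\diag_*[M]$. Naturality of the Kronecker pairing gives $\pair[M\times M]{\psi}{\Delta_M\cap[M\times M]}=\pair[M\times M]{\psi}{\diag_*[M]}=\pair[M]{\diag^*\psi}{[M]}$ for every $\psi\in\cohom[m]{M\times M}$; comparing this with the displayed identity and using that the Kronecker pairing on $\homol[m]{M\times M}$ is nondegenerate, one gets $\exttohom{\alpha}(1)\cap[M\times M]=(-1)^m\diag_*[M]$, that is $\exttohom{\alpha}(1)=(-1)^m\Delta_M$. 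Since $\shriek\diag$ is a nonzero multiple of $\alpha$ and is itself defined only up to a nonzero scalar, we normalize $\shriek\diag$ so that $\exttohom{\shriek\diag}(1)=\Delta_M$, which is the first assertion.

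For the ``in particular'' clause, I would compute $\pair[M]{\diag^*\Delta_M}{[M]}=\pair[M\times M]{\Delta_M}{\diag_*[M]}=\pair[M\times M]{\Delta_M}{\Delta_M\cap[M\times M]}=\pair[M\times M]{\Delta_M\cup\Delta_M}{[M\times M]}$, i.e.\ the self-intersection number of the diagonal, which equals $\chi(M)$ by the classical computation of this number (for instance via the Künneth expansion $\Delta_M=\sum_i(-1)^{\deg{e_i}}e_i\times f_i$ for a homogeneous basis $\{e_i\}$ of $\cohom{M}$ with Poincaré-dual basis $\{f_i\}$, which also yields directly $\diag^*\Delta_M=\sum_i(-1)^{\deg{e_i}}e_i\cup f_i=\chi(M)\,\omega$). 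Since $\cohom[m]{M}=\K\omega$ with $\pair[M]{\omega}{[M]}=1$, this forces $\diag^*\Delta_M=\chi(M)\,\omega$. I expect the only genuinely delicate points to be the sign bookkeeping in the two pairing identities---harmless here, since $\shriek\diag$ is defined only up to a scalar---and the verification of one-dimensionality of the relevant $\ext$ group, which is exactly what lets us identify $\shriek\diag$ with the explicit element $\alpha$; the rest is routine manipulation of pairings.
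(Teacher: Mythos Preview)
Your proposal is correct and follows essentially the same approach as the paper: both apply \cref{proposition:evaluateExtPoincare} to the diagonal $\diag\colon M\to M\times M$ with $n=2m$, $l=m$, $j=0$, $\cyclex=[M]$, $\cocyclex=1$, use the scalar freedom in $\shriek\diag$ to absorb the sign $(-1)^m$, compare the resulting pairing identity with the defining property $\Delta_M\cap[M\times M]=\diag_*[M]$ of the diagonal class, and then invoke the classical fact (e.g.\ Milnor--Stasheff) that $\diag^*\Delta_M=\chi(M)\omega$. Your write-up is somewhat more explicit about the one-dimensionality of the $\ext$ group and the sign bookkeeping, but the argument is the same.
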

\begin{proof}
  Since \(M\times M\) is also a Poincaré duality space,
  we can apply \cref{proposition:evaluateExtPoincare} for the case
  \(\maptopd = \diag\), \(n = 2m\), \(l=m\),  \(j=0\),
  \(\cyclex = [M]\), and \(\cocyclex = 1\).
  Since \(\shriek\diag\) is defined up to non-zero scalar multiplication,
  we may assume
  \(\shriek\diag = (-1)^m\alpha_{[M]}\).
  By \cref{equation:evaluateExtPoincare}, we have
  \begin{equation}
    \pair[M^2]{\cocyclen}{\ \exttohom{\shriek\diag}(1)\cap[M^2]}
    = \pair[M]{\diag^*\cocyclen\cdot 1}{\ [M]}
    = \pair[M^2]{\cocyclen}{\ \diag_*[M]}
  \end{equation}
  for any
  \(\cocyclen \in \cohom[m]{M^2}\),
  and hence
  \begin{math}
    \exttohom{\shriek\diag}(1)\cap[M^2]
    = \diag_*[M].
  \end{math}

  It is well-known that the diagonal class satisfies the required property
  (c.f.\ e.g.\ \cite[pp.~127--129, Section 11]{milnor-stasheff}).
\end{proof}

Now we have the following theorem using the above lemma.

\begin{theorem}[\cref{theorem:vanishingMainTheorem} \cref{item:assump_loop}]
  \label{theorem:vanishingOnLoopSpace}
  Let \(M\) be a 1-connected Poincaré duality space over \(\K\)
  and denote its orientation class by \(\omega\in\cohom[m]{M}\).
  Then, for any \(\alpha\in\cohom[>0]{LM}\), we have
  \begin{equation}
    \chi(M)\ev_0^*\omega\cdot\alpha = 0 \in \cohom[\deg{\alpha}+m]{LM}.
  \end{equation}
\end{theorem}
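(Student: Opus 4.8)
The plan is to obtain this as the special case $k=1$ of \cref{theorem:vanishingGeneral}. First I would unwind the notation in that case: since $S^{0}$ is a two-point space, $\spheresp{0}{M}=\map(S^{0},M)=M\times M$, the embedding as constant maps $\inclconst\colon M\to\spheresp{0}{M}$ is precisely the diagonal $\diag\colon M\to M\times M$, and $\spheresp{1}{M}=LM$ with $\ev_{0}$ the evaluation at the base point. As $M$ is $1$-connected, \cref{theorem:vanishingGeneral} applies to it, and it requires us to fix an element of $\ext^{m}_{\cochain{M\times M}}(\cochain{M},\cochain{M\times M})$.

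For that element I would take the generator $\shriek\diag$ recalled just above \cref{proposition:diagonalClass}: it spans a one-dimensional space, since $M\times M$ is a Poincar\'e duality space of dimension $2m$ and \cref{proposition:extPoincare} (applied with $N=M\times M$, $X=M$, $\maptopd=\diag$, $l=m$) identifies $\ext^{m}_{\cochain{M\times M}}(\cochain{M},\cochain{M\times M})$ with $\hom_{\K}(\cohom[m]{M},\cohom[2m]{M\times M})\cong\K$. The scalar $\lambda_{\shriek\diag}\in\K$ attached to this choice in \cref{theorem:vanishingGeneral} is, by definition, determined by $\inclconst^{*}\circ(\exttohom{\shriek\diag})(1)=\lambda_{\shriek\diag}\,\omega$, i.e., since $\inclconst=\diag$, by $\diag^{*}\circ(\exttohom{\shriek\diag})(1)=\lambda_{\shriek\diag}\,\omega$.

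The key input is now \cref{proposition:diagonalClass}, which gives $\diag^{*}\circ(\exttohom{\shriek\diag})(1)=\chi(M)\,\omega$; hence $\lambda_{\shriek\diag}=\chi(M)$. Substituting $\myshriek=\shriek\diag$ into the conclusion of \cref{theorem:vanishingGeneral} then yields $\chi(M)\,\ev_{0}^{*}\omega\cdot\alpha=\lambda_{\shriek\diag}\,\ev_{0}^{*}\omega\cdot\alpha=0$ in $\cohom[\deg{\alpha}+m]{LM}$ for every $\alpha\in\cohom[>0]{LM}$, which is exactly the assertion.

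I do not anticipate a real obstacle here: the substantive content sits in \cref{theorem:vanishingGeneral} and \cref{proposition:diagonalClass}. The only point needing care is the degenerate bookkeeping at $k=1$ — that $\spheresp{k-1}{M}$ collapses to $M\times M$, that $\inclconst$ becomes $\diag$, and that $\shriek\diag$ is an admissible choice of the class $\myshriek$, sitting in the correct degree $m$ (rather than the a priori different degree $\bar m$ appearing in \cref{theorem:extSphereSpace}).
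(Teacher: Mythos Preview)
Your proposal is correct and matches the paper's own proof exactly: the paper simply says to apply \cref{theorem:vanishingGeneral} together with \cref{proposition:diagonalClass}, which is precisely the specialization $k=1$, $\myshriek=\shriek\diag$, $\lambda_{\shriek\diag}=\chi(M)$ that you spell out.
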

\begin{proof}
  Apply \cref{theorem:vanishingGeneral} and \cref{proposition:diagonalClass}.
\end{proof}

\begin{remark}
  This theorem generalizes \cite[Theorem 1]{menichi13}
  in the sense that our theorem can be applied to Poincaré duality spaces,
  not only manifolds.
  \todo{\(\ext\)も触れる？}
\end{remark}

\subsection{Proof of \cref{theorem:vanishingMainTheorem} \cref{item:assump_brane}}
\label{subsection:comparison_higher}
In this section,
we prove \cref{theorem:vanishingMainTheorem} under the assumption \cref{item:assump_brane}.

Let \(k\) be a positive \textit{odd} integer and
\(M\) a \(k\)-connected Poincaré duality space over \(\K\) of dimension \(m\).
Assume \(\chara\K=0\) and
\(\dim_\K\left(\bigoplus_n\pi_n(M)\otimes\K\right) < \infty\).

First we explain why we assume \(k\) is odd
in the assumption \cref{item:assump_brane} in \cref{theorem:vanishingMainTheorem}.

\begin{remark}
  \label{remark:assumpIsNatural}
  Let \(x_1,\ldots, x_p\) and \(y_1,\ldots,y_q\) be
  bases of
  \(\bigoplus_n\pi_{2n}(M)\otimes\K\) and
  \(\bigoplus_n\pi_{2n-1}(M)\otimes\K\),
  respectively.
  Then we have the following.
  \begin{itemize}
    \item \(\chi(M)\neq 0\) if and only if \(p=q\).
      See \cref{theorem:elliptic} for details.
    \item Define \(a_i = \deg{x_i}\) and \(b_j = \deg{y_j}\).
      By \cite[Proposition 5.2]{felix-halperin-thomas88}, we have
      \(m = \dim M = \sum_jb_j + \sum_i(1-a_i)\).
      By the same formula, we have
      \begin{equation}
        \bar{m}  = \dim\Omega^{k-1}M =
        \begin{cases}
          m-(q-p)(k-1), & \text{if \(k\) is odd,} \\
          -m-(k-2)p+kq, & \text{if \(k\) is even.}
        \end{cases}
      \end{equation}
      Thus, except for rare exceptions,
      \(\bar{m}\) coincides with \(m\)
      if and only if \(k\) is odd and \(p = q\).
  \end{itemize}
  Since the statement of \cref{theorem:vanishingMainTheorem} is trivial when \(\chi(M)=0\),
  we are interested only in the case \(\chi(M)\neq 0\), i.e.\ \(p=q\).
  Moreover, since we will compare two brane coproducts,
  their degrees \(m\) and \(\bar{m}\) must coincide.
  Hence we may assume \(k\) is odd.
  This explains why
  the assumption \cref{item:assump_brane} in \cref{theorem:vanishingMainTheorem}
  is natural one.
\end{remark}


Now we give a proposition,
which is a key to prove \cref{theorem:vanishingMainTheorem} \cref{item:assump_brane}.

\begin{proposition}
  \label{proposition:shriekInclconst_EulerChar}
  Under the assumption \cref{item:assump_brane} in \cref{theorem:vanishingMainTheorem},
  there exists an element
  \begin{math}
    \myshriek \in
    \ext^{\bar{m}}_{\cochain{\spheresp{k-1}{M}}}(\cochain{M}, \cochain{\spheresp{k-1}{M}})
  \end{math}
  such that
  \begin{equation}
    \inclconst^*\circ(\exttohom{\myshriek})(1) = \chi(M)\omega \in \cohom{M}.
  \end{equation}
\end{proposition}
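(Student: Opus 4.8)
If \(\chi(M)=0\) there is nothing to prove: one takes \(\myshriek=0\), so that \(\exttohom{\myshriek}\) is the zero map and \(\inclconst^*\circ(\exttohom{\myshriek})(1)=0=\chi(M)\omega\). So assume henceforth that \(\chi(M)\neq 0\). Since \(\chara\K=0\) and both \(\pirat\) and \(\cohom{M}\) are finite-dimensional (the latter because \(M\) is a Poincaré duality space), \(M\) is \(\K\)-elliptic; fix a minimal Sullivan model \((\Lambda V,d)\) of \(M\). By the structure theory of elliptic spaces (\cref{theorem:elliptic}) the assumption \(\chi(M)\neq 0\) forces \(\dim\evenpart{V}=\dim\oddpart{V}=:p\), and \cref{proposition:FinDimImplyGorenstein} applied to \(\Omega^{k-1}M\) --- which is \(1\)-connected (as \(M\) is \(k\)-connected) with \(\pi_*(\Omega^{k-1}M)\otimes\K\cong\pi_{*+k-1}(M)\otimes\K\) finite-dimensional --- shows that \(\Omega^{k-1}M\) is Gorenstein of dimension \(\bar m\). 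The dimension formula in \cref{remark:assumpIsNatural} then gives \(\bar m=m\) (here the oddness of \(k\) is used, together with \(p=q\)), so the target \(\cohom[\bar m]{M}=\cohom[m]{M}=\K\omega\) is one-dimensional, and by \cref{theorem:extSphereSpace} the source \(\ext^{m}_{\cochain{\spheresp{k-1}{M}}}(\cochain{M},\cochain{\spheresp{k-1}{M}})\cong\cohom[0]{M}=\K\) is one-dimensional as well. As \(\myshriek\mapsto\inclconst^*\circ(\exttohom{\myshriek})(1)\) is \(\K\)-linear, it therefore suffices to exhibit a single \(\myshriek\) with \(\inclconst^*\circ(\exttohom{\myshriek})(1)\neq 0\); one may then rescale to obtain exactly \(\chi(M)\omega\).

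To produce such a \(\myshriek\) I plan to use the explicit Sullivan-model construction of shriek maps from \cref{section:modelOfShriek}. Because \(\chi(M)>0\), one may take the model of \(M\) to be pure, \(dx_i=0\) and \(dy_j=P_j(x_1,\dots,x_p)\) for bases \(\{x_i\}\) of \(\evenpart{V}\) and \(\{y_j\}\) of \(\oddpart{V}\); then \(\cohom{M}\cong\K[x_1,\dots,x_p]/(P_1,\dots,P_p)\) is a finite complete intersection whose socle \(\cohom[m]{M}=\K\omega\) is generated, up to a nonzero scalar, by the Jacobian \(\det(\partial P_j/\partial x_i)\). The sphere space \(\spheresp{k-1}{M}\) is modelled by \(\Lambda V\otimes\Lambda\bar V\) with \(\bar V=\susp[-(k-1)]{V}\) --- the shift being by the \emph{even} integer \(k-1\), hence parity-preserving --- and \(\inclconst\colon M\to\spheresp{k-1}{M}\) corresponds to the projection killing \(\bar V\). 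Writing down the Koszul--Tate-type semifree resolution of \(\cochain{M}\simeq\Lambda V\) over \(\cochain{\spheresp{k-1}{M}}\simeq\Lambda V\otimes\Lambda\bar V\), one reads off a degree-\(m\) cocycle representing a class \(\myshriek\) and evaluates \(\inclconst^*\circ(\exttohom{\myshriek})(1)\) directly in the model. The expected outcome is \(\pm\chi(M)\omega\): conceptually this class is the Euler class of the ``normal bundle'' of the section \(\inclconst\) of the evaluation fibration \(\spheresp{k-1}{M}\to M\), i.e.\ of its vertical tangent, which along \(\inclconst(M)\) is the \((k-1)\)-fold suspension of the ``tangent of \(M\)'' (in the sense that computes the self-intersection of the diagonal, cf.\ \cref{proposition:diagonalClass}); since \(k-1\) is even, this suspension does not change the Euler class, which is therefore \(\chi(M)\omega\) up to sign. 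Replacing \(\myshriek\) by \(-\myshriek\) if needed gives \(\inclconst^*\circ(\exttohom{\myshriek})(1)=\chi(M)\omega\).

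The main obstacle is precisely the computation sketched in the second paragraph: setting up the semifree resolution explicitly (which is made delicate by the nonzero differential on \(\bar V\)), isolating the socle/Euler-class contribution inside the \(\hom\)-complex, and matching the result with \(\pm\chi(M)\omega\). This is where the content of \cref{section:modelOfShriek} is needed, and where the parity of \(k-1\) enters in an essential way; the reductions in the first paragraph and the final rescaling are routine.
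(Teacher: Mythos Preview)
Your proposal is correct and follows essentially the same route as the paper: reduce to \(\chi(M)\neq 0\), use ellipticity to get a pure minimal model with \(\dim\evenpart{V}=\dim\oddpart{V}\), invoke the one-dimensionality of the \(\ext\) group to reduce to producing any \(\myshriek\) with nonzero image, and then appeal to the explicit chain-map construction of \cref{section:modelOfShriek} (which yields \(\inclconstmodel\circ\extgen(1)=\det(\partial(dy_j)/\partial x_i)\), cf.\ \cref{corollary:shriekJacobian}). The one minor difference is in how the constant is pinned down: the paper identifies \(\det(\partial(dy_j)/\partial x_i)\) with \(\chi(M)\omega\) by routing through the \(k=1\) case and the diagonal-class computation of \cref{proposition:diagonalClass} (see \cref{proposition:NonZeroJacobian}), whereas you invoke the commutative-algebra fact that the Jacobian generates the socle together with an Euler-class heuristic; either way the substantive work is exactly the computation you correctly defer to \cref{section:modelOfShriek}.
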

We defer the proof of the proposition to \cref{section:modelOfShriek}.
Applying the proposition and \cref{theorem:vanishingGeneral},
we have \cref{item:assump_brane} of \cref{theorem:vanishingMainTheorem}.

\begin{theorem}[\cref{theorem:vanishingMainTheorem} \cref{item:assump_brane}]
  \label{theorem:vanishingOnSphereSpace}
  Under the assumption \cref{item:assump_brane} in \cref{theorem:vanishingMainTheorem},
  we have
  \begin{equation}
    \chi(M)\ev_0^*\omega\cdot\alpha = 0 \in \cohom[\deg{\alpha}+m]{\spheresp{k}{M}}
  \end{equation}
  for any \(\alpha\in\cohom[>0]{\spheresp{k}{M}}\).
\end{theorem}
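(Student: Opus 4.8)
The plan is to obtain this statement as a direct corollary of \cref{theorem:vanishingGeneral}, once \cref{proposition:shriekInclconst_EulerChar} is available. The only point needing attention is that \cref{theorem:vanishingGeneral} requires a shriek class of degree exactly $m$, whereas \cref{proposition:shriekInclconst_EulerChar} produces one of degree $\bar m = \dim\Omega^{k-1}M$; so the first task is to check that, in the relevant case, $\bar m = m$.

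First I would dispose of the trivial case: if $\chi(M)=0$ the claimed identity reads $0=0$, so I may assume $\chi(M)\neq 0$. Under assumption \cref{item:assump_brane}, $M$ is elliptic ($\chara\K=0$ and $\pi_*(M)\otimes\K$ is finite dimensional), and then $\Omega^{k-1}M$ is again elliptic and $1$-connected, hence Gorenstein. By the criterion recalled in \cref{remark:assumpIsNatural} (see also \cref{theorem:elliptic}), $\chi(M)\neq 0$ forces $p=q$, where $p$ and $q$ denote the ranks of the even and odd parts of $\pi_*(M)\otimes\K$. Since $k$ is odd, the dimension formula in \cref{remark:assumpIsNatural} then gives
\[
  \bar m = \dim\Omega^{k-1}M = m - (q-p)(k-1) = m,
\]
so $\Omega^{k-1}M$ is a Gorenstein space of dimension precisely $m$.

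Next I would apply \cref{proposition:shriekInclconst_EulerChar} to obtain an element
\[
  \gamma \in \ext^{\bar m}_{\cochain{\spheresp{k-1}{M}}}(\cochain{M},\cochain{\spheresp{k-1}{M}})
         = \ext^{m}_{\cochain{\spheresp{k-1}{M}}}(\cochain{M},\cochain{\spheresp{k-1}{M}})
\]
with $\inclconst^*\circ(\exttohom{\gamma})(1) = \chi(M)\,\omega$. Comparing this with the equation defining $\lambda_\gamma$ in \cref{theorem:vanishingGeneral} identifies $\lambda_\gamma = \chi(M)$. Since $\gamma$ now has the required degree $m$, it is a legitimate input for \cref{theorem:vanishingGeneral}, which immediately gives
\[
  \chi(M)\,\ev_0^*\omega\cdot\alpha = \lambda_\gamma\,\ev_0^*\omega\cdot\alpha = 0 \in \cohom[\deg{\alpha}+m]{\spheresp{k}{M}}
\]
for every $\alpha \in \cohom[>0]{\spheresp{k}{M}}$.

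In this reduction the genuine difficulty is concentrated entirely in \cref{proposition:shriekInclconst_EulerChar}: one must exhibit a shriek class on $\spheresp{k-1}{M}$ whose associated scalar is \emph{exactly} the Euler characteristic $\chi(M)$, not merely some nonzero multiple of it (which is all that \cref{theorem:extSphereSpace} and \cref{equation:shriekInExt} guarantee a priori). I expect this to be the main obstacle, and it is precisely where the hypotheses $\chara\K = 0$ and $\dim_\K\bigl(\bigoplus_n\pi_n(M)\otimes\K\bigr) < \infty$ are really used: the argument builds an explicit Sullivan-type model of $\spheresp{k-1}{M}$ together with the maps $\res$ and $\inclconst$, and computes the Gorenstein/Euler class through it. This computation is carried out in \cref{section:modelOfShriek}, to which \cref{proposition:shriekInclconst_EulerChar} is deferred.
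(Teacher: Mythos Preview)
Your proposal is correct and follows essentially the same route as the paper: the paper's proof of this theorem is the one-line deduction ``apply \cref{proposition:shriekInclconst_EulerChar} and \cref{theorem:vanishingGeneral}'', with the degree check $\bar m = m$ having already been explained in \cref{remark:assumpIsNatural}. You have simply spelled out that degree check explicitly (disposing of $\chi(M)=0$ first and then using $p=q$ when $\chi(M)\neq 0$), which is exactly what is implicitly needed.
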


Hence the rest of this article is
devoted to the proof of \cref{proposition:shriekInclconst_EulerChar}.

\section{Models of shriek maps}
\label{section:modelOfShriek}
In this section,
we give a proof of \cref{proposition:shriekInclconst_EulerChar}.
As a preparation of the proof,
we explicitly construct a model of the shriek map \(\shriek\inclconst\)
when the coefficient is a field \(\K\) of characteristic zero.
By \cref{equation:shriekInExt},
it is enough to construct a non-trivial element in
\begin{math}
  \ext^{\bar{m}}_{\cochain{\spheresp{k-1}{M}}}(\cochain{M}, \cochain{\spheresp{k-1}{M}}) \cong \K.
\end{math}
In \cref{subsection:modelOfShriek_candidate},
we construct a candidate of the shriek map,
whose non-triviality is proved in \cref{subsection:modelOfShriek_pure}
under some assumptions.

The construction is a generalization of the ones in \cite{naito13} and \cite{wakatsuki16},
which treat only the case \(k=1\).
Note that, in \cite[Proposition 6.2]{wakatsuki18:toappear},
the shriek map is explicitly constructed
when \(k\) is even and the minimal Sullivan model is pure,
which is much simpler than the one in this section.

Throughout this section,
we assume \(\chara\K = 0\) and
make full use of rational homotopy theory.
See \cite{felix-halperin-thomas01} for basic definitions and theorems.

For a graded vector space \(V\),
we define a graded vector space \(\susp[k]{V}\)
by \((\susp[k]{V})^n = V^{n+k}\).
For an element \(v\in V\),
we denote the corresponding element by \(\susp[k]{v}\in\susp[k]{V}\).
For simplicity, we write \(\susp{V} = \susp[1]{V}\).

Let \((\wedge V, d)\) be a Sullivan algebra satisfying \(\dim V < \infty\) and \(V^1 = 0\).
\newcommand{\base}{z}
\newcommand{\bi}{t}             
We fix a basis
\(\base_1, \ldots ,\base_r\) of \(V\)
such that
\(d\base_{\bi+1} \in \wedge V(\bi)\),
where
\(V(\bi) = \spanv_\K\{\base_1, \ldots, \base_\bi\}\).

\subsection{Construction of a chain map}
\label{subsection:modelOfShriek_candidate}
In this subsection,
we give an explicit construction of a candidate of the shriek map for \(k\geq 1\).
The construction is completely analogous to the one in \cite{wakatsuki16}.

\newcommand{\mdisk}[1]{{\mathcal D}^{#1}}
\newcommand{\msphere}[1]{{\mathcal S}^{#1}}
\newcommand{\mainterm}{\sigma}
\newcommand{\errterm}{\tau}
In this subsection,
we assume \(V^{\leq k} = 0\) additionally.
Write
\(\msphere{k-1} = \msphere{k-1}V = \wedge V \otimes \wedge \susp[k-1]{V}\)
and
\(\mdisk{k} = \mdisk{k}V = \wedge V \otimes \wedge \susp[k-1]{V} \otimes \susp[k]{V}\).
Here we define two Sullivan algebras \((\msphere{k-1}, d)\) and \((\mdisk{k}, d)\),
and two linear maps
\(\mainterm\colon V\to \msphere{k-1}\)
and
\(\errterm\colon V\to \mdisk{k}\).
Note that \((\msphere{k-1}, d)\) and \((\mdisk{k}, d)\)
are models of \(\spheresp{k-1}{M}\) and \(\disksp{k}{M}\), respectively.

Let \(\tilde{s}^{k-1}\colon \msphere{k-1}\to\msphere{k-1}\) be the derivation defined by
\(\tilde{s}^{k-1}(v) = \susp[k-1]{v}\) and \(\tilde{s}^{k-1}(\susp[k-1]{v}) = 0\).
By an abuse of notation,
we write \(\tilde{s}^{k-1}\) simply by \(\susp[k-1]{}\).
Similarly we define the derivation
\(\susp[k]{}\colon \mdisk{k}\to\mdisk{k}\).
Note that these derivations are not equal to the compositions of \(\susp[1]{}\)
(e.g.\ \(\susp[k-1]{}\neq \susp[1]{}\circ\cdots\susp[1]{}\)).

First we define the differentials \(d\) on \(\msphere{k-1}\) and \(\mdisk{k}\) in the case \(k=1\).
Then \((\msphere{0}, d)\) is just the tensor product \((\wedge V, d)\tpow2\).
The dga \((\mdisk{1}, d)\) is a relative Sullivan algebra over \((\wedge V, d)\tpow2\),
defined by the formula
\begin{math}
  d(\susp \base_\bi)
  = 1\otimes\base_\bi - \base_\bi\otimes 1
  - \sum_{n=1}^\infty\frac{(sd)^n}{n!}(\base_\bi\otimes 1)
\end{math}
inductively on \(\bi\)
(see \cite[Section 15 (c)]{felix-halperin-thomas01} or \cite[Appendix A]{wakatsuki16} for details).
Then, for \(v \in V\), we set
\(\mainterm v = 1\otimes v - v\otimes 1\)
and
\(\errterm v = - \sum_{n=1}^\infty\frac{(sd)^n}{n!}(v\otimes 1)\),
which satisfy
\(d\susp v = \mainterm v + \errterm v\).

Next we consider the case \(k\geq 2\).
Define the differential \(d\) on \(\msphere{k-1}\)
by the formula \(d\susp[k-1]v = (-1)^{k-1}\susp[k-1]{dv}\).
Set
\(\mainterm v = \susp[k-1]{v}\),
\(\errterm v = (-1)^k\susp[k]{dv}\).
Then we define the relative Sullivan algebra \((\mdisk{k}, d)\) over
\((\msphere{k-1}, d)\) by the formula
\(d\susp[k]{v} = \mainterm v + \errterm v\).
See \cite[Section 5]{wakatsuki18:toappear} for details.
By the following proposition,
we can use \(\msphere{k-1}\) and \(\mdisk{k}\) to construct
the shriek map \(\shriek\inclconst\).

\begin{proposition}[{\cite[Proposition 5.1]{wakatsuki18:toappear}}]
  Let \(M\) be a \(k\)-connected space
  and \((\wedge V, d)\) be its Sullivan model.
  Then the above algebras \(\msphere{k-1}\) and \(\mdisk{k}\) are
  Sullivan models of \(\spheresp{k-1}{M}\) and \(\disksp{k}{M}\).
  In particular, we have
  \begin{equation}
    \ext_{\cochain{\spheresp{k-1}{M}}}\left(
      \cochain{M}, \cochain{\spheresp{k-1}{M}}
    \right)
    \cong \cohom{\hom_{\msphere{k-1}}\left(
        \mdisk{k}, \msphere{k-1}
      \right)}
  \end{equation}
\end{proposition}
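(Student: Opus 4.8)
The statement to prove is Proposition [5.1 of wakatsuki18], which asserts that the explicitly constructed algebras $\msphere{k-1}$ and $\mdisk{k}$ are Sullivan models of $\spheresp{k-1}{M}$ and $\disksp{k}{M}$, and that consequently the relevant $\ext$ group is computed by $\cohom{\hom_{\msphere{k-1}}(\mdisk{k}, \msphere{k-1})}$. Since the excerpt explicitly says this is quoted from \cite[Proposition 5.1]{wakatsuki18:toappear}, the honest "proof" here is largely a citation together with a sketch of why the two facts fit together. So the plan is: first, establish that $\msphere{k-1}$ is a Sullivan model of $\spheresp{k-1}{M}$; second, that $\mdisk{k}$ is a relative Sullivan model of $\disksp{k}{M}$ over $\msphere{k-1}$ realizing the restriction fibration $\inclconst\colon\spheresp{k-1}{M}\to\disksp{k}{M}$ (or rather the map in the appropriate direction); third, deduce the $\ext$ computation from these two model statements plus the general machinery of \cref{section:extBasic}.

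First I would recall the standard rational model for a mapping space out of a sphere. For a $k$-connected space $M$ with finite-dimensional Sullivan model $(\wedge V, d)$, the mapping space $\spheresp{k-1}{M} = \map(S^{k-1}, M)$ has a Sullivan model obtained from the cochain model of $S^{k-1}$: writing $H^*(S^{k-1};\K) = \K \oplus \K\cdot e$ with $\deg e = k-1$, the Haefliger/Brown–Szczarba construction gives $\wedge(V \otimes H^*(S^{k-1})) = \wedge V \otimes \wedge \susp[k-1]{V}$ as underlying algebra, with a differential that is the "Lie derivative along $e$" perturbation of $d\otimes d_{S^{k-1}}$. This is exactly the algebra $\msphere{k-1}$ with the differential defined in the subsection: in degree $k=1$ it is $(\wedge V,d)^{\otimes 2}$ (the model of $LM = \map(S^1,M)$ via $H^*(S^1) = \K\oplus\K$, but written as a single tensor factor plus a suspension copy — here the two-factor description comes from $S^0 \hookrightarrow S^1$ as the two points), and for $k\geq 2$ the formula $d\susp[k-1]v = (-1)^{k-1}\susp[k-1]{dv}$ is precisely the derivation-extension of $d$ along the class $e$. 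The $k$-connectivity of $M$ (hence $V^{\leq k}=0$) guarantees the relevant convergence and that the answer genuinely models the mapping space rather than some completion. I would cite \cite[Section 5]{wakatsuki18:toappear} and the classical references (Haefliger, Brown–Szczarba, or \cite[Section 15]{felix-halperin-thomas01}) for these facts.

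Next, for the disk: $\disksp{k}{M} = \map(D^k, M) \simeq M$ since $D^k$ is contractible, but the point of the model $\mdisk{k} = \msphere{k-1}\otimes \wedge\susp[k]{V}$ is that it realizes the restriction map $\disksp{k}{M}\to\spheresp{k-1}{M}$ (restriction to $\partial D^k = S^{k-1}$) as a relative Sullivan algebra — equivalently, it is the pullback/path-space type construction attached to $D^k = \mathrm{cone}(S^{k-1})$. Indeed, writing $D^k/\partial D^k$ and the cofiber sequence $S^{k-1}\to D^k\to S^k$, the extra generators $\susp[k]{V}$ are the $H^*(S^k)$-twist, and $d\susp[k]v = \mainterm v + \errterm v = \susp[k-1]v + (-1)^k\susp[k]{dv}$ exhibits $\mdisk{k}$ as acyclic over $\msphere{k-1}$ in the sense making the projection $\mdisk{k}\to\msphere{k-1}$ (killing $\susp[k]{V}$) — no wait, rather making $\msphere{k-1}\hookrightarrow\mdisk{k}$ — a model of $\inclconst\colon\spheresp{k-1}{M}\hookrightarrow\disksp{k}{M}$ as constant maps, since this inclusion is a homotopy equivalence onto its image and the relative term is contractible. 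The cited \cite[Proposition 5.1]{wakatsuki18:toappear} handles the bookkeeping; for the case $k=1$ the same role is played by the standard relative model of the diagonal $M\to M\times M$ as in \cite[Section 15(c)]{felix-halperin-thomas01}. The main obstacle, and the one I would be least cavalier about, is verifying that the perturbed differentials actually square to zero and that the Eilenberg–Moore / mapping-space model theorems apply with the stated connectivity and finiteness hypotheses — but these are exactly the content of the quoted proposition, so in the present paper it suffices to invoke it.

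Finally, granting that $\msphere{k-1}$ and $\mdisk{k}$ are Sullivan models of $\spheresp{k-1}{M}$ and $\disksp{k}{M}$ compatible with $\inclconst$, the displayed isomorphism follows from the definition of $\ext$ recalled in \cref{section:extBasic}: a semifree resolution of $\cochain{M}$ over $\cochain{\spheresp{k-1}{M}}$ can be taken to be $\mdisk{k}$ viewed as a $\msphere{k-1}$-module (it is semifree since it is a relative Sullivan algebra, hence a free graded-commutative extension, and $V$ is finite-dimensional with a filtration as fixed above), and $\mdisk{k}\xrightarrow{\qis}\cochain{M}$ via the quasi-isomorphism $\msphere{k-1}\otimes\wedge\susp[k]{V}\to\msphere{k-1}\to$ (model of the constant-map inclusion); indeed $\mdisk{k}$ is a model of $\disksp{k}{M}\simeq M$. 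Therefore
\begin{equation}
  \ext_{\cochain{\spheresp{k-1}{M}}}\left(\cochain{M}, \cochain{\spheresp{k-1}{M}}\right)
  = \cohom{\hom_{\cochain{\spheresp{k-1}{M}}}(\mdisk{k}, \cochain{\spheresp{k-1}{M}})}
  \cong \cohom{\hom_{\msphere{k-1}}(\mdisk{k}, \msphere{k-1})},
\end{equation}
where the last step is induced by the quasi-isomorphism $\msphere{k-1}\xrightarrow{\qis}\cochain{\spheresp{k-1}{M}}$ together with semifreeness of $\mdisk{k}$, which makes $\hom_{\msphere{k-1}}(\mdisk{k},-)$ send quasi-isomorphisms of $\msphere{k-1}$-modules to quasi-isomorphisms. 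This is the assertion of the proposition.
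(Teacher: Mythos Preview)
Your proposal is correct. The paper does not actually prove this proposition at all---it simply cites it as \cite[Proposition 5.1]{wakatsuki18:toappear} and moves on---so your sketch (Haefliger/Brown--Szczarba models for sphere mapping spaces, the relative Sullivan model for $\disksp{k}{M}\to\spheresp{k-1}{M}$, and the deduction of the $\ext$ isomorphism from semifreeness of $\mdisk{k}$ over $\msphere{k-1}$) already goes well beyond what the present paper provides and is the right outline of the argument in the cited reference.
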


Moreover, we define
\(\msphere{k-1}(\bi) = \wedge V(\bi) \otimes \wedge \susp[k-1]{V(\bi)}\)
and
\(\mdisk{k}(\bi) = \wedge V(\bi) \otimes \wedge \susp[k-1]{V(\bi)} \otimes \susp[k]{V(\bi)}\).
Then we have
\(\mainterm\colon V(\bi)\to \msphere{k-1}(\bi)\)
and
\(\errterm\colon V(\bi)\to \mdisk{k}(\bi - 1)\).

\newcommand{\elmwedgeU}{\nu}
\newcommand{\fst}[1]{{#1}_{(1)}}
\newcommand{\snd}[1]{{#1}_{(2)}}

Next we give a construction of shriek maps.

\begin{definition}
  \label{definition:shriekInductiveConstruction}
  For \(\bi = 0,\ldots, n-1\)
  define a \(\K\)-linear map
  \begin{equation}
    \Phi\colon
    \hom_{\msphere{k-1}(\bi-1)}
    \left(
      \mdisk{k}(\bi-1),
      \msphere{k-1}(\bi-1)
    \right)
    \rightarrow \hom_{\msphere{k-1}(\bi)}
    \left(
      \mdisk{k}(\bi),
      \msphere{k-1}(\bi)
    \right)
  \end{equation}
  of odd degree as follows.
  \begin{enumerate}
    \item In the case \(\deg{\base_{\bi}}+k-1\) is odd,
      for
      \begin{math}
        f \in \hom_{\msphere{k-1}(\bi-1)}
        \left(
          \mdisk{k}(\bi-1),
          \msphere{k-1}(\bi-1)
        \right),
      \end{math}
      define
      \begin{equation}
        \Phi(f) \in \hom_{\msphere{k-1}(\bi)}
        \left(
          \mdisk{k}(\bi),
          \msphere{k-1}(\bi)
        \right)
      \end{equation}
      by
      \begin{equation}
        \Phi(f)(\elmwedgeU)
        = \mainterm \base_{\bi} \cdot f(\elmwedgeU)
        - (-1)^{\deg{f}}f(\errterm \base_{\bi} \cdot \elmwedgeU),\quad
        \Phi(f)(\elmwedgeU\cdot(\susp{\base_{\bi}})^l)=0
      \end{equation}
      for \(\elmwedgeU \in \wedge \susp{V(\bi-1)}\) and \(l \geq 1\).
    \item In the case \(\deg{\base_{\bi}}+k-1\) is even,
      for
      \begin{math}
        f \in \hom_{\msphere{k-1}(\bi-1)}
        \left(
          \mdisk{k}(\bi-1),
          \msphere{k-1}(\bi-1)
        \right),
      \end{math}
      define \(\Phi(f)\) by
      \begin{equation}
        \Phi(f)(\elmwedgeU\cdot\susp{\base_{\bi}})
        = (-1)^{\deg{f} + \deg{\elmwedgeU}}f(\elmwedgeU),\quad
        \Phi(f)(\elmwedgeU) = 0
      \end{equation}
      for \(\elmwedgeU \in \wedge \susp{V(\bi-1)}\).
  \end{enumerate}
\end{definition}

By a straight-forward calculation,
the linear map \(\Phi\) is a chain map of odd degree.
In other words, the map \(\Phi\) satisfies \(d\Phi = -\Phi d\).

\newcommand{\extgen}{\varphi}
\newcommand{\extgeni}[1]{\extgen_{#1}} 
Hence we define chain maps
\begin{equation}
  \extgeni{\bi} \in \hom_{\msphere{k-1}(\bi)}
        \left(
          \mdisk{k}(\bi),
          \msphere{k-1}(\bi)
        \right)
\end{equation}
by
\(\extgeni{0} = \id_\K\)
and
\(\extgeni{\bi+1} = \Phi(\extgeni{\bi})\),
inductively.


\subsection{The pure case with \(k\) odd}
\label{subsection:modelOfShriek_pure}

Next we investigate the above map
in the case \((\wedge V, d)\) is pure and \(k\) is odd.

\begin{definition}[{\cite[Section 32 (a)]{felix-halperin-thomas01}}]
  A Sullivan algebra \((\wedge V, d)\) is \textit{pure}
  if \(d(\evenpart{V}) = 0\) and \(d(\oddpart{V}) \subset \wedge \evenpart{V}\).
\end{definition}

Here we apply the above construction for the case
the basis \(\base_1,\ldots,\base_n\)
is given by the sequence
\(x_1,\ldots x_p,y_1,\ldots, y_q\),
where
\(x_1,\ldots x_p\) and \(y_1,\ldots, y_q\) are
(arbitrary) bases of \(\evenpart{V}\) and \(\oddpart{V}\), respectively.
That is,
\(\base_i = x_i\) for \(1\leq i \leq p\) and
\(\base_{p+j} = x_j\) for \(1\leq j \leq q\).
In this case,
we can write
\begin{math}
  \errterm y_j = (-1)^k\sum_i \alpha_{ji}\cdot \susp[k]{x_i}
\end{math}
for some elements \(\alpha_{ji} \in \msphere{k-1}\).
Note that \(\alpha_{ji} \in \wedge\evenpart{V}\) when \(k\geq 2\),
and \(\alpha_{ji} \in \wedge\evenpart{V}\otimes\wedge\evenpart{V}\) when \(k = 1\).

\newcommand{\inclconstmodel}{\mu}
Let \(\inclconstmodel\colon \msphere{k-1}\to\wedge V\) be
the multiplication map when \(k=1\),
and the map defined by
\(\inclconstmodel(v) = v\) and \(\inclconstmodel(\susp[k-1]{v}) = 0\)
when \(k\geq 2\).
Then we have
\begin{equation}
  \label{equation:partial_derivative}
  \inclconstmodel(\alpha_{ji}) = \frac{\partial(dy_j)}{\partial x_i} \in \wedge \evenpart{V}.
\end{equation}

\newcommand{\len}[1]{l(#1)}
Write \([p] = \{1, 2, \ldots, p\}\).
For any subset
\(I = \{i_1, \ldots, i_n\} \subset [p]\) with \(i_1 < \cdots < i_n\),
we define
\(\deg{I} = i_1 + \cdots i_n\),
\(\len{I} = n\), and
\(\susp[k]{x_I} = \susp[k]{x_{i_1}}\cdots\susp[k]{x_{i_n}}\).
Similarly, for any subset
\(I = \{j_1, \ldots, j_n\} \subset [q]\) with \(j_1 < \cdots < j_n\),
we define
\(\mainterm{y_J} = \mainterm{y_{j_n}}\cdots\mainterm{y_{j_1}}\).

For \(0\leq i \leq p\),
we can easily compute \(\extgeni{i}\) by induction on \(i\).

\begin{lemma}
  \label{lemma:extgeni_evenpart}
  For any integer \(i\) with \(0\leq i \leq p\)
  and any subset \(I\subset [i]\),
  we have
  \begin{equation}
    \extgeni{i}(\susp[k]{x_{[p]\setminus I}}) =
    \begin{cases}
      1, & \text{if \(I = \emptyset\),}\\
      0, & \text{if \(I \neq \emptyset\).}
    \end{cases}
  \end{equation}
\end{lemma}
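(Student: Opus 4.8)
The plan is to prove the formula by induction on $i$, using the recursion $\extgeni{i}=\Phi(\extgeni{i-1})$ together with the explicit description of $\Phi$ in \cref{definition:shriekInductiveConstruction}. The one structural fact that makes the computation short is that for every index $i$ with $1\le i\le p$ the generator $\base_i=x_i$ is even, and since $k$ is odd the quantity $\deg{\base_i}+k-1$ is even; hence at each of these steps $\Phi$ is given by the second branch of \cref{definition:shriekInductiveConstruction}, which sends a map $f$ to the map that vanishes on $\wedge\susp[k]{V(i-1)}$ and sends $\elmwedgeU\cdot\susp[k]{x_i}$ to $(-1)^{\deg{f}+\deg{\elmwedgeU}}f(\elmwedgeU)$. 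The base case $i=0$ is immediate: $\extgeni{0}=\id_\K$, the only admissible subset is $I=\emptyset$, and $\extgeni{0}(1)=1$.

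For the inductive step I would fix $I\subseteq[i]$ and split according to whether $i\in I$. If $i\notin I$, then $I\subseteq[i-1]$ and the product we must evaluate has $\susp[k]{x_i}$ as its last factor, so it equals $\elmwedgeU\cdot\susp[k]{x_i}$ with $\elmwedgeU=\susp[k]{x_{[i-1]\setminus I}}\in\wedge\susp[k]{V(i-1)}$; applying the formula above gives $\extgeni{i}(\susp[k]{x_{[i]\setminus I}})=(-1)^{\deg{\extgeni{i-1}}+\deg{\elmwedgeU}}\,\extgeni{i-1}(\susp[k]{x_{[i-1]\setminus I}})$, and the induction hypothesis evaluates the last factor to $1$ if $I=\emptyset$ and to $0$ otherwise. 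When $I\ne\emptyset$ this already yields $0$ irrespective of the sign; when $I=\emptyset$ one checks the exponent $\deg{\extgeni{i-1}}+\deg{\elmwedgeU}$ is even, since each $\susp[k]{x_l}$ is odd (as $x_l$ is even and $k$ is odd) so $\deg{\elmwedgeU}\equiv i-1\pmod 2$, while $\deg{\extgeni{i-1}}\equiv i-1\pmod 2$ because $\Phi$ has odd degree and $\extgeni{0}$ has degree $0$. If instead $i\in I$, then the product involves no $\susp[k]{x_i}$, hence lies in $\wedge\susp[k]{V(i-1)}$, where $\extgeni{i}=\Phi(\extgeni{i-1})$ vanishes as noted above; since $i\in I$ forces $I\ne\emptyset$, this matches the asserted value $0$.

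The only genuinely delicate point is the parity bookkeeping in the case $i\notin I$, $I=\emptyset$; everything else is a mechanical unwinding of the defining formula for $\Phi$ and of the inductive hypothesis, so I do not expect a real obstacle. As a consistency check I would also keep in mind that the formula for $\Phi$ at step $i$ only "sees" elements of $\wedge\susp[k]{V(i-1)}$, which is exactly where the products $\susp[k]{x_l}$ with $l<i$ live, so there is no mismatch between the object being evaluated and the domain of the relevant map.
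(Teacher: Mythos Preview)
Your argument is correct and is exactly the induction the paper has in mind: the text simply says ``we can easily compute $\extgeni{i}$ by induction on $i$'' and states the lemma without proof, and you have carried out precisely that induction using the second branch of \cref{definition:shriekInductiveConstruction} (since $\deg{x_i}+k-1$ is even for $k$ odd). Your parity check for the sign in the case $I=\emptyset$ is also correct.

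One small remark: you have tacitly (and correctly) read the statement as concerning $\susp[k]{x_{[i]\setminus I}}$ rather than $\susp[k]{x_{[p]\setminus I}}$, which is the only way it makes sense for $i<p$ since $\extgeni{i}$ is only defined on $\mdisk{k}(i)$; the case $i=p$ used in \cref{proposition:extgeni_oddpart} is of course unaffected.
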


Moreover,
we have the following formulas for \(\extgeni{p+j}\) for \(0\leq j \leq q\).

\begin{proposition}
  \label{proposition:extgeni_oddpart}
  Let \(j\) be an integer with \(0\leq j \leq q\) and
  \(I \subset [p]\) a subset.
  Write \(n = \len{I}\) and \(I = \{i_1, \ldots, i_n\}\)
  with \(i_1 < \cdots < i_n\).
  Then the element
  \(\extgeni{p+j}(\susp[k]{x_{[p]\setminus I}}) \in \msphere{k-1}(j)\)
  satisfies the following.
  \begin{enumerate}
    \item \label{item:extgeni_top}
      If \(n = 0\), then we have
      \(\extgeni{p+j}(\susp[k]{x_{[p]}}) = \mainterm y_{[j]}\).
    \item \label{item:extgeni_middle}
      If \(n < j\),
      then the element \(\extgeni{p+j}(\susp[k](x_{[p]\setminus I}))\)
      is contained in the ideal
      \((\mainterm y_1,\ldots, \mainterm y_j) \subset \msphere{k-1}(j)\).
    \item \label{item:extgeni_bottom}
      If \(n \geq j\), then we have
      \begin{equation}
        \extgeni{p+j}(\susp[k]{x_{[p]\setminus I}}) =
        \begin{cases}
          (-1)^{\deg{I}+pj}\det\bigl((\alpha_{t, i_r})_{1\leq t, r\leq j}\bigr),
          & \text{if \(n = j\),} \\
          0, & \text{if \(n > j\).}
        \end{cases}
      \end{equation}
  \end{enumerate}
\end{proposition}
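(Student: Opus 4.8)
The plan is to prove all three statements of \cref{proposition:extgeni_oddpart} simultaneously by induction on $j$, using the recursion $\extgeni{p+j+1} = \Phi(\extgeni{p+j})$ and the explicit formula for $\Phi$ from \cref{definition:shriekInductiveConstruction}. Since $k$ is odd and $y_j$ has odd degree, $\deg{y_j} + k - 1$ is odd, so at each step $p+j+1$ we are always in case (1) of \cref{definition:shriekInductiveConstruction}: for $\elmwedgeU \in \wedge\susp{V(p+j)}$ (which in the pure case means $\elmwedgeU$ is a product of $\susp[k]{x_i}$'s, since $\susp[k-1]{}$ of an odd generator lands in even degree and there are no such suspensions below level $p+j+1$ to worry about beyond the $x_i$-block), we have
\begin{equation*}
  \extgeni{p+j+1}(\elmwedgeU) = \mainterm y_{j+1}\cdot\extgeni{p+j}(\elmwedgeU)
  - (-1)^{\deg{\extgeni{p+j}}}\extgeni{p+j}(\errterm y_{j+1}\cdot\elmwedgeU),
\end{equation*}
and $\extgeni{p+j+1}$ vanishes on any monomial divisible by $\susp{y_{j+1}}$. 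The base case $j=0$ is \cref{lemma:extgeni_evenpart}, which gives exactly \cref{item:extgeni_top} (with $n=0$, $\mainterm y_{[0]} = 1$) and \cref{item:extgeni_bottom} (with $n > 0 = j$, giving $0$); \cref{item:extgeni_middle} is vacuous when $j = 0$.

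For the inductive step, fix $I \subset [p]$ with $n = \len{I}$ and apply the recursion with $\elmwedgeU = \susp[k]{x_{[p]\setminus I}}$. The first term is $\mainterm y_{j+1}$ times $\extgeni{p+j}(\susp[k]{x_{[p]\setminus I}})$, which by the inductive hypothesis is controlled in all three regimes. The second term requires expanding $\errterm y_{j+1}\cdot\susp[k]{x_{[p]\setminus I}} = (-1)^k\sum_i \alpha_{j+1,i}\,\susp[k]{x_i}\,\susp[k]{x_{[p]\setminus I}}$; only the terms with $i \in I$ survive (else $(\susp[k]{x_i})^2 = 0$ since $\susp[k]{x_i}$ has odd degree as $k$ is odd), so this is a sum over $i \in I$ of $\pm\alpha_{j+1,i}\cdot\susp[k]{x_{[p]\setminus(I\setminus\{i\})}}$, i.e.\ over index sets of size $n-1$. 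Feeding these into $\extgeni{p+j}$ via the inductive hypothesis: in case \cref{item:extgeni_top} ($n=0$) the second term vanishes and we get $\extgeni{p+j+1}(\susp[k]{x_{[p]}}) = \mainterm y_{j+1}\cdot\mainterm y_{[j]} = \mainterm y_{[j+1]}$; in case \cref{item:extgeni_middle} ($n < j+1$, i.e.\ $n \le j$) both terms land in the ideal $(\mainterm y_1,\ldots,\mainterm y_{j+1})$ — the first obviously, the second because $\extgeni{p+j}$ applied to size-$(n-1)$ index sets with $n-1 < j$ lands in $(\mainterm y_1,\ldots,\mainterm y_j)$ when $n-1<j$, and when $n-1 = j$ (so $n = j+1$, excluded here) or $n-1 > j$ one uses the other cases; in case \cref{item:extgeni_bottom} with $n = j+1$, the first term is divisible by $\mainterm y_{j+1}$ hence lies in the ideal, but we must show it does \emph{not} contribute to the "determinant part" — here the key point is that the claimed answer $\det\bigl((\alpha_{t,i_r})_{1\le t,r\le j+1}\bigr)$ should be read modulo the ideal $(\mainterm y_1,\ldots,\mainterm y_{j+1})$ is not quite right; rather, one expands the determinant along the last row and matches termwise against $(-1)^k\sum_{r}\pm\alpha_{j+1,i_r}\cdot\extgeni{p+j}(\susp[k]{x_{[p]\setminus(I\setminus\{i_r\})}})$, each inner factor being a $j\times j$ determinant by the inductive hypothesis (case $n-1 = j$), and checking the signs $(-1)^{\deg{I}+p(j+1)}$ come out right via cofactor expansion; finally for $n > j+1$, the first term lies in the ideal and after reducing one sees it must vanish by a degree/dimension count (there is no room for a nonzero element of the required degree), while the second term is a sum of size-$(n-1)$ contributions with $n-1 > j$, which vanish by the inductive hypothesis.

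The main obstacle will be \cref{item:extgeni_bottom} in the boundary case $n = j+1$: tracking the signs through the cofactor expansion of the determinant and reconciling the $(-1)^k$ from $\errterm$, the $(-1)^{\deg{\extgeni{p+j}}}$ from $\Phi$, the Koszul signs from commuting $\alpha_{j+1,i}$ past the $\susp[k]{x}$'s, and the sign $(-1)^{\deg I}$ in the inductive hypothesis, so that they collapse to exactly $(-1)^{\deg I + p(j+1)}$. A secondary subtlety is being careful that in case \cref{item:extgeni_middle} the "+$\mainterm y_{j+1}\cdot(\text{ideal element})$" term genuinely stays in the ideal generated by $\mainterm y_1,\ldots,\mainterm y_{j+1}$ and does not accidentally produce something outside it — this is automatic since $\mainterm y_{j+1}$ itself is a generator — and that the case analysis on $n$ versus $j$ partitions correctly when we pass from $j$ to $j+1$ (the index sets shrinking from size $n$ to size $n-1$). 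I would organize the write-up as: state the recursion specialized to the pure odd case; dispose of \cref{item:extgeni_top}; dispose of \cref{item:extgeni_middle}; then do the determinant bookkeeping for \cref{item:extgeni_bottom}, with the sign computation isolated as the crux.
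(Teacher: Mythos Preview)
Your approach is essentially the same as the paper's: induction on $j$ with base case \cref{lemma:extgeni_evenpart}, using case~(1) of \cref{definition:shriekInductiveConstruction} at each step, and disposing of the three items in the same order. The recursion you write down, the treatment of \cref{item:extgeni_top} and \cref{item:extgeni_middle}, and the cofactor-expansion strategy for the determinant in \cref{item:extgeni_bottom} all match the paper.

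There is, however, one concrete gap in your handling of \cref{item:extgeni_bottom}. In both subcases $n=j+1$ and $n>j+1$, you treat the first recursion term $\mainterm y_{j+1}\cdot\extgeni{p+j}(\susp[k]{x_{[p]\setminus I}})$ as merely ``lying in the ideal'' and then either leave it dangling (case $n=j+1$) or invoke an unspecified ``degree/dimension count'' to make it vanish (case $n>j+1$). Neither maneuver works: the statement of \cref{item:extgeni_bottom} is an exact equality, not a congruence modulo an ideal, and there is no degree obstruction forcing that term to be zero. The correct observation, which the paper uses, is much simpler: since $\len{I}=n\geq j+1>j$, the inductive hypothesis (namely \cref{item:extgeni_bottom} itself, in the $n>j$ branch) gives $\extgeni{p+j}(\susp[k]{x_{[p]\setminus I}})=0$ directly, so the first term vanishes outright. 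Once you insert this, your cofactor expansion for $n=j+1$ and your vanishing argument for $n>j+1$ go through exactly as you outline, and the sign bookkeeping you flag as the crux is indeed the only remaining work.
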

\begin{proof}
  We prove the formulas by induction on \(j\).
  The case \(j=0\) is already proved in \cref{lemma:extgeni_evenpart}.
  Assume that \(j\geq 1\) and we already have the formulas for \(\extgeni{p+j-1}\).

  By \cref{definition:shriekInductiveConstruction},
  we have
  \begin{align}
    & \extgeni{p+j}(\susp[k]{x_{[p]\setminus I}})
    = \Phi(\extgeni{p+j-1})(\susp[k]{x_{[p]\setminus I}}) \\
    &= \mainterm y_j\cdot \extgeni{p+j-1}(\susp[k]{x_{[p]\setminus I}})
      + (-1)^{p+j-1} \extgeni{p+j-1}(\errterm y_j\cdot \susp[k]{x_{[p]\setminus I}}) \\
    &= \mainterm y_j\cdot \extgeni{p+j-1}(\susp[k]{x_{[p]\setminus I}})
      + (-1)^{p+j-1} \extgeni{p+j-1}\left(
      (-1)^k\sum_{1\leq i\leq p} \alpha_{ji}\cdot \susp[k]{x_i} \cdot \susp[k]{x_{[p]\setminus I}}
      \right) \\
    &= \mainterm y_j\cdot \extgeni{p+j-1}(\susp[k]{x_{[p]\setminus I}})
      + (-1)^{p+j}\sum_{1\leq r\leq j}(-1)^{i_r-r}\alpha_{j,i_r}\cdot\extgeni{p+j-1}
      \left(\susp[k]{x_{[p]\setminus I_r}}\right),
      \label{align:extgeni_induction}
  \end{align}
  where \(I_r = I\setminus \{i_r\}\).

  First we prove \cref{item:extgeni_top}.
  Since \(\deg{\susp[k]{x_i}}\) is odd, we have
  \begin{math}
    \errterm y_j\cdot \susp[k]{x_{[p]}}
    = (-1)^k\sum_i \alpha_{ji}\cdot \susp[k]{x_i} \cdot \susp[k]{x_{[p]}}
    = 0.
  \end{math}
  Hence we have
  \begin{align}
    \extgeni{p+j}(\susp[k]{x_{[p]}})
    &= \Phi(\extgeni{p+j-1})(\susp[k]{x_{[p]}}) \\
    &=\mainterm y_j\cdot \extgeni{p+j-1}(\susp[k]{x_{[p]}})
      \pm \extgeni{p+j-1}(\errterm y_j\cdot \susp[k]{x_{[p]}}) \\
    &= \mainterm y_j \cdot \mainterm y_{[j-1]} = \mainterm y_{[j]}.
  \end{align}

  Next we prove \cref{item:extgeni_middle}.
  Assume \(n < j\).
  Then, for any \(r\),
  we have
  \begin{math}
    \extgeni{p+j-1}\left(\susp[k]{x_{[p]\setminus I_r}}\right)
    \in (\mainterm y_1,\ldots, \mainterm y_{j-1})
  \end{math}
  by the induction hypothesis,
  since \(\len{I_r} = n-1 < j-1\).
  Thus we have
  \begin{math}
    \extgeni{p+j}(\susp[k](x_{[p]\setminus I}))
    \in (\mainterm y_1,\ldots, \mainterm y_j)
  \end{math}
  by \cref{align:extgeni_induction}.

  Finally we prove \cref{item:extgeni_bottom}.
  Assume \(n \geq j\).
  Since \(\len{I} = n > j-1\),
  we have \(\extgeni{p+j-1}(\susp[k]{x_{[p]\setminus I}}) = 0\)
  by the induction hypothesis.
  Hence \cref{align:extgeni_induction} reduces to the equation
  \begin{equation}
    \label{equation:extgeni_bottom}
    \extgeni{p+j}(\susp[k]{x_{[p]\setminus I}})
      = (-1)^{p+j}\sum_{1\leq r\leq j}(-1)^{i_r-r}\alpha_{j,i_r}\cdot\extgeni{p+j-1}
      \left(\susp[k]{x_{[p]\setminus I_r}}\right).
  \end{equation}


  If \(n > j\),
  since \(\len{I_r} = n-1 > j-1\),
  we have
  \begin{math}
    \extgeni{p+j-1}(\susp[k]x_{[p]\setminus I_r}) = 0
  \end{math}
  and hence
  \begin{math}
    \extgeni{p+j}(\susp[k]{x_{[p]\setminus I}}) = 0
  \end{math}
  by \cref{equation:extgeni_bottom}.
  This proves \cref{item:extgeni_bottom} in the case \(n > j\).

  \newcommand{\minor}{M}
  Next we assume \(n = j\).
  Let \(\minor_{u,r}\) be the minor determinants of
  the \(j\times j\) matrix
  \begin{math}
    A = \left(
      \alpha_{t,i_s}
    \right)_{1\leq t,s\leq j},
  \end{math}
  i.e.\ %
  \begin{math}
    \minor_{u,r} = \det\left(
      (\alpha_{t,i_s})_{t\neq u, s\neq r}
    \right).
  \end{math}
  Since \(\deg{I_r} = j-1\),
  we have
  \begin{math}
    \extgeni{p+j-1}(\susp[k]x_{[p]\setminus I_r})
    = (-1)^{\deg{I_r}+p(j-1)}\minor_{j,r}
  \end{math}
  by the induction hypothesis.
  Hence, by \cref{equation:extgeni_bottom}, we have
  \begin{align}
    \extgeni{p+j}(\susp[k]{x_{[p]\setminus I}})
    &= (-1)^{p+j}\sum_{1\leq r\leq j}(-1)^{i_r-r}\alpha_{j,i_r}\cdot(-1)^{\deg{I_r}+p(j-1)}\minor_{j,r} \\
    &= (-1)^{\deg{I}+pj}\sum_{1\leq r\leq j} (-1)^{j+r}\minor_{j,r} \\
    &= (-1)^{\deg{I}+pj} \det\bigl((\alpha_{t, i_r})_{1\leq t, r\leq j}\bigr).
  \end{align}
  This proves \cref{item:extgeni_bottom} in the case \(n=j\).
\end{proof}

\begin{proposition}
  \label{proposition:nonTriviality_pure}
  If
  \begin{math}
    \extgen \in
    \hom_{\msphere{k-1}}\left(\mdisk{k}, \msphere{k-1}\right)
  \end{math}
  is a chain map satisfying
  \begin{math}
    \extgen(\susp[k]{x_{[p]}}) = \mainterm y_{[q]},
  \end{math}
  then we have
  \begin{equation}
    [\extgen] \neq 0 \in \ext_{\msphere{k-1}}\left(\wedge V, \msphere{k-1}\right).
  \end{equation}
\end{proposition}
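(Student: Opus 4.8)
The plan is to show that $\extgen$ represents a nonzero class by pairing it against a suitable cycle in the homology of the complex $\hom_{\msphere{k-1}}(\mdisk{k}, \msphere{k-1})$, using the fact that $\ext^{\bar m}_{\cochain{\spheresp{k-1}{M}}}(\cochain{M},\cochain{\spheresp{k-1}{M}}) \cong \K$ is one-dimensional (via \cref{theorem:extSphereSpace}, noting $\bar m$ is the dimension of the Gorenstein space $\Omega^{k-1}M$, which is finite-dimensional here). So it suffices to exhibit one chain map $\extgen_0$ that we already know to be nonzero in $\ext$ and to show that $\extgen$ and $\extgen_0$ differ by a coboundary — or, more efficiently, to argue directly that no chain map with $\extgen(\susp[k]{x_{[p]}}) = \mainterm y_{[q]}$ can be a coboundary.

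First I would take $\extgen_0 = \extgeni{p+q} = \extgeni{n}$, the chain map constructed inductively in \cref{subsection:modelOfShriek_candidate}. By \cref{proposition:extgeni_oddpart}\cref{item:extgeni_top} (applied with $j = q$, $I = \emptyset$), we have $\extgeni{n}(\susp[k]{x_{[p]}}) = \mainterm y_{[q]}$, so $\extgeni{n}$ is of the required form; since $\Phi$ is a chain map it is genuinely a cycle in the $\hom$-complex. Second, I would observe that the degree of $\mainterm y_{[q]}$ — which is $\sum_j \deg{\mainterm y_j}$, and $\deg{\mainterm y_j}$ equals $\deg{y_j}$ when $k=1$ and $\deg{y_j}+(k-1)$ when $k \geq 2$ — together with the degrees $\deg{\susp[k]{x_{[p]}}} = \sum_i(\deg{x_i}-k)$ pins down the homological degree of the class $[\extgen]$ to be exactly $\bar m$; this uses the formula $\bar m = m - (q-p)(k-1)$ from \cref{remark:assumpIsNatural} together with $m = \sum_j b_j + \sum_i(1-a_i)$, and here $p = q$ is not yet assumed, so the degree bookkeeping must be done in general. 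Third, since $\ext^{\bar m}$ is one-dimensional, it suffices to show $[\extgeni{n}] \neq 0$ and that $[\extgen] = [\extgeni{n}]$; the latter follows because $\extgen - \extgeni{n}$ vanishes on $\susp[k]{x_{[p]}}$, and I would argue that the top class $\susp[k]{x_{[p]}}$ detects $\ext^{\bar m}$ — i.e.\ evaluation at $\susp[k]{x_{[p]}}$ descends to a well-defined injective (hence, by dimension count, bijective) linear map $\ext^{\bar m} \to \K\cdot\mainterm y_{[q]}$.

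The cleanest route to the last point, and the one I would actually write, is: show that any coboundary $d\chi$ (for $\chi \in \hom_{\msphere{k-1}}(\mdisk{k},\msphere{k-1})$ of degree $\bar m - 1$) evaluated at $\susp[k]{x_{[p]}}$ lands in the ideal $(\mainterm y_1,\dots,\mainterm y_q) \subset \msphere{k-1}$ (equivalently, in the augmentation ideal of the fiber direction), whereas $\mainterm y_{[q]}$ does \emph{not} lie in that ideal modulo the relevant lower-filtration terms — more precisely, $\mainterm y_{[q]}$ is a nonzero class in the appropriate associated-graded or quotient where the ideal generated by lower $\mainterm y_i$'s and by $d$ of fiber generators has been killed. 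Here I would use the pure hypothesis crucially: $d(\susp[k]{x_i}) = \mainterm x_i \pm (\ldots)$ and $d(\susp[k]{y_j}) = \mainterm y_j + \errterm y_j$ with $\errterm y_j \in (\susp[k]{x_i})\cdot\wedge\evenpart V$, so the differential on $\mdisk{k}$ has a triangular form with respect to the $\susp[k]{x_i}$, $\susp[k]{y_j}$ generators, and one can read off that the cohomology in top fiber-degree is detected exactly by the coefficient of $\susp[k]{x_{[p]}}$ modulo $(\mainterm y_j)$.

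The main obstacle I anticipate is precisely this last step: controlling the image of $d$ on $\hom_{\msphere{k-1}}(\mdisk{k},\msphere{k-1})$ in top degree and showing $\mainterm y_{[q]}$ survives. One has to be careful that $\chi$ need not vanish on the lower generators $\susp[k]{x_{[p]\setminus I}}$, so $d\chi$ applied to $\susp[k]{x_{[p]}}$ picks up contributions $\pm \chi(\susp[k]{x_{[p]}}\,\text{-derivative terms})$ and $\pm \chi(\errterm$-terms$)$; the point is that every such contribution carries a factor that lies in $(\mainterm y_1,\dots,\mainterm y_q)$ or in the ideal generated by the even-fiber relations, by the same triangularity used in \cref{proposition:extgeni_oddpart}\cref{item:extgeni_middle}. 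If needed, I would instead invoke \cref{theorem:extSphereSpace} to know $\ext^{\bar m} \cong \cohom[0]{M} \cong \K$ outright and simply verify that the linear functional "coefficient of $\susp[k]{x_{[p]}}$, read in $\msphere{k-1}/(\mainterm y_1,\dots,\mainterm y_q)$" is nonzero on $[\extgeni{n}]$ and well-defined on $\ext^{\bar m}$ — that reduces the whole proposition to the single computation $\extgeni{n}(\susp[k]{x_{[p]}}) = \mainterm y_{[q]} \neq 0$ in that quotient.
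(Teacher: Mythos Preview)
Your plan has the right shape---detect non-triviality by evaluating at $\susp[k]{x_{[p]}}$ and showing the result cannot come from a coboundary---but the execution contains a genuine error: you quotient by the wrong ideal. You propose to read the value in $\msphere{k-1}/(\mainterm y_1, \ldots, \mainterm y_q)$, but $\mainterm y_{[q]}$ \emph{lies in} the ideal $(\mainterm y_1, \ldots, \mainterm y_q)$, so your detecting functional sends $\extgen$ to zero. Conversely, it is not true that a coboundary $(d\chi)(\susp[k]{x_{[p]}})$ lands in $(\mainterm y_1,\ldots,\mainterm y_q)$: the term $d\bigl(\chi(\susp[k]{x_{[p]}})\bigr)$ lies in $\im(d) \subset \msphere{k-1}$, which for a pure $(\wedge V, d)$ is contained in the ideal generated by $\evenpart{V}$ and the $\mainterm x_i$, not by the $\mainterm y_j$; and the remaining term $\chi\bigl(d(\susp[k]{x_{[p]}})\bigr)$ lies in $(\mainterm x_1,\ldots,\mainterm x_p)$ since $d(\susp[k]{x_i}) = \mainterm x_i$ by purity. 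Your fallback route has the same defect, since it again reads the answer modulo $(\mainterm y_1,\ldots,\mainterm y_q)$.

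The paper's proof is exactly your strategy with the ideal corrected. It takes $I \subset \msphere{k-1}$ to be generated by all of $V$ together with $\mainterm x_1, \ldots, \mainterm x_p$; purity gives $dI \subset I$, and $\msphere{k-1}/I \cong \wedge(\mainterm y_1, \ldots, \mainterm y_q)$ with zero differential. The Ext--Tor evaluation pairing
\[
\ext_{\msphere{k-1}}(\wedge V, \msphere{k-1}) \otimes \tor_{\msphere{k-1}}(\wedge V, \msphere{k-1}/I) \longrightarrow \cohom{\msphere{k-1}/I}
\]
then sends $[\extgen] \otimes [\susp[k]{x_{[p]}} \otimes 1]$ to $\mainterm y_{[q]} \neq 0$, so $[\extgen] \neq 0$. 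Note this makes no appeal to the one-dimensionality of $\ext^{\bar m}$, so the detour through \cref{theorem:extSphereSpace} (and the attendant need to match $[\extgen]$ with $[\extgeni{n}]$) is unnecessary.
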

\begin{proof}
  Let \(I \subset \msphere{k-1}\) be the ideal generated by
  \begin{math}
    x_1\otimes 1, \ldots, x_p\otimes 1,
    y_1\otimes 1, \ldots, y_q\otimes 1,
    \mainterm x_1, \ldots, \mainterm x_p.
  \end{math}
  Note that \(d(I) \subset I\)
  since \((\wedge V, d)\) is pure.
  Consider the evaluation map
  \begin{equation}
    \ev\colon
    \ext_{\msphere{k-1}}(\wedge V, \msphere{k-1})
    \otimes \tor_{\msphere{k-1}}(\wedge V, \msphere{k-1} / I)
    \to \tor_{\msphere{k-1}}(\msphere{k-1}, \msphere{k-1} / I)
    \cong \wedge (\mainterm y_1, \ldots, \mainterm y_q).
  \end{equation}
  Using \(\mdisk{k}\) as a resolution of \((\wedge V, d)\) over \(\msphere{k-1}\),
  we have elements
  \([\extgen] \in \ext_{\msphere{k-1}}(\wedge V, \msphere{k-1})\)
  and
  \([\susp[k]{x_{[p]}}\otimes 1] \in \tor_{\msphere{k-1}}(\wedge V, \msphere{k-1} / I)\).
  Then we have
  \begin{equation}
    \ev([\extgen]\otimes [\susp[k]{x_{[p]}}\otimes 1])
    = \mainterm y_{[q]} \neq 0\in \wedge (\mainterm y_1, \ldots, \mainterm y_q).
  \end{equation}
  This proves the proposition.
\end{proof}

\begin{corollary}
  \label{corollary:shriekJacobian}
  Assume \(p \leq q\), i.e.\ \(\dim\evenpart{V} \leq \dim\oddpart{V}\).
  Then there is a chain map
  \begin{math}
    \extgen \in
    \hom_{\msphere{k-1}}\left(\mdisk{k}, \msphere{k-1}\right)
  \end{math}
  such that
  \begin{enumerate}
    \item
      \begin{math}
        [\extgen] \neq 0 \in
        \ext_{\msphere{k-1}}\left(\wedge V, \msphere{k-1}\right).
      \end{math}
    \item
      \begin{math}
        \inclconstmodel \circ \extgen(1) =
        \begin{cases}
          \det\left(
            \left(
              \frac{\partial(dy_j)}{\partial x_i}
            \right)_{1\leq i, j\leq p}
          \right)
          \in \wedge \evenpart{V}, & \text{if \(p=q\),} \\
          0, & \text{if \(p<q\).}
        \end{cases}
      \end{math}
  \end{enumerate}
\end{corollary}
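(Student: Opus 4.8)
The plan is to take $\extgen$ to be the chain map $\extgeni{p+q}$ built in \cref{subsection:modelOfShriek_candidate}, rescaled by a suitable sign. Recall that $\extgeni{p+q}$ is indeed a chain map in $\hom_{\msphere{k-1}}(\mdisk{k},\msphere{k-1})$, being obtained from the cocycle $\extgeni{0}=\id_\K$ by iterating the odd-degree chain map $\Phi$. I would set $\varepsilon=(-1)^{\deg{[p]}+p^{2}}$ and put $\extgen=\varepsilon\,\extgeni{p+q}$; the precise value of $\varepsilon$ matters only in the case $p=q$.

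For the first assertion, I would invoke \cref{item:extgeni_top} of \cref{proposition:extgeni_oddpart} with $j=q$ and $I=\emptyset$, which gives $\extgeni{p+q}(\susp[k]{x_{[p]}})=\mainterm y_{[q]}$. Hence \cref{proposition:nonTriviality_pure} applies to $\extgeni{p+q}$ and yields $[\extgeni{p+q}]\neq0$ in $\ext_{\msphere{k-1}}(\wedge V,\msphere{k-1})$; since $\varepsilon$ is a unit, $[\extgen]=\varepsilon[\extgeni{p+q}]\neq0$ as well.

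For the second assertion, I would note that the unit $1\in\mdisk{k}$ equals $\susp[k]{x_{[p]\setminus I}}$ for $I=[p]$, and apply \cref{proposition:extgeni_oddpart} with $j=q$ and this $I$, so that $n=\len{I}=p$. If $p<q$, then $n<j$ and \cref{item:extgeni_middle} places $\extgeni{p+q}(1)$ in the ideal $(\mainterm y_1,\dots,\mainterm y_q)\subset\msphere{k-1}$; since $\inclconstmodel$ is an algebra map with $\inclconstmodel(\mainterm y_j)=0$ for all $j$ — immediate from the definitions of $\mainterm y_j$ and $\inclconstmodel$, both when $k=1$ and when $k\geq2$ — this gives $\inclconstmodel\circ\extgen(1)=0$. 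If $p=q$, then $n=j$ and \cref{item:extgeni_bottom} gives $\extgeni{p+q}(1)=\varepsilon\det\bigl((\alpha_{t,r})_{1\leq t,r\leq p}\bigr)$, hence $\extgen(1)=\det\bigl((\alpha_{t,r})_{1\leq t,r\leq p}\bigr)$; applying the algebra map $\inclconstmodel$ entrywise and using \cref{equation:partial_derivative} together with the transpose-invariance of the determinant, I obtain $\inclconstmodel\circ\extgen(1)=\det\bigl((\partial(dy_j)/\partial x_i)_{1\leq i,j\leq p}\bigr)$, as required.

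I do not expect a real obstacle here, since all of the substance is already contained in \cref{proposition:extgeni_oddpart} and \cref{proposition:nonTriviality_pure}. The only step needing care is the sign bookkeeping: $\varepsilon$ is arranged so that the determinant in the second assertion appears with coefficient $1$, and this choice is harmless for the first assertion, which is insensitive to rescaling by a unit. The remaining points — that $\inclconstmodel$ respects products and annihilates each $\mainterm y_j$ — are elementary.
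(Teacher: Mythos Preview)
Your proposal is correct and follows essentially the same route as the paper: take $\extgen$ to be a sign-rescaling of the inductively constructed map, use \cref{proposition:extgeni_oddpart}\cref{item:extgeni_top} together with \cref{proposition:nonTriviality_pure} for nontriviality, and \cref{item:extgeni_middle}/\cref{item:extgeni_bottom} with \cref{equation:partial_derivative} for the value at $1$. Your sign $\varepsilon=(-1)^{\deg{[p]}+p^2}$ agrees with the paper's $(-1)^{\frac{1}{2}p(p+3)}$ since their difference in the exponent is $p(p-1)$, and your use of $\extgeni{p+q}$ is in fact the correct index (the paper writes $\extgeni{2p}$, which only matches when $p=q$).
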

\begin{proof}
  Define
  \begin{math}
    \extgen = (-1)^{\frac{1}{2}p(p+3)} \extgeni{2p}.
  \end{math}
  By \cref{proposition:extgeni_oddpart} \cref{item:extgeni_top}
  and \cref{proposition:nonTriviality_pure},
  we have
  \begin{math}
    [\extgen] \neq 0 \in
    \ext_{\msphere{k-1}}\left(\wedge V, \msphere{k-1}\right).
  \end{math}
  If \(p = q\),
  by \cref{equation:partial_derivative}
  and \cref{proposition:extgeni_oddpart} \cref{item:extgeni_bottom},
  we have
  \begin{math}
    \inclconstmodel \circ \extgen(1)
    = \det\left(
      \frac{\partial(dy_j)}{\partial x_i}
    \right).
  \end{math}
  If \(p < q\),
  by \cref{proposition:extgeni_oddpart} \cref{item:extgeni_middle},
  we have
  \begin{math}
    \inclconstmodel \circ \extgen(1) = 0
  \end{math}
  since \(\mainterm y_j \in \ker \inclconstmodel\).
\end{proof}

\begin{remark}
  We can generalize the non-triviality of the chain map \(\extgen = \extgeni{\dim V}\)
  using the method and notion given in \cite{wakatsuki16}.
  Let \((\wedge V, d)\) be a \textit{semi-pure} Sullivan algebra,
  i.e.\ \(\dim V < \infty\) and
  \(d(\evenpart{V})\) is contained in
  the ideal \(\wedge V\cdot \evenpart{V}\) generated by \(\evenpart{V}\).
  Take bases \(x_1,\ldots,x_p\) and \(y_1,\ldots,y_q\)
  of \(\evenpart{V}\) and \(\oddpart{V}\), respectively.
  By induction on \(\dim V\), we have
  \(\extgen(\susp[k]{x_{[p]}}) = \mainterm y_{[q]}\)
  along with
  \(\extgen(\elmwedgeU) = 0\) for any
  \begin{math}
    \elmwedgeU\in (\susp[k]{y_1},\ldots,\susp[k]{y_q})
    \subset \mdisk{k}.
  \end{math}
  The first equation
  \([\extgen] \neq 0 \in \ext_{\msphere{k-1}}(\wedge V, \msphere{k-1})\),
  since \cref{proposition:nonTriviality_pure} also holds for a semi-pure Sullivan algebra.
\end{remark}

\subsection{Proof of \cref{proposition:shriekInclconst_EulerChar}}
In this subsection, we prove \cref{proposition:shriekInclconst_EulerChar}
using the chain map in \cref{corollary:shriekJacobian}.

\begin{definition}[{\cite[Section 32]{felix-halperin-thomas01}}]
  A 1-connected space \(M\) is \textit{rationally elliptic} if
  \(\dim_\K\left(\bigoplus_n\cohom[n]{M}\otimes\K\right) < \infty\) and
  \(\dim_\K\left(\bigoplus_n\pi_n(M)\otimes\K\right) < \infty\).
\end{definition}

First we recall a fundamental theorem on rationally elliptic space.
\begin{theorem}[{\cite[Proposition 32.16]{felix-halperin-thomas01}}]
  \label{theorem:elliptic}
  Let \(M\) be a rationally elliptic space.
  Then we have
  \begin{itemize}
    \item \(\chi(M) \geq 0\) and
    \item
      \begin{math}
        \dim_\K\left(\bigoplus_n\pi_{2n}(M)\otimes\K\right)
        \leq \dim_\K\left(\bigoplus_n\pi_{2n-1}(M)\otimes\K\right).
      \end{math}
  \end{itemize}
  Moreover, the following conditions are equivalent:
  \begin{enumerate}
    \item \(\chi(M) > 0\).
    \item
      \begin{math}
        \dim_\K\left(\bigoplus_n\pi_{2n}(M)\otimes\K\right)
        < \dim_\K\left(\bigoplus_n\pi_{2n-1}(M)\otimes\K\right).
      \end{math}
    \item \label{item:pureModel}
      The minimal Sullivan model \((\wedge V, d)\) of \(M\) is pure,
      \(\dim\evenpart{V}=\dim\oddpart{V}=p\), and
      \(dy_1,\ldots,dy_p\) is a regular sequence in \(\wedge\evenpart{V}\),
      where \(\oddpart{V} = \spanv_\K\{y_1,\ldots,y_p\}\).
  \end{enumerate}
\end{theorem}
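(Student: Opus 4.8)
The assertion is the classical structure theorem for rationally elliptic spaces, and the plan is to descend to the minimal Sullivan model and then to its associated \emph{pure} model, where the numerical statements become commutative algebra. Let $(\wedge V,d)$ be the minimal Sullivan model of $M$; ellipticity means $\dim V<\infty$ and $\dim_\K H^{*}(\wedge V,d)<\infty$, and by definition $p:=\dim_\K\bigl(\bigoplus_n\pi_{2n}(M)\otimes\K\bigr)=\dim V^{\mathrm{even}}$ and $q:=\dim_\K\bigl(\bigoplus_n\pi_{2n-1}(M)\otimes\K\bigr)=\dim V^{\mathrm{odd}}$. First I would introduce the pure differential $d_\sigma$ on $\wedge V$ (so $d_\sigma=0$ on $V^{\mathrm{even}}$, and for $v\in V^{\mathrm{odd}}$ the element $d_\sigma v$ is the part of $dv$ lying in $\wedge V^{\mathrm{even}}$), and filter $(\wedge V,d)$ by the number of odd generators occurring in a monomial; the associated graded is $(\wedge V,d_\sigma)$, so there is a finite, hence convergent, spectral sequence with first page $H^{*}(\wedge V,d_\sigma)$ abutting to $H^{*}(\wedge V,d)$. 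By the theory in \cite[Section~32]{felix-halperin-thomas01}, ellipticity of $(\wedge V,d)$ is inherited by $(\wedge V,d_\sigma)$, so the first page is finite dimensional, and since every differential of the spectral sequence raises total degree by one it preserves the Euler characteristic; hence $\chi(M)=\chi\bigl(H^{*}(\wedge V,d_\sigma)\bigr)$. Because $p$ and $q$ depend only on $V$, it therefore suffices to prove the theorem for the pure algebra $(\wedge V,d_\sigma)$.

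For the pure algebra, $\wedge V^{\mathrm{even}}=R:=\K[x_1,\dots,x_p]$ is a polynomial ring and $d_\sigma$ is determined by the homogeneous positive-degree elements $f_j:=d_\sigma y_j\in R$; the complex $(\wedge V,d_\sigma)=\bigl(R\otimes\wedge(y_1,\dots,y_q),\,d_\sigma\bigr)$ is, up to reindexing, the Koszul complex on $f_1,\dots,f_q$, equivalently, each $f_j$ being a non-zero-divisor in the domain $R$, the derived tensor product $R/(f_1)\otimes^{\mathbf L}_R\cdots\otimes^{\mathbf L}_R R/(f_q)$. Ellipticity forces $R/(f_1,\dots,f_q)$ to be finite dimensional, so this ideal is primary to the irrelevant maximal ideal $\mathfrak{m}$; as $\dim R=p$, Krull's height theorem yields $q\ge p$, which is the asserted inequality $\dim_\K\bigl(\bigoplus_n\pi_{2n}(M)\otimes\K\bigr)\le\dim_\K\bigl(\bigoplus_n\pi_{2n-1}(M)\otimes\K\bigr)$. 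If $q=p$, then an $\mathfrak{m}$-primary ideal generated by $p=\dim R$ elements of the Cohen--Macaulay ring $R$ is generated by a regular sequence, so $f_1,\dots,f_p$ is a regular sequence, the Koszul complex is a resolution, $H^{*}(\wedge V,d_\sigma)=R/(f_1,\dots,f_p)$ is finite dimensional and concentrated in even degree, and hence $\chi=\dim_\K R/(f_1,\dots,f_p)>0$; also, $f_1,\dots,f_p$ being a regular sequence is exactly the regular-sequence condition recorded in (3). If $q>p$ (the case $p=0$ being immediate), then $\dim R/(f_j)=p-1$, so $\sum_{j=1}^{q}\dim R/(f_j)=q(p-1)<(q-1)p=(q-1)\dim R$, and Serre's vanishing theorem for intersection multiplicities, available here since $R$ is a polynomial ring over a field, gives $\chi\bigl(H^{*}(\wedge V,d_\sigma)\bigr)=0$. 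In every case $\chi(M)\ge 0$, and $\chi(M)>0$ if and only if $q=p$ if and only if the regular-sequence condition holds; this proves the equivalence of (1), (2) and the numerical content of (3).

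It remains to upgrade the statement ``$\chi(M)>0$'' to ``the minimal Sullivan model $(\wedge V,d)$ is pure.'' When $\chi(M)>0$, the analysis above makes $H^{*}(\wedge V,d_\sigma)$ concentrated in even degree; hence in the spectral sequence every differential, raising total degree by one and so reversing parity, must vanish, the sequence degenerates at the first page, and $H^{*}(M;\K)=H^{*}(\wedge V,d)$ is likewise concentrated in even degree, i.e.\ $H^{\mathrm{odd}}(M;\K)=0$. One then shows, by induction on the degrees of a homogeneous basis of $V$, that the vanishing of $H^{\mathrm{odd}}$ forces $dx=0$ for every $x\in V^{\mathrm{even}}$ and forces the odd-generator component of $d$ on $V^{\mathrm{odd}}$ to vanish, so that $d=d_\sigma$; this is the structure theorem for rationally elliptic $F_0$-spaces in \cite[Section~32]{felix-halperin-thomas01}. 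I expect this last implication --- recovering the rigidity $d=d_\sigma$ from the bare vanishing of $H^{\mathrm{odd}}$ --- to be the principal difficulty, the passage to the pure model and the commutative-algebra inputs (Krull dimension, regular sequences in Cohen--Macaulay rings, Serre vanishing) being essentially routine.
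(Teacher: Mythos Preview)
The paper does not prove this theorem at all: it is quoted as \cite[Proposition 32.16]{felix-halperin-thomas01} and used as a black box, so there is no argument in the paper to compare against. Your outline is essentially the standard proof from \cite[\S32]{felix-halperin-thomas01}: pass to the associated pure Sullivan algebra via the odd-word-length filtration, identify the pure model with a Koszul complex over $R=\K[x_1,\dots,x_p]$, and read off the numerics from commutative algebra.

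Two remarks. First, note that the statement as printed contains a typo: condition~(2) should read
\[
\dim_\K\Bigl(\bigoplus_n\pi_{2n}(M)\otimes\K\Bigr)=\dim_\K\Bigl(\bigoplus_n\pi_{2n-1}(M)\otimes\K\Bigr),
\]
not strict inequality; otherwise (2) and (3) are mutually exclusive. Your argument correctly proves $\chi(M)>0\Longleftrightarrow p=q$, which is the intended content and the only reading consistent with~(3).

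Second, your invocation of Serre's vanishing theorem for the $q$-fold derived tensor product $R/(f_1)\otimes^{\mathbf L}_R\cdots\otimes^{\mathbf L}_R R/(f_q)$ is morally right but heavier than needed, and the multi-factor form you use is not quite the textbook statement (which is for two modules); making it precise requires an iteration or a reduction step. The route actually taken in \cite{felix-halperin-thomas01} is lighter: using the bigrading by total degree and odd word length one gets a Poincar\'e-series identity whose value at $t=1$ gives $\chi=0$ when $q>p$ and $\chi=\prod_j b_j/\prod_i a_i>0$ when $q=p$. Finally, you are right that the implication ``$\chi(M)>0\Rightarrow(\wedge V,d)$ is pure'' is the delicate part; in \cite{felix-halperin-thomas01} it is deduced from the numerical condition $p=q$ (rather than from $H^{\mathrm{odd}}=0$ directly) via a comparison of $(\wedge V,d)$ with its associated pure algebra.
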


Using the theorem with the construction given in \cref{subsection:modelOfShriek_pure},
we have the following proposition.

\begin{proposition}
  \label{proposition:NonZeroJacobian}
  Let \(M\) be a rationally elliptic space satisfying the conditions in \cref{theorem:elliptic},
  and \((\wedge V, d)\) its minimal Sullivan model.
  Write
  \(\evenpart{V} = \spanv_\K\{x_1,\ldots,x_p\}\) and
  \(\oddpart{V} = \spanv_\K\{y_1,\ldots,y_p\}\).
  Then we have
  \begin{equation}
    \left[
      \det\left(
        \left(
          \frac{\partial(dy_j)}{\partial x_i}
        \right)_{1\leq i, j\leq p}
      \right)
    \right]
    \neq 0 \in \cohom{\wedge V} \ \bigl(\cong \cohom{M}\bigr).
  \end{equation}
\end{proposition}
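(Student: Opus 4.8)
The plan is to pass to a minimal Sullivan model of \(M\), use purity to identify \(\cohom{\wedge V}\) with an explicit Artinian complete intersection by means of a Koszul complex, and then invoke the classical fact that the Jacobian determinant generates the one-dimensional socle of such an algebra.

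First I would invoke \cref{theorem:elliptic}: since \(M\) satisfies its equivalent conditions (in particular \(\chi(M)>0\)), condition \cref{item:pureModel} says that the minimal Sullivan model \((\wedge V,d)\) is pure, that \(\dim\evenpart{V}=\dim\oddpart{V}=p\), and that \(dy_1,\ldots,dy_p\) is a regular sequence in the polynomial algebra \(\wedge\evenpart{V}=\K[x_1,\ldots,x_p]\). Purity means exactly that \((\wedge V,d)\) is the Koszul complex of this sequence: as graded algebras \(\wedge V=\K[x_1,\ldots,x_p]\otimes\wedge(y_1,\ldots,y_p)\), with \(d\) vanishing on the first factor and \(dy_j\in\K[x_1,\ldots,x_p]\). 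Since \(dy_1,\ldots,dy_p\) is a regular sequence, this Koszul complex is a resolution, and we obtain a graded algebra isomorphism \(\cohom{\wedge V}\cong R:=\K[x_1,\ldots,x_p]/(dy_1,\ldots,dy_p)\) under which a cocycle \(f\in\K[x_1,\ldots,x_p]\) represents the zero cohomology class precisely when \(f\in(dy_1,\ldots,dy_p)\). Thus the proposition is equivalent to the assertion that the Jacobian \(J:=\det\left(\left(\frac{\partial(dy_j)}{\partial x_i}\right)_{1\leq i,j\leq p}\right)\) does not belong to the ideal \((dy_1,\ldots,dy_p)\).

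Next I would observe that \(R\) is a graded Artinian complete intersection, hence Gorenstein, so its socle is one-dimensional and sits in the single top degree \(\sum_j\deg{dy_j}-\sum_i\deg{x_i}\) (which, by the dimension formula recalled in \cref{remark:assumpIsNatural} and using \(p=q\), equals \(m=\dim M\)). A direct count shows that \(\deg{J}\) equals \(\sum_j\deg{dy_j}-\sum_i\deg{x_i}\) as well, so \([J]\) lies in this one-dimensional top piece of \(R\) and is therefore either zero or a generator of the socle. The remaining, essential, point is to exclude \([J]=0\). This is the classical fact that, over a field of characteristic zero, the image of the Jacobian determinant generates the socle of an Artinian complete intersection \(\K[x_1,\ldots,x_p]/(f_1,\ldots,f_p)\); I would either quote it, or derive it from the self-duality of the Koszul resolution — which realizes \(\ext^p_{\K[x_1,\ldots,x_p]}(R,\K[x_1,\ldots,x_p])\cong R\) and, when the Gorenstein generator is pulled back along the multiplication \(R\otimes_\K R\to R\), returns \(J\) — together with the Euler--Jacobi (local duality) residue identity \(\res(J)=\dim_\K R\neq0\). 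In either case \([J]\neq0\), which is the assertion.

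The one genuine obstacle is this last step: ruling out that the Jacobian happens to lie in \((dy_1,\ldots,dy_p)\), equivalently showing that it generates the socle of the complete intersection \(R\). Everything preceding it is the standard Koszul description of a pure Sullivan algebra together with bookkeeping of degrees; the only care needed there is to confirm that the isomorphism \(\cohom{\wedge V}\cong R\) carries the class of a cocycle \(f\in\K[x_1,\ldots,x_p]\) to \(f\bmod(dy_1,\ldots,dy_p)\), so that the vanishing \([\det(\ldots)]=0\) in cohomology translates into membership of \(J\) in the ideal.
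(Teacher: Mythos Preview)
Your proposal is correct and complete: reducing to the Koszul complex of the regular sequence \(dy_1,\ldots,dy_p\), identifying \(\cohom{\wedge V}\) with the Artinian complete intersection \(R=\K[x_1,\ldots,x_p]/(dy_1,\ldots,dy_p)\), and then invoking the Scheja--Storch/Euler--Jacobi fact that in characteristic zero the Jacobian generates the socle of \(R\) is exactly the classical route. The paper itself acknowledges this in the remark immediately following the proposition, citing \cite{smith82}.

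However, the paper deliberately gives a different argument, one coming from string topology rather than commutative algebra. It applies \cref{corollary:shriekJacobian} in the special case \(k=1\) to produce a nonzero class \([\extgen]\in\ext^m_{\wedge V\tpow2}(\wedge V,\wedge V\tpow2)\) with \(\inclconstmodel\circ\extgen(1)=\det\bigl(\partial(dy_j)/\partial x_i\bigr)\). Since this \(\ext\) group is one-dimensional, \([\extgen]\) must agree up to a nonzero scalar with the diagonal shriek \(\shriek\diag\), and then \cref{proposition:diagonalClass} identifies \(\diag^*\circ(\exttohom{\shriek\diag})(1)\) with \(\chi(M)\omega\). Thus the Jacobian class equals \(\chi(M)\omega\) up to scalar, and \(\chi(M)\neq 0\) by hypothesis. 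Your approach is shorter and self-contained from a commutative-algebra standpoint, but the paper's approach has the virtue of tying the Jacobian directly to the Euler characteristic via the diagonal class, and of reusing the explicit shriek-map machinery (\cref{definition:shriekInductiveConstruction} through \cref{corollary:shriekJacobian}) that is needed anyway for the case \(k>1\).
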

\begin{proof}
  By \cref{corollary:shriekJacobian} for \(k=1\),
  we have a chain map
  \begin{math}
    \extgen \in \hom_{\wedge V\tpow2}(\wedge V, \wedge V\tpow2)
  \end{math}
  such that
  \begin{math}
    [\extgen] \neq 0 \in
    \ext^m_{\wedge V\tpow2}(\wedge V, \wedge V\tpow2)
  \end{math}
  and
  \begin{math}
    \inclconstmodel\circ\extgen(1) = \det
    \left(
      \frac{\partial(dy_j)}{\partial x_i}
    \right)
    \in \wedge \evenpart{V}.
  \end{math}
  Since \(\inclconstmodel\) is a model of \(\diag\colon M\to M\times M\),
  we have
  \begin{equation}
    \diag^*\circ(\exttohom{[\extgen]})(1) =
    [\inclconstmodel\circ\extgen(1)] =
    \left[\det
      \left(
        \frac{\partial(dy_j)}{\partial x_i}
      \right)
    \right].
  \end{equation}
  Since
  \begin{math}
    \ext^m_{\wedge V\tpow2}(\wedge V, \wedge V\tpow2)
    \cong \ext^m_{\cochain{M\times M}}(\cochain{M},\cochain{M\times M})
    \cong \K,
  \end{math}
  we have
  \begin{math}
    [\extgen] = \shriek\diag
  \end{math}
  (up to scalar multiplication).
  Hence by \cref{proposition:diagonalClass}, we have
  \begin{equation}
    \left[\det
      \left(
        \frac{\partial(dy_j)}{\partial x_i}
      \right)
    \right]
    = \chi(M)\omega.
  \end{equation}
  Since \(\chi(M)\neq 0\) by \cref{item:pureModel} of \cref{theorem:elliptic},
  this proves the proposition.
\end{proof}

\begin{remark}
  The proposition also follows from \cite[Proposition 3]{smith82}.
  Here we give an alternative proof using an idea coming from string topology.
\end{remark}

Now we give a proof of \cref{proposition:shriekInclconst_EulerChar},
which completes the proof of \cref{theorem:vanishingMainTheorem}.

\begin{proof}[Proof of \cref{proposition:shriekInclconst_EulerChar}]
  Since the statement is trivial when \(\chi(M)=0\),
  we may assume \(\chi(M)\neq 0\).
  Then, by \cref{theorem:elliptic},
  the minimal Sullivan model \((\wedge V, d)\) of \(M\)
  satisfies \cref{item:pureModel}.
  Take
  \begin{math}
    \extgen \in
    \hom_{\msphere{k-1}}\left(\mdisk{k}, \msphere{k-1}\right)
  \end{math}
  by \cref{corollary:shriekJacobian}.
  Then we have
  \begin{math}
    \inclconst^*\circ(\exttohom{[\extgen]})(1) \neq 0
    \in \cohom{\wedge V} \cong \cohom{M}
  \end{math}
  by \cref{proposition:NonZeroJacobian}.
  Thus \(\myshriek=[\extgen]\) satisfies the equation
  (after multiplication of a non-zero scalar, if necessary).
\end{proof}

\section*{Acknowledgment}
This work is based on discussions with Alexander Berglund at Stockholm University,
which was financially supported by the Program for Leading Graduate School, MEXT, Japan.
I would like to express my gratitude to Katsuhiko Kuribayashi and Takahito Naito for productive discussions and valuable suggestions.
Furthermore, I would like to thank my supervisor Nariya Kawazumi
for the enormous encouragement and comments.
This work was supported by JSPS KAKENHI Grant Number 16J06349.

\makeatletter
\renewenvironment{thebibliography}[1]
{\section*{\refname\@mkboth{\refname}{\refname}}%
  \list{\@biblabel{\@arabic\c@enumiv}}%
       {\settowidth\labelwidth{\@biblabel{#1}}%
        \leftmargin\labelwidth
        \advance\leftmargin\labelsep
        \setlength\itemsep{0pt}
        \setlength\baselineskip{11pt}
        \@openbib@code
        \usecounter{enumiv}%
        \let\p@enumiv\@empty
        \renewcommand\theenumiv{\@arabic\c@enumiv}}%
  \sloppy
  \clubpenalty4000
  \@clubpenalty\clubpenalty
  \widowpenalty4000%
  \sfcode`\.\@m}
 {\def\@noitemerr
   {\@latex@warning{Empty `thebibliography' environment}}%
  \endlist}
\makeatother

\bibliographystyle{halphaabbrv}
\bibliography{references}

\begin{thebibliography}{FHT01}

\bibitem[Ber15]{berglund15}
Alexander Berglund.
\newblock Rational homotopy theory of mapping spaces via {L}ie theory for
  {$L_\infty$}-algebras.
\newblock {\em Homology Homotopy Appl.}, 17(2):343--369, 2015.

\bibitem[CG04]{cohen-godin}
Ralph~L. Cohen and V{\'e}ronique Godin.
\newblock A polarized view of string topology.
\newblock In {\em Topology, geometry and quantum field theory}, volume 308 of
  {\em London Math. Soc. Lecture Note Ser.}, pages 127--154. Cambridge Univ.
  Press, Cambridge, 2004.

\bibitem[CS99]{chas-sullivan}
Moira Chas and Dennis Sullivan.
\newblock String topology, 1999, arXiv:math/9911159.

\bibitem[FHT88]{felix-halperin-thomas88}
Yves F{\'e}lix, Stephen Halperin, and Jean-Claude Thomas.
\newblock Gorenstein spaces.
\newblock {\em Adv. in Math.}, 71(1):92--112, 1988.

\bibitem[FHT01]{felix-halperin-thomas01}
Yves F{\'e}lix, Stephen Halperin, and Jean-Claude Thomas.
\newblock {\em Rational homotopy theory}, volume 205 of {\em Graduate Texts in
  Mathematics}.
\newblock Springer-Verlag, New York, 2001.

\bibitem[FT09]{felix-thomas09}
Yves F{\'e}lix and Jean-Claude Thomas.
\newblock String topology on {G}orenstein spaces.
\newblock {\em Math. Ann.}, 345(2):417--452, 2009.

\bibitem[Men13]{menichi13}
Luc Menichi.
\newblock String topology, {E}uler class and {TNCZ} free loop fibrations, 2013,
  arXiv:1308.6684.

\bibitem[MS74]{milnor-stasheff}
John~W. Milnor and James~D. Stasheff.
\newblock {\em Characteristic classes}.
\newblock Princeton University Press, Princeton, N. J.; University of Tokyo
  Press, Tokyo, 1974.
\newblock Annals of Mathematics Studies, No. 76.

\bibitem[Nai13]{naito13}
Takahito Naito.
\newblock String operations on rational {G}orenstein spaces, 2013,
  arXiv:1301.1785.

\bibitem[Smi67]{smith67}
Larry Smith.
\newblock Homological algebra and the {E}ilenberg-{M}oore spectral sequence.
\newblock {\em Trans. Amer. Math. Soc.}, 129:58--93, 1967.

\bibitem[Smi82]{smith82}
Larry Smith.
\newblock A note on the realization of graded complete intersection algebras by
  the cohomology of a space.
\newblock {\em Quart. J. Math. Oxford Ser. (2)}, 33(131):379--384, 1982.

\bibitem[Tam10]{tamanoi}
Hirotaka Tamanoi.
\newblock Loop coproducts in string topology and triviality of higher genus
  {TQFT} operations.
\newblock {\em J. Pure Appl. Algebra}, 214(5):605--615, 2010.

\bibitem[Wak]{wakatsuki18:toappear}
Shun Wakatsuki.
\newblock Coproducts in brane topology.
\newblock to appear in Algebr. Geom. Topol., also available at
  arXiv:1802.04973.

\bibitem[Wak16]{wakatsuki16}
Shun Wakatsuki.
\newblock Description and triviality of the loop products and coproducts for
  rational {G}orenstein spaces, 2016, arXiv:1612.03563.

\end{thebibliography}

\end{document}